\newtheorem{theorem}{Theorem} 
\newtheorem{definition}{Definition}[section]
\newtheorem{remark}{Remark}[section]
\newtheorem{proposition}{Proposition}[section]
\newtheorem{lemma}{Lemma}[section]
\newtheorem{corollary}{Corollary}[section]
\numberwithin{equation}{section}
\def\namedlabel#1#2{\begingroup
    #2%
    \def\@currentlabel{#2}%
    \phantomsection\label{#1}\endgroup
}
\newcommand{\ensemblenombre}[1]{\mathbb{#1}}
 \newcommand{\E}{\ensemblenombre{E}}
\newcommand{\Z}{\ensemblenombre{Z}} 
\newcommand{\R}{\ensemblenombre{R}}
\newcommand{\Dim}{d}
\newcommand{\1}{\mathds{1}}
\newcommand{\A}{\mathcal{A}} 
\newcommand{\Intensity}{z}
\newcommand{\Leb}{\mathcal{L}^\Dim}
\newcommand{\Conf}{\omega}
\newcommand{\tConf}{\widetilde{\Conf}}
\newcommand{\ConfSpace}{\Omega}
\newcommand{\MultiConf}{\boldsymbol{\omega}}
\newcommand{\SigmaAlgebra}{\mathcal{F}}
\newcommand{\Poisson}{\pi}
\newcommand{\MultiP}{\boldsymbol{P}}
\newcommand{\MultiPartitionFunction}{\boldsymbol{Z}^{wr}}
\newcommand{\MultiPoisson}{\boldsymbol{\pi}}
\newcommand{\GibbsWR}{\mathcal{G}^{wr}}
\newcommand{\GibbsArea}{\mathcal{G}^{area}}
\newcommand{\Specification}{\mathscr{P}}
\newcommand{\PartitionFunction}{Z^{area}}
\newcommand{\Hamiltonian}{H}
\newcommand{\NccLambda}{N_{cc}^{\Lambda}}
\newcommand{\Ncc}{N_{cc}}
\newcommand{\PartitionFunctionCRCM}{Z^{crc}}
\newcommand{\IntensityThresholdPoisson}{\Intensity_c^p}
\newcommand{\Connected}[1]{\underset{#1}{\longleftrightarrow}}
\newcommand{\DisagreementCoupling}{\mathbb{P}^{\textbf{dc}}}
\newcommand{\IntensityThresholdAreaPercolation}{\widetilde{\Intensity}_c^a}
\newcommand{\IntensityThresholdAreaNonUnicitySym}{\widetilde{\Intensity}_{sym}}
\newcommand{\IntensityThresholdAreaUnPerco}{\bar{\Intensity}_r}
\newcommand{\egras}{\bold{e}}
\newcommand{\tempsarret}{\tau}
\begin{document}
\title{Sharp phase transition for the continuum Widom-Rowlinson model}
\author[1]{David Dereudre}
\author[2]{Pierre Houdebert}
\affil[1]{Laboratoire de Math\'ematiques Paul Painlev\'e\\University of Lille 1, France 
 \texttt{david.dereudre@math.univ-lille1.fr}}
\affil[2]{Aix Marseille Univ, CNRS, Centrale Marseille, I2M, Marseille, France 
 \texttt{pierre.houdebert@gmail.com}}
\maketitle

\begin{abstract}
{The Widom-Rowlinson model (or the Area-interaction model) is a Gibbs point process in $\R^d$ with the formal Hamiltonian 
defined as the volume of $\cup_{x\in\omega} B_1(x)$, where $\omega$ is a locally finite configuration of points and $B_1(x)$ denotes the unit closed ball centred at $x$. 
The model is also tuned by two other parameters: the activity $z>0$ related to the intensity of the process and the inverse temperature $\beta\ge 0$ related to the strength of the interaction. 
In the present paper we investigate the phase transition of the model in the point of view of percolation theory and the liquid-gas transition. First, considering the graph connecting points with distance smaller than $2r>0$, we show that for any $\beta\ge 0$, there exists $0<\IntensityThresholdAreaPercolation(\beta, r)<+\infty$ such that an exponential decay of connectivity at distance $n$ occurs in the subcritical phase (i.e. $z<\IntensityThresholdAreaPercolation(\beta, r)$) and a linear lower bound of the connection at infinity holds in the supercritical case (i.e. $z>\IntensityThresholdAreaPercolation(\beta, r)$). 
These results are in the spirit of recent works using the theory of randomised tree algorithms \cite{ DuminilCopin_Raoufi_Tassion_2019_Sharpness_perco_Voronoi,DuminilCopin_Raoufi_Tassion_2019_Sharpness_perco_RCM, DuminilCopin_Raoufi_Tassion_2018_Sharpness_perco_Boolean_Model}. 
Secondly we study a standard liquid-gas phase transition  related to the uniqueness/non-uniqueness of Gibbs states depending on the parameters $z,\beta$. 
Old results \cite{ruelle_1971, widom_rowlinson} claim that a non-uniqueness regime occurs for $z=\beta$ large enough and it is conjectured that the uniqueness should hold outside such an half line ($z=\beta\ge \beta_c>0$). 
We solve partially this conjecture in any dimension by showing that for $\beta$ large enough the non-uniqueness holds if and only if $z=\beta$. 
We show also that this critical value $z=\beta$ corresponds to the percolation threshold $ \IntensityThresholdAreaPercolation(\beta, r)=\beta$ for $\beta$ large enough, providing a straight connection between these two notions of phase transition.   
} 
  \bigskip

\noindent {\it Key words: Gibbs point process, DLR equations, Boolean model, continuum percolation, random cluster model, Fortuin-Kasteleyn representation, randomised tree algorithm, OSSS inequality.} 
\end{abstract}

\section{Introduction} \label{section_introduction}
The Widom-Rowlinson model (or the Area-interaction model) is a Gibbs point process in $\R^d$ with the formal Hamiltonian given by the volume of the union of balls with radii $1$ and centred at the points of the process;

 $$H(\omega)=\text{Volume}(\cup_{x\in\omega} B_1(x)).$$
 
 By changing the scale, any other value of the radius can be considered as well. Two other parameters, the activity $z>0$, related to the intensity of the process, and the inverse temperature $\beta\ge 0$, related to the strength of the interaction, parametrize the distribution of the process following the standard Boltzmann-Gibbs formalism. In the finite volume regime, the Gibbs measure is absolutely continuous with respect to the Poisson point process with the unnormalized density
\begin{align*}
 f(\omega) \sim z^{\# \omega}e^{-\beta H(\omega)},
\end{align*}
where $\# \omega$ denotes the number of points in $\omega$.
  
The popularity of this model is due to  old results  \cite{chayes_kotecky,ruelle_1971, widom_rowlinson} which prove that the Gibbs measures are not unique in the infinite volume regime for $z=\beta$ large enough. 
This beautiful result is a consequence of a representation of the model via the bi-color Widom-Rowlinson model which identifies the parameters $z$ and $\beta$ as the activities of a two-species model of particles. 
The non-uniqueness of the bi-color Widom-Rowlinson model is proved using Peierls argument, and by symmetry the phase transition is obtained for $z=\beta$ \cite{ruelle_1971}. An alternative proof via the random cluster model and a Fortuin-Kasteleyn representation has been obtain later in \cite{chayes_kotecky}. 
A generalization is proved recently in the case of random unbounded radii \cite{Houdebert_2017_percolation_CRCM}. 
As far as we know, this model and another model with a particular Kac type potential treated in \cite{LMP} are the only models, in the continuum setting without spin, for which a non-uniqueness result is proved. 
Note also that the Area-interaction have been abundantly studied by researchers from different communities in statistical physics, probability theory or spatial statistics  \cite{BVL,Dereudre_Houdebert_2019_JSP_PhaseTransitionWR,  
haggstrom_lieshout_moller,mazel_suhov_stuhl}.

In the present paper we investigate percolation and liquid-gas transition questions for this Area-interaction model. 
These two notions are different but are related and relevant to each other. 
Our first result claims that the Area-interaction model exhibits a sharp phase transition of percolation for the graph connecting points with distance smaller than $2r>0$. Precisely, for any $\beta>0$, there exists a non-trivial threshold $0<\IntensityThresholdAreaPercolation(\beta, r)<+\infty$ such that an exponential decay of connectivity at distance $n$ occurs in the subcritical phase. 
It means that for $z<\IntensityThresholdAreaPercolation(\beta, r)$ the probability (for any Area-interaction model with parameters $z$ and $\beta$) that the point $0$ is connected to the boundary of the sphere of radius $n$ centred at $0$ (i.e. $\partial B_n(0)$), decreases exponentially to zero when $n$ goes to infinity. 
By standard Palm theory arguments, that provides the exponential decay of the size of the clusters in the process itself. 
Moreover a local linear lower bound of the connection at infinity holds in the supercritical case. It means that $z>\IntensityThresholdAreaPercolation(\beta, r)$ not too large, the probability (for any Area-interaction model with parameters $z$ and $\beta$) that the point $0$ is connected to infinity is larger than $c(z-\IntensityThresholdAreaPercolation(\beta, r))$ for a fixed positive constant $c>0$. 
Again, by standard Palm theory arguments, that provides a sub-linear bound for the density of the infinite cluster in the process itself. 
The proofs of these results are in the spirit of recent works using the theory of randomised tree algorithms \cite{ DuminilCopin_Raoufi_Tassion_2019_Sharpness_perco_Voronoi,DuminilCopin_Raoufi_Tassion_2019_Sharpness_perco_RCM, DuminilCopin_Raoufi_Tassion_2018_Sharpness_perco_Boolean_Model}. 
Our main contribution is to prove an OSSS inequality for the Widom-Rowlinson model and to adapt the general strategy of randomised tree algorithms to the setting of interacting continuum particle systems. 
Complementary results show that the function  
$\beta\mapsto \IntensityThresholdAreaPercolation(\beta, r)$ 
is a non-decreasing Lipschitz map. 

Let us now  discuss the sharp liquid-gas phase transition. 
As recalled above, the Area-interaction model exhibits a non-uniqueness regime for $z=\beta$ large enough. This result is called in the literature a liquid-gas phase transition since the pressure is continuous and non-differentiable at the critical point. The derivatives before and after the critical points gives the abrupt difference between the densities of particles in the liquid and gas phases. Since these results in the seventies, it was conjectured that the phase transition is sharp which means that the non-uniqueness occurs if and only if $z=\beta$ larger than a threshold $\beta^*>0$. The results mentioned above do not give any information when $z\neq \beta$, except the standard case where $z$ or $\beta$ are small enough and for which the uniqueness of Gibbs measures is known since long time. In the present paper, we solve partially the conjecture by showing that for $\beta$ large enough (but it is not a threshold) the non-uniqueness holds if and only if $z=\beta$. For moderate values of $\beta$ (not too small and not too large) we obtain uniqueness for $z$ outside the interval $[\IntensityThresholdAreaPercolation(\beta, 1)\;;\;\IntensityThresholdAreaPercolation(., 1)^{-1}(\beta)]$ . 
See Figure \ref{figure_uniqueness} for a precise description of the phase diagram we obtain. 
Our main tool here is an extension of the disagreement percolation argument introduced in \cite{Hofer-temmel_Houdebert_2018} for continuum models. 
Actually we show that the Gibbs measures are unique provided that the wired Area-interaction model does not percolate for $r=1$. 
It means that the Gibbs measures are unique as soon as $z<\IntensityThresholdAreaPercolation(\beta, 1)$ and also for $z>\IntensityThresholdAreaPercolation(., 1)^{-1}(\beta)$ by duality. 
A last result claims that $\IntensityThresholdAreaPercolation(\beta, 1)=\IntensityThresholdAreaPercolation(., 1)^{-1}(\beta)=\beta$ for $\beta$ large enough reducing the interval to the single point $\{\beta\}.$ 

Let us note that the proofs of sharp phase transition, in the settings of percolation theory and liquid-gas transition are quite independent although similar tools and notions are used in both. 
Let us mention also  a similar sharp liquid-gas phase transition obtained for the 2D Widom-Rowlinson model on $\Z^2$ (Theorem 1.1 \cite{HT2004}).
The proof is based on large circuit arguments and depends strongly on the lattice structure. 
It can not be adapted to the continuum setting developed here and moreover it involves only the dimension $d=2$.
  
The paper is organized as follows. In Section \ref{section_preliminaries} we introduce the Area-intercation model and the main required tools (stochastic domination, bicolor Widom-Rowlinson representation, duality). In Section \ref{section_results}, the results are presented and the proofs related to the sharp phase transition of percolation (respectively the liquid-gas phase transition) are given in Section \ref{Section_Perco} (respectively in Section \ref{Section_proof_th_unicite}). An annex Section contains some technical lemmas. 

\section{Preliminaries} \label{section_preliminaries}
\subsection{Space}
Let us consider the state space $\R ^\Dim $ with $\Dim \geq 2$ being the dimension.
Let $\ConfSpace$ be the set of locally finite configurations $\Conf$ on $\R ^\Dim$.
This means that $\#(\Conf \cap \Lambda)< \infty$ for every bounded Borel set $\Lambda$ of $\R^\Dim$, with $\#\Conf$ being the cardinal of the configuration $\Conf$.
We write $\Conf_{\Lambda}$ as a shorthand for $\Conf \cap \Lambda$.
The configuration space is embedded with the usual $\sigma$-algebra $\SigmaAlgebra$ generated by the counting variables.
To a configuration $\Conf \in \ConfSpace$ we associate the germ-grain structure
\begin{align*}
B_r(\Conf) :=
\underset{x \in \Conf}{\bigcup} B_r(x),
\end{align*}
where $B_r(x)$ is the closed ball centred at $x$ with radius $r>0$.

\subsection{Poisson point processes}
Let $\Poisson^{\Intensity}$ be the distribution on $\ConfSpace$ of the homogeneous Poisson point process with intensity $\Intensity >0$. 
Recall that it means 
\begin{itemize}
\item for every bounded Borel set $\Lambda$, the distribution of the number of points in $\Lambda$ under $\Poisson^\Intensity$ is a Poisson distribution of mean $\Intensity \Leb(\Lambda)$, where $\Leb$ stands for the usual $\Dim$-dimensional Lebesgue measure;
\item given the number of points in a bounded $\Lambda$, the points are independent and uniformly distributed in $\Lambda$.
\end{itemize}
We refer to \cite{daley_vere_jones} for details on Poisson point processes.

For $\Lambda \subseteq \R^d$ bounded, we denote by $\Poisson^{\Intensity}_\Lambda$  the restriction of  $\Poisson^{\Intensity}$  on $\Lambda$.
For simplicity the special case of the Poisson point process of unit intensity (i.e. $\Intensity = 1$) is denoted by $\Poisson$, 
and its restriction by $\Poisson_{\Lambda}$.

\subsection{Area-interaction measures}
The area-interaction measures (or the one-color Widom-Rowlinson models) are defined through the standard Gibbs Dobrushin-Lanford-Ruelle formalism prescribing the conditional probabilities.
For a bounded $\Lambda \subseteq \R^\Dim$, we define the \emph{$\Lambda$-Hamiltonian}
\begin{align*}
\Hamiltonian_{\Lambda} (\Conf)
:=
\Leb ( \ B_1(\Conf_{\Lambda}) \setminus B_1(\Conf_{\Lambda^c}) \ ).
\end{align*}
The \emph{area specification} on a bounded $\Lambda \subseteq \R^\Dim$ with boundary condition $\Conf_{\Lambda^c}$ is defined by
\begin{align*}
\Specification_{\Lambda, \Conf_{\Lambda^c}}^{z,\beta}(d \Conf'_\Lambda)
: =
\frac{z^{\#\Conf'_{\Lambda}} \ e^{-\beta \Hamiltonian_{\Lambda}
(\Conf'_{\Lambda} \cup \Conf_{\Lambda^c}) }}
{\PartitionFunction (z, \beta,\Lambda,\Conf_{\Lambda^c})} 
\Poisson_{\Lambda}(d\Conf'_{\Lambda})
\end{align*}
with the standard \emph{partition function}
\begin{align*}
\PartitionFunction (z,\beta ,\Lambda,\Conf_{\Lambda^c})
: =
\int_{\ConfSpace}
z^{\# \Conf'_{\Lambda}} \ e^{-\beta \Hamiltonian_{\Lambda}
(\Conf'_{\Lambda} \cup \Conf_{\Lambda^c}) }
\Poisson_{\Lambda}(d\Conf'_{\Lambda})
\end{align*}
which is always non-degenerate (i.e. $0<\PartitionFunction (z,\beta ,\Lambda,\Conf_{\Lambda^c})<+\infty$).
\begin{remark} \label{remark_additivity_hamiltonian}
There are several possible forms for the Hamiltonian $\Hamiltonian_{\Lambda}$, all of which defining the same specification.
The nice property about our definition of $\Hamiltonian_{\Lambda}$ is the additivity, in the sense that $\Hamiltonian_{\Lambda}(\Conf)$ can be seen as the sum of the contribution of each points, with respect to the already considered ones.

The additivity of the Hamiltonian is one way of ensuring the \emph{compatibility} of the Gibbsian specification, i.e for all $\Lambda \subseteq \Delta \subset R^\Dim$ bounded subsets, for all measurable bounded functions $f$ all boundary condition $\hat{\Conf}$,
\begin{align}
\label{eq_compatibility_specification}
\int_{\ConfSpace} f \ d \Specification^{\Intensity, \beta}_{\Delta,\hat{\Conf}_{\Delta^c}}
=
\int_{\ConfSpace} \int_{\ConfSpace}
f(\Conf'_{\Lambda} \cup \Conf_{\Delta \setminus \Lambda} )
\Specification^{z, \beta}_{\Lambda, 
\Conf_{\Delta \setminus \Lambda} \cup \hat{\Conf}_{\Delta^c}}
(d \Conf'_\Lambda)
\Specification^{\Intensity, \beta}_{\Delta,\hat{\Conf}_{\Delta^c}}(d\Conf_\Delta).
\end{align}

\end{remark}
\begin{definition}
\label{def_area_model}
A probability measure $P$ on $\ConfSpace$ is an \emph{area-interaction measure} of activity $z$ and inverse temperature $\beta$,  written 
$P \in \GibbsArea_{z, \beta}$, if for every bounded Borel set $\Lambda \subseteq \R^d$ and  every bounded measurable function $f$,
\begin{align}
\label{eq_dlr_area_model}
\int_{\ConfSpace} f \ d P 
=
\int_{\ConfSpace} \int_{\ConfSpace}
f(\Conf'_{\Lambda} \cup \Conf_{\Lambda^c} )
\Specification^{z, \beta}_{\Lambda, \Conf_{\Lambda^c}}(d \Conf'_\Lambda)
P (d \Conf ).
\end{align}
The equations \eqref{eq_dlr_area_model}, for all bounded $\Lambda$, are called \emph{DLR equations}, after Dobrushin, Lanford and Ruelle.
Those equations prescribe the conditional probabilities of a Gibbs measure.
\end{definition}
\subsection{Stochastic domination}
Let us discuss  stochastic domination, which is going to be a key element of several proofs of the paper.
Recall that an event $E \in \SigmaAlgebra$ is said \emph{increasing} if for $\Conf' \in E$ and $\Conf \supseteq \Conf'$, we have $\Conf \in E$.
 This definition naturally extend to define \emph{increasing functions} 
$f:\ConfSpace \to \R$.
Finally if $P$ and $P'$ are two probability measures on $\ConfSpace$, the measure $P$ is said to \emph{stochastically dominate} the measure $P'$, written $P' \preceq P$, if $P'(E) \leq P(E)$ for every increasing event $E \in \SigmaAlgebra$.

The following proposition is a direct application of the classical Georgii and K\"uneth stochastic domination result
\cite[Theorem 1.1]{georgii_kuneth} and gives standard stochastic dominations.
\begin{proposition}
\label{propo_dom_sto_poisson_area}
For every bounded $\Lambda \subseteq \R^d$,
\begin{itemize}
\item for every boundary condition $\Conf_{\Lambda^c}$ and every $\Intensity, \beta$ we have
\begin{align*}
\Poisson^{\Intensity e^{-\beta v_\Dim}}_{\Lambda}
\preceq
\Specification_{\Lambda, \Conf_{\Lambda^c}}^{\Intensity, \beta }(d \Conf'_\Lambda)
\preceq 
\Poisson^{\Intensity}_{\Lambda},
\end{align*}
where $v_\Dim$ is the volume of the unit ball in dimension $\Dim$.

This implies in particular that every $P \in \GibbsArea_{\Intensity, \beta}$ satisfies
\begin{align*}
\Poisson^{\Intensity e^{-\beta v_\Dim}}
\preceq
P  
\preceq 
\Poisson^{\Intensity}.
\end{align*}
\item For every boundary conditions
$\Conf^{1}_{\Lambda^c} \subseteq \Conf^{2}_{\Lambda^c}$, every $\Intensity_1 \leq \Intensity_2$ and every $\beta_1 \geq \beta_2$ we have
\begin{align*}
\Specification_{\Lambda, \Conf^1_{\Lambda^c}}^{\Intensity_1, \beta_1 }(d \Conf'_\Lambda)
\preceq 
\Specification_{\Lambda, \Conf^2_{\Lambda^c}}^{\Intensity_2, \beta_2}(d \Conf'_\Lambda) .
\end{align*}
\end{itemize}
\end{proposition}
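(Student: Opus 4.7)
My plan is to reduce everything to a single computation of the Papangelou (conditional) intensity of the area specification and then invoke the Georgii--K\"uneth comparison theorem cited in the excerpt.

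First I would observe that the additivity of $H_\Lambda$ mentioned in Remark~\ref{remark_additivity_hamiltonian} gives an explicit formula for the Papangelou intensity of $\Specification_{\Lambda,\Conf_{\Lambda^c}}^{z,\beta}$: for $x\in\Lambda$ and a configuration $\Conf$ (containing the boundary condition),
\begin{align*}
\lambda_{z,\beta}(x,\Conf)
= z\, \exp\!\bigl(-\beta\, \Leb(B_1(x)\setminus B_1(\Conf))\bigr).
\end{align*}
Two elementary facts about this quantity are the engine of the whole proof. (a) Since $0\le \Leb(B_1(x)\setminus B_1(\Conf))\le v_\Dim$, we have the pointwise pinching $z e^{-\beta v_\Dim}\le \lambda_{z,\beta}(x,\Conf)\le z$. (b) If $\Conf\subseteq \Conf'$, then $B_1(\Conf)\subseteq B_1(\Conf')$, so $\Leb(B_1(x)\setminus B_1(\Conf))\ge \Leb(B_1(x)\setminus B_1(\Conf'))$, and hence $\lambda_{z,\beta}(x,\Conf)\le \lambda_{z,\beta}(x,\Conf')$. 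In other words the model is attractive.

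Next I would apply \cite[Theorem 1.1]{georgii_kuneth}, whose hypothesis, for two specifications with Papangelou intensities $\lambda_1,\lambda_2$, is $\lambda_1(x,\Conf)\le \lambda_2(x,\Conf')$ whenever $\Conf\subseteq\Conf'$. For the first item, taking $\lambda_2\equiv z$ (the Poisson intensity $\Poisson^z_\Lambda$) and using (a) yields $\Specification_{\Lambda,\Conf_{\Lambda^c}}^{z,\beta}\preceq \Poisson^z_\Lambda$; taking $\lambda_1\equiv ze^{-\beta v_\Dim}$ yields the opposite domination. For the infinite-volume version, I would note that the DLR equation \eqref{eq_dlr_area_model} writes $P$ as a mixture of the specifications $\Specification_{\Lambda,\Conf_{\Lambda^c}}^{z,\beta}$, and stochastic domination is preserved under mixtures, so the same bounds transfer to any $P\in\GibbsArea_{z,\beta}$ on each bounded $\Lambda$ and hence globally by a standard $\sigma$-algebra generation argument.

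For the second item, fix $\Conf^1_{\Lambda^c}\subseteq \Conf^2_{\Lambda^c}$, $z_1\le z_2$, $\beta_1\ge \beta_2$. Given $\Conf\subseteq\Conf'$ inside $\Lambda$, I would combine the attractivity (b) applied to $\Conf\cup\Conf^1_{\Lambda^c}\subseteq \Conf'\cup\Conf^2_{\Lambda^c}$ with the elementary inequality $z_1 e^{-\beta_1 V}\le z_2 e^{-\beta_2 V}$ valid for every $V\in[0,v_\Dim]$ under the assumed orderings of $z$ and $\beta$. This yields $\lambda_{z_1,\beta_1}(x,\Conf\cup\Conf^1_{\Lambda^c})\le \lambda_{z_2,\beta_2}(x,\Conf'\cup\Conf^2_{\Lambda^c})$, which is exactly the Georgii--K\"uneth hypothesis for the desired comparison of the two specifications.

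The only delicate point is a bookkeeping one: the Georgii--K\"uneth theorem compares specifications on the same bounded window, so I have to make sure the boundary conditions enter only through the argument of the Papangelou intensity and not as an additional parameter of the theorem; since our Papangelou intensity depends on $\Conf_{\Lambda^c}$ only through the combined configuration $\Conf'_\Lambda\cup \Conf_{\Lambda^c}$, this is immediate, and I do not expect any substantive obstacle beyond this verification.
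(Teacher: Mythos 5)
Your proposal is correct and follows exactly the route the paper intends: the paper gives no written proof, simply declaring the proposition "a direct application of \cite[Theorem 1.1]{georgii_kuneth}", and the Papangelou-intensity computation you supply (the pinching $z e^{-\beta v_\Dim}\le \lambda_{z,\beta}\le z$ and the attractivity in $\Conf$) is precisely the verification the authors themselves carry out later, in \eqref{papangelouAREA} and the surrounding discussion of Section \ref{section_preuve_zcbeta_ligget}. Nothing further is needed.
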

\subsection{Free and wired measures}
Two particular area-interaction measures are constructed as follows.
Consider the increasing sequence $\Lambda_n := ]-n,n]^\Dim$ and consider the \emph{free and wired area-interaction measures}  on the bounded box $\Lambda_n$, denoted by $P^{\Intensity, \beta}_{n,free}$ and $P^{\Intensity,\beta}_{n,wired}$ and defined as
\begin{align*}
P^{\Intensity, \beta}_{n,free} \left(d \Conf'_{\Lambda_n}\right) 
&:= 
\frac{z^{\# \Conf'_{\Lambda_n}} \ 
e^{-\beta \Leb\left(\ B_1 \left(\Conf'_{\Lambda_n} \right) \ \right) }}
{\PartitionFunction(z, \beta,n,free)}  \
\Poisson_{\Lambda_n}(d\Conf'_{\Lambda_n});
\\ 
P^{\Intensity, \beta}_{n,wired} \left(d \Conf'_{\Lambda_n} \right) 
& := 
\frac{z^{\# \Conf'_{\Lambda_n}} \ 
e^{-\beta \Leb \left(
\ B_1\left(\Conf'_{\Lambda_n} \right) \cap \Lambda_{n-1} \ \right) }}
{\PartitionFunction(z, \beta,n,wired)} \
\Poisson_{\Lambda_n}(d\Conf'_{\Lambda_n});
\end{align*}
where $\PartitionFunction(z, \beta,n,free)$ and
$\PartitionFunction(z, \beta,n,wired)$ are the normalising constants.
The measure $P^{\Intensity, \beta}_{n,free} $ is simply 
$\Specification_{\Lambda_n, \emptyset}^{\Intensity, \beta}$, whereas 
$P^{\Intensity, \beta}_{n,wired}$ is the limiting case where the boundary condition would be filled with  points on the boundary of $\Lambda_n$.

From \cite[Theorem 1.1]{georgii_kuneth} we get the following proposition.

\begin{proposition}
\label{propo_dom_sto_free_wired_boite}
For every $n$ and every $\Intensity, \beta$ we have
\begin{itemize}
\item $P^{\Intensity, \beta}_{n,free} \preceq P^{\Intensity, \beta}_{n+1,free}$;
\item ${P^{\Intensity, \beta}_{n+1,wired}}_{|\Lambda_n} \preceq P^{\Intensity, \beta}_{n,wired}$,
\end{itemize}
where ${P^{\Intensity, \beta}_{n+1,wired}}_{|\Lambda_n}$ stands for the measure 
$P^{\Intensity, \beta}_{n+1,wired}$ restricted to $\Lambda_n$.
\end{proposition}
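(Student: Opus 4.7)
The plan is to derive both dominations by disintegrating the larger-box measure along its restriction $\tau := \omega \cap (\Lambda_{n+1}\setminus\Lambda_n)$ in the annulus, identifying the resulting conditional law of $\omega_{\Lambda_n}$, and then comparing it with the smaller-box measure uniformly in $\tau$ using either the second bullet of Proposition~\ref{propo_dom_sto_poisson_area} or the Georgii--K\"uneth criterion \cite{georgii_kuneth}. A final averaging over the distribution of $\tau$ delivers the stated inequalities.

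For the first item, the density of $P^{z,\beta}_{n+1,free}$ together with the decomposition $\Leb(B_1(\omega'_{\Lambda_n}\cup\tau)) = \Leb(B_1(\tau)) + \Leb(B_1(\omega'_{\Lambda_n})\setminus B_1(\tau))$ shows that the conditional distribution of $\omega'_{\Lambda_n}$ given $\tau$ is exactly the specification $\Specification^{z,\beta}_{\Lambda_n,\tau}$. Since $\emptyset \subseteq \tau$, the second bullet of Proposition~\ref{propo_dom_sto_poisson_area} gives $P^{z,\beta}_{n,free} = \Specification^{z,\beta}_{\Lambda_n,\emptyset} \preceq \Specification^{z,\beta}_{\Lambda_n,\tau}$ for every $\tau$. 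Averaging over the distribution of $\tau$ yields $P^{z,\beta}_{n,free} \preceq {P^{z,\beta}_{n+1,free}}_{|\Lambda_n}$, and since any increasing event can only grow when extra points in $\Lambda_{n+1}\setminus\Lambda_n$ are allowed, one concludes $P^{z,\beta}_{n,free} \preceq P^{z,\beta}_{n+1,free}$.

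For the second item, the same disintegration shows that the conditional law of $\omega'_{\Lambda_n}$ given $\tau$ under $P^{z,\beta}_{n+1,wired}$ has Papangelou intensity
\[
\tilde\lambda(x,\omega\,|\,\tau) = z\exp\bigl(-\beta\,\Leb\bigl((B_1(x)\setminus B_1(\omega))\cap\Lambda_n\setminus B_1(\tau)\bigr)\bigr)
\]
at $x\in\Lambda_n$, whereas $P^{z,\beta}_{n,wired}$ has Papangelou intensity $\lambda_n(x,\omega) = z\exp(-\beta\,\Leb((B_1(x)\setminus B_1(\omega))\cap\Lambda_{n-1}))$. The crucial geometric observation is that for $y\in\Lambda_{n-1}$ and $t\in\tau\subseteq\Lambda_{n+1}\setminus\Lambda_n$ one has $|y-t|_\infty>1$, and hence $|y-t|_2>1$, so that $\Lambda_{n-1}\cap B_1(\tau) = \emptyset$ and therefore $\Lambda_{n-1}\subseteq\Lambda_n\setminus B_1(\tau)$. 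This immediately gives $\tilde\lambda(x,\omega_1\,|\,\tau) \le \lambda_n(x,\omega_2)$ for every $\omega_1\subseteq\omega_2$ and $x\in\Lambda_n$, so that Georgii--K\"uneth \cite{georgii_kuneth} yields that the conditional law of $\omega'_{\Lambda_n}$ given $\tau$ is stochastically dominated by $P^{z,\beta}_{n,wired}$, uniformly in $\tau$. Averaging over $\tau$ closes the proof of ${P^{z,\beta}_{n+1,wired}}_{|\Lambda_n} \preceq P^{z,\beta}_{n,wired}$.

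The main technical point is the wired case: one must correctly compute the conditional Papangelou intensity (the restriction $\cap\Lambda_n$ in the exponent is what breaks the naive reduction) and then notice that the choice of writing the wired Hamiltonian as an integral over $\Lambda_{n-1}$, rather than $\Lambda_n$, is precisely what guarantees a unit buffer between the integration region and the annulus hosting $\tau$. Without this buffer, the two Papangelou intensities could not be compared in the required direction, and the monotonicity of the sequence would fail.
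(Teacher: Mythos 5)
Your proof is correct and follows the paper's intended route: the paper proves this proposition simply by invoking the Georgii--K\"uneth comparison of Papangelou intensities (\cite[Theorem 1.1]{georgii_kuneth}), which is exactly the criterion you apply. Your additional disintegration over the annulus configuration $\tau$, the identification of the conditional law as $\Specification^{z,\beta}_{\Lambda_n,\tau}$ (resp.\ its wired analogue), and the observation that $\Lambda_{n-1}\cap B_1(\tau)=\emptyset$ correctly supply the details the paper leaves implicit, in particular why the one-cube buffer in the wired Hamiltonian is what makes the comparison go through.
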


From Proposition \ref{propo_dom_sto_free_wired_boite} and using Carathéodory's extension theorem we get the existence of $P^{\Intensity, \beta}_{free}$ and $P^{\Intensity, \beta}_{wired}$.
Those probability measures are, thanks to \cite[Theorem 11.1.VII]{daley_vere_jones}, weak limits of the sequences 
$\left(P^{\Intensity, \beta}_{n,free}\right)_n$ and 
$\left(P^{\Intensity, \beta}_{n,wired}\right)_n$.
They are also stationary (see \cite{chayes_kotecky} for details).

\begin{proposition}
\label{propo_domination_sandwich}
For every $\Intensity_1 \leq \Intensity_2$ and $\beta_1 \geq \beta_2$,
\begin{itemize}
\item $P^{\Intensity_1, \beta_1}_{free} \in \GibbsArea_{\Intensity_1,\beta_1}$
and $P^{\Intensity_1, \beta_1}_{wired} \in \GibbsArea_{\Intensity_1,\beta_1}$;
\item $P^{\Intensity_1, \beta_1}_{free} \preceq P^{\Intensity_2, \beta_2}_{free}$
and $P^{\Intensity_1, \beta_1}_{wired} \preceq P^{\Intensity_2, \beta_2}_{wired}$;
\item 
$P^{\Intensity_1, \beta_1}_{free} \preceq P 
\preceq P^{\Intensity_1, \beta_1}_{wired}$ \quad for all 
$P \in \GibbsArea_{\Intensity_1,\beta_1}$.
\end{itemize}
\end{proposition}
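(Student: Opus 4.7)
The plan is to deduce all three items from the monotonicity of specifications (Proposition \ref{propo_dom_sto_poisson_area}) together with the monotone convergence of the finite-volume free and wired measures (Proposition \ref{propo_dom_sto_free_wired_boite}), combined with a routine passage to the limit $n\to\infty$. Throughout, I will use that stochastic domination of measures on $\ConfSpace$ passes to weak limits on the cylinder $\sigma$-algebra $\SigmaAlgebra$.

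For item 2, the free inequality is direct because $P^{z,\beta}_{n,free}=\Specification^{z,\beta}_{\Lambda_n,\emptyset}$ and Proposition \ref{propo_dom_sto_poisson_area} immediately yields $P^{z_1,\beta_1}_{n,free}\preceq P^{z_2,\beta_2}_{n,free}$, which persists in the limit. For the wired side I would read off from the density of $P^{z,\beta}_{n,wired}$ with respect to $\Poisson_{\Lambda_n}$ that the Holley-type criterion of \cite[Theorem 1.1]{georgii_kuneth} applies when $z$ increases and $\beta$ decreases, giving $P^{z_1,\beta_1}_{n,wired}\preceq P^{z_2,\beta_2}_{n,wired}$, and take $n\to\infty$ as before.

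For item 3, fix $P\in\GibbsArea_{z_1,\beta_1}$ and a bounded $\Lambda\subset\Lambda_n$. The DLR equation on the box $\Lambda_n$ writes $P_{|\Lambda_n}=\int \Specification^{z_1,\beta_1}_{\Lambda_n,\Conf_{\Lambda_n^c}}\,P(d\Conf)$ as a mixture, and Proposition \ref{propo_dom_sto_poisson_area} gives, pointwise in the boundary condition,
\begin{align*}
\Specification^{z_1,\beta_1}_{\Lambda_n,\emptyset}\preceq \Specification^{z_1,\beta_1}_{\Lambda_n,\Conf_{\Lambda_n^c}}\preceq P^{z_1,\beta_1}_{n,wired}.
\end{align*}
The first inequality is Proposition \ref{propo_dom_sto_poisson_area}; the second is again Holley applied to the densities, using that $\Hamiltonian_{\Lambda_n}(\Conf'\cup\Conf_{\Lambda_n^c})\ge \Leb(B_1(\Conf'_{\Lambda_n})\cap\Lambda_{n-1})$ because adding points outside $\Lambda_n$ only decreases the newly covered area. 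Integrating against $P$ and passing to the limit $n\to\infty$ yields the sandwich $P^{z_1,\beta_1}_{free}\preceq P\preceq P^{z_1,\beta_1}_{wired}$.

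It remains to verify the DLR property (item 1). Take $\Lambda$ bounded and $f$ a bounded local function, and choose $n$ so large that the support of $f$ together with $\Lambda$ lies inside $\Lambda_{n-1}$. The compatibility identity \eqref{eq_compatibility_specification} applied to the pair $(\Lambda,\Lambda_n)$ gives
\begin{align*}
\int f\,dP^{z_1,\beta_1}_{n,free}=\int\!\!\int f(\Conf'_\Lambda\cup\Conf_{\Lambda^c})\,\Specification^{z_1,\beta_1}_{\Lambda,\Conf_{\Lambda^c}}(d\Conf'_\Lambda)\,P^{z_1,\beta_1}_{n,free}(d\Conf),
\end{align*}
and since $\Hamiltonian_\Lambda$ is of range $2$ the inner integral is a bounded local function of $\Conf$. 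Weak convergence of $P^{z_1,\beta_1}_{n,free}$ then lets one pass to the limit on both sides and obtain \eqref{eq_dlr_area_model} for $P^{z_1,\beta_1}_{free}$. The wired case is entirely analogous, provided one first verifies that the wired density on $\Lambda_n$ is compatible with the specification $\Specification^{z_1,\beta_1}_{\Lambda,\cdot}$ on any inner box $\Lambda\subset\Lambda_{n-1}$. This last point is the step I expect to require the most care: because $\Leb(B_1(\Conf)\cap\Lambda_{n-1})$ is not literally $\Hamiltonian_{\Lambda_n}(\Conf\cup\widehat\Conf)$ for an actual boundary $\widehat\Conf$, one must check by a direct computation, using the additivity from Remark \ref{remark_additivity_hamiltonian}, that the wired Hamiltonian on $\Lambda_n$ decomposes as $\Hamiltonian_\Lambda(\cdot\cup\Conf_{\Lambda^c\cap\Lambda_n})$ plus a term independent of the points in $\Lambda$, so that the ratio defining the specification cancels correctly.
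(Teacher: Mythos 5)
Your argument is correct and is essentially the standard one the paper implicitly invokes: the paper states this proposition without proof, treating it as a routine consequence of Propositions \ref{propo_dom_sto_poisson_area} and \ref{propo_dom_sto_free_wired_boite} together with the references to \cite{georgii_kuneth} and \cite{chayes_kotecky}. You correctly identify the one genuinely non-automatic point, namely that the wired density is not a specification with an actual boundary configuration, so compatibility on inner boxes must be checked by hand via the additivity of $\Hamiltonian_\Lambda$ and the fact that $B_1(\Conf'_\Lambda)\subseteq\Lambda_{n-1}$ for $n$ large; the only remaining (standard) care point is that passing the DLR equation and the dominations to the limit uses that the kernels $\Conf\mapsto\Specification^{z,\beta}_{\Lambda,\Conf_{\Lambda^c}}(f)$ are local with range $2$, so that convergence on local events suffices.
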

As a consequence of the first item of Proposition \ref{propo_domination_sandwich}, we know that the set of area-interaction measures $\GibbsArea_{\Intensity,\beta}$ is never empty.
From the last item of Proposition \ref{propo_domination_sandwich}, the question of uniqueness of the area-interaction measure translates to the question of the equality of measures
$P^{\Intensity_1, \beta_1}_{free} = P^{\Intensity_1, \beta_1}_{wired}$.
The next Proposition is stating that this equality happens for a lot of parameters $(\Intensity, \beta)$.
\begin{proposition}
\label{proposition_non_unicite_au_plus_denombrable}
For all $\beta>0$, the set
$ \{ 
\Intensity >0 , P^{\Intensity, \beta}_{free} 
\not =  P^{\Intensity, \beta}_{wired}
\} $
is at most countable.
\end{proposition}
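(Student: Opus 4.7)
My plan is to deduce the proposition from the convexity of the pressure in $\log z$, combined with the stochastic sandwich of Proposition \ref{propo_domination_sandwich}. This is the standard thermodynamic strategy for Gibbs point processes: equality of the free and wired measures can fail only at those activities $z$ where the pressure is not differentiable in $\log z$, and a convex function of a real variable has only countably many such points.

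Set $h:=\log z$ and introduce the finite-volume log-partition function $F_n(h):=\log \PartitionFunction(e^h,\beta,n,free)$. A direct computation gives
\[
F_n''(h)=\mathrm{Var}_{P^{e^h,\beta}_{n,free}}\!\left(\#\Conf_{\Lambda_n}\right)\ge 0,
\]
so $F_n$ is convex in $h$. Since the area interaction is bounded and of finite range, the thermodynamic limit $p(h,\beta):=\lim_{n}F_n(h)/\Leb(\Lambda_n)$ exists, coincides with the analogous limit under wired boundary condition, and is convex in $h$ as a pointwise limit of convex functions. By interchanging the thermodynamic limit with one-sided derivatives, one identifies
\[
\partial_h^- p(h,\beta)=E_{P^{e^h,\beta}_{free}}\!\left[\#\Conf_{[0,1]^\Dim}\right] \quad \text{and} \quad \partial_h^+ p(h,\beta)=E_{P^{e^h,\beta}_{wired}}\!\left[\#\Conf_{[0,1]^\Dim}\right].
\]
This exchange is made possible by Proposition \ref{propo_dom_sto_free_wired_boite} (which provides monotone convergence of $P^{z,\beta}_{n,free}$ and $P^{z,\beta}_{n,wired}$ to $P^{z,\beta}_{free}$ and $P^{z,\beta}_{wired}$) and by the monotonicity of $F_n'(h)$ in $h$. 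At any $h$ where $p(\cdot,\beta)$ is differentiable, the two intensities therefore coincide.

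It remains to show that equality of the intensities forces $P^{z,\beta}_{free}=P^{z,\beta}_{wired}$. By Strassen's theorem applied to the domination $P^{z,\beta}_{free}\preceq P^{z,\beta}_{wired}$ of Proposition \ref{propo_domination_sandwich}, there exists a coupling $(\Conf,\Conf')$ with $\Conf\sim P^{z,\beta}_{free}$, $\Conf'\sim P^{z,\beta}_{wired}$, and $\Conf\subseteq\Conf'$ almost surely. Equal intensities on $[0,1]^\Dim$ give $E[\#\Conf'_{[0,1]^\Dim}-\#\Conf_{[0,1]^\Dim}]=0$, and the inclusion $\Conf\subseteq\Conf'$ then forces $\Conf_{[0,1]^\Dim}=\Conf'_{[0,1]^\Dim}$ a.s.; by stationarity and exhaustion of $\R^\Dim$ by translates of $[0,1]^\Dim$, we conclude $\Conf=\Conf'$ a.s., and hence $P^{z,\beta}_{free}=P^{z,\beta}_{wired}$. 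Since a convex function on $\R$ admits at most countably many points of non-differentiability, the proposition follows. The main technical point is the second step, namely the rigorous interchange of limits that identifies the one-sided derivatives of $p$ with the free and wired intensities; the monotonicity structure supplied by Propositions \ref{propo_dom_sto_poisson_area} and \ref{propo_dom_sto_free_wired_boite} is precisely what makes this exchange possible.
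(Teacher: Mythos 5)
Your proposal is correct and follows essentially the same route as the paper, which itself only sketches the argument by appealing to ``standard differentiability/convexity arguments of the pressure function'' (citing the Ising and lattice Widom--Rowlinson analogues) and omits the details. Your write-up supplies exactly those details: convexity of the pressure in $\log z$, identification of the one-sided derivatives with the free and wired intensities via the monotone sandwich, and the Strassen coupling step showing that equal intensities together with stochastic domination force equality of the measures.
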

The proof of this proposition is related to standard differentiability/convexity arguments of the pressure function. 
See for instance Theorem 3.34 in \cite{Velenik} for a proof in the case of Ising model or Theorem 4.2 in \cite{HT2004} for the lattice Widom-Rowlinson model. A direct adaptation for the continuum area-interaction measure is achievable and omitted here for brevity.

\subsection{Bicolor Widom-Rowlinson representation of area-interaction measures}
The bicolor Widom-Rowlinson model is simply defined as the reunion of two Poisson Boolean models (with deterministic radii equal to 0.5) conditioned on a hard-core non overlapping condition between the two Boolean models.
A formal definition using standard DLR formalism is given below.


\begin{definition}
\label{defi_wr}
Let $\MultiConf:= (\Conf^1,\Conf^2)$ denotes a couple of configurations.
Let $\A:=\{ (\Conf^1,\Conf^2) \in \ConfSpace^2, 
B_{1/2}(\Conf^1)\cap B_{1/2}(\Conf^2) = \emptyset \}$ be the event of authorised (couple of) configurations.
Let $\MultiPoisson^{\Intensity_1,\Intensity_2}:= 
\Poisson^{\Intensity_1} \otimes \Poisson^{\Intensity_2}$.

Then a probability measure $\MultiP$ on $\ConfSpace^2$ is a \emph{Widom-Rowlinson measure} with parameters $\Intensity_1,\Intensity_2$, written
$\MultiP \in \GibbsWR_{\Intensity_1,\Intensity_2}$, if $\MultiP(\A)=1$ and if for every bounded $\Lambda \subseteq \R^\Dim$ and every bounded measurable function $f$,
\begin{align}
\label{eq_dlr_WR}
\int_{\ConfSpace^2} f  d \MultiP 
=
\int_{\ConfSpace^2} \int_{\ConfSpace^2}
f(\MultiConf'_{\Lambda} \cup \MultiConf_{\Lambda^c} )
\frac{\1_{\A}( \MultiConf'_{\Lambda} \cup \MultiConf_{\Lambda^c})}
{\MultiPartitionFunction 
(\Lambda, \Intensity_1,\Intensity_2, \MultiConf_{\Lambda^c})}
\MultiPoisson_{\Lambda}^{\Intensity_1,\Intensity_2} (d \MultiConf'_{\Lambda})
\MultiP (d \MultiConf),
\end{align}
with $\MultiPartitionFunction 
(\Lambda, \Intensity_1,\Intensity_2, \MultiConf_{\Lambda^c})$ being the standard partition function associated to the Widom-Rowlinson interaction.
\end{definition}

The following proposition is the standard relation between one-color and bi-color Widom-Rowlinson models.

\begin{proposition}
\label{propo_representation_wr_area}
${}$
\begin{itemize}
\item Let $\MultiP \in \GibbsWR_{\Intensity_1,\Intensity_2}$.
Then the first marginal of $\MultiP$ is an area-interaction measure of activity $\Intensity = \Intensity_1$ and inverse temperature $\beta= \Intensity_2$.
The analogue is true for the second marginal.
\item Let $P \in \GibbsArea_{\Intensity,\beta}$.
Then the measure $\MultiP := 
\Poisson^{\beta}_{\R^\Dim \setminus B_1(\Conf^1)}(d\Conf^2) P(d\Conf^1)$ is a Widom-Rowlinson measure: 
$\MultiP \in \GibbsWR (\Intensity_1=\Intensity, \Intensity_2=\beta)$.
\end{itemize}
As a consequence of these two points, the sets $\GibbsArea_{\Intensity,\beta}$ and
$\GibbsWR (\Intensity_1=\Intensity, \Intensity_2=\beta)$ are in bijection. 
By symmetry of $\GibbsWR (\Intensity_1, \Intensity_2)$ with respect to $\Intensity_1$, $\Intensity_2$, the sets $\GibbsArea_{\Intensity,\beta}$ and $\GibbsArea_{\beta, \Intensity}$ are in bijection as well; 
this property is called the duality property.
Only this duality property will be used later, in the proof of Theorem \ref{theo_unicite_avant_threshold}.
\end{proposition}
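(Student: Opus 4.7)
My plan is to verify the DLR equations for both directions via a Bayes/Radon--Nikodym computation in every bounded $\Lambda$, using two Poisson-density identities: for $A\subseteq\Lambda$ one has $\tfrac{d\Poisson^\beta_A}{d\Poisson_\Lambda}(\Conf)=e^{|\Lambda|-\beta|A|}\beta^{\#\Conf}\1\{\Conf\subseteq A\}$, and for $A\subseteq A_0$ with $E=A_0\setminus A$ bounded one has the void-probability tilt $\tfrac{d\Poisson^\beta_A}{d\Poisson^\beta_{A_0}}(\Conf)=e^{\beta|E|}\1\{\Conf\cap E=\emptyset\}$ (valid even when $A_0$ has infinite volume).

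For the second bullet I start from $P\in\GibbsArea_{\Intensity,\beta}$ and set $\MultiP(d\Conf^1,d\Conf^2):=\Poisson^\beta_{\R^\Dim\setminus B_1(\Conf^1)}(d\Conf^2)\,P(d\Conf^1)$. That $\MultiP(\A)=1$ is immediate, since conditional on $\Conf^1$ every point of $\Conf^2$ lies at distance exceeding one from $\Conf^1$. To verify the WR-DLR on a bounded $\Lambda$, I compute the conditional density of $(\Conf^1_\Lambda,\Conf^2_\Lambda)$ given $(\Conf^1_{\Lambda^c},\Conf^2_{\Lambda^c})$ with respect to $\Poisson_\Lambda\otimes\Poisson_\Lambda$ in three steps. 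First, the independence of the Poisson on disjoint regions gives $\MultiP(d\Conf^2_\Lambda\mid\Conf^1,\Conf^2_{\Lambda^c})=\Poisson^\beta_{\Lambda\setminus B_1(\Conf^1)}(d\Conf^2_\Lambda)$, which by identity (i) contributes $\beta^{\#\Conf^2_\Lambda}\1\{\Conf^2_\Lambda\cap B_1(\Conf^1)=\emptyset\}\,e^{\beta|\Lambda\cap B_1(\Conf^1)|}$ up to a constant. Second, a Bayes inversion tilts the area specification $\Specification^{\Intensity,\beta}_{\Lambda,\Conf^1_{\Lambda^c}}(d\Conf^1_\Lambda)$ by the ratio of the density of $\Conf^2_{\Lambda^c}$ given $\Conf^1$ versus given $\Conf^1_{\Lambda^c}$ alone, which by identity (ii) applied with $E=\Lambda^c\cap B_1(\Conf^1_\Lambda)\setminus B_1(\Conf^1_{\Lambda^c})$ equals $\1\{\Conf^2_{\Lambda^c}\cap B_1(\Conf^1_\Lambda)=\emptyset\}\,e^{\beta|\Lambda^c\cap B_1(\Conf^1_\Lambda)\setminus B_1(\Conf^1_{\Lambda^c})|}$. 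Third, multiplying the three pieces: the two indicators together with the WR-consistency of the boundary amount exactly to $\1_\A$, and the three exponents collapse through the set-volume identity
\[
|\Lambda\cap B_1(\Conf^1)|+|\Lambda^c\cap B_1(\Conf^1_\Lambda)\setminus B_1(\Conf^1_{\Lambda^c})|-\Hamiltonian_\Lambda(\Conf^1)=|\Lambda\cap B_1(\Conf^1_{\Lambda^c})|,
\]
whose right-hand side depends only on the boundary and is absorbed into the partition function. What remains is exactly the WR density $\Intensity^{\#\Conf^1_\Lambda}\beta^{\#\Conf^2_\Lambda}\1_\A$.

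For the first bullet, the most efficient route is to first establish that every $\MultiP\in\GibbsWR_{\Intensity_1,\Intensity_2}$ factors as $\Poisson^{\Intensity_2}_{\R^\Dim\setminus B_1(\Conf^1)}(d\Conf^2)\,P_1(d\Conf^1)$, where $P_1$ is its first marginal: applying the WR-DLR on a bounded $\Lambda$ to a test function depending only on $\Conf^2_\Lambda$, identity (i) immediately identifies the conditional law of $\Conf^2_\Lambda$ given the rest as Poisson$(\Intensity_2)$ on $\Lambda\setminus B_1(\Conf^1)$; letting $\Lambda\uparrow\R^\Dim$ and using the characterization of a Poisson process by its finite-dimensional distributions yields the claimed product decomposition. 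The second bullet applied in reverse then forces $P_1\in\GibbsArea_{\Intensity_1,\Intensity_2}$, and the two maps of the statement are mutually inverse by construction, giving the bijection.

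The technical heart of the argument is the displayed set-volume identity, and it is the only non-routine step: the three terms on the left come from three distinct sources (a Poisson void probability inside $\Lambda$, a void-probability tilt outside $\Lambda$, and the particular form $\Hamiltonian_\Lambda$ of the area Hamiltonian chosen in the paper), yet they combine into a pure boundary quantity. It is elementary from $B_1(\Conf^1)=B_1(\Conf^1_\Lambda)\cup B_1(\Conf^1_{\Lambda^c})$ together with the partition $\R^\Dim=\Lambda\sqcup\Lambda^c$, but without it the residual exponential factor would depend on $\Conf^1_\Lambda$ and the first marginal would fail to be an area-interaction measure in the exact form defined here; everything else is routine Poisson bookkeeping.
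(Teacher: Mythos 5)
The paper does not actually prove this proposition; it gives only the one-line strategy (``integrating the density of the bicolor model with respect to one marginal yields the density of the one-colour model'') and defers to \cite{chayes_kotecky, georgii_haggstrom_maes, widom_rowlinson}. Your argument is a correct and careful implementation of exactly that strategy for the paper's specific (additive, non-symmetric) choice of $\Hamiltonian_\Lambda(\Conf)=\Leb(B_1(\Conf_\Lambda)\setminus B_1(\Conf_{\Lambda^c}))$: the two Poisson density identities are right, the chain-rule decomposition of the conditional law of $(\Conf^1_\Lambda,\Conf^2_\Lambda)$ given the outside is the right way to organise the computation, and your set-volume identity is correct and is indeed the non-trivial bookkeeping point --- it is what reconciles the void probabilities (which live on $B_1(\Conf^1)\cap\Lambda$ and on the part of $B_1(\Conf^1_\Lambda)$ protruding outside $\Lambda$) with the paper's Hamiltonian, whose support is not contained in $\Lambda$.

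Two steps in the first bullet are stated more briskly than they deserve. First, testing the WR-DLR against functions of $\Conf^2_\Lambda$ only identifies the conditional law of $\Conf^2_\Lambda$ given $(\Conf^1_{\Lambda^c},\Conf^2_{\Lambda^c})$, with $\Conf^1_\Lambda$ averaged out; to get the law given all of $\Conf^1$ you must additionally condition the explicit specification density on $\Conf^1_\Lambda$ (routine, since the density is explicit, but it is not literally ``a test function depending only on $\Conf^2_\Lambda$''). Second, and more importantly, ``the second bullet applied in reverse'' is a converse implication that the second bullet does not formally provide. What saves you is that your conditional-density computation is reversible: given the factorization $\MultiP=\Poisson^{\Intensity_2}_{\cdot\setminus B_1(\Conf^1)}\otimes P_1$, equating the two expressions for the conditional law of $\Conf^1_\Lambda$ given $(\Conf^1_{\Lambda^c},\Conf^2_{\Lambda^c})$ --- the one coming from the WR specification with $\Conf^2_\Lambda$ integrated out, and the one coming from Bayes applied to the product structure --- and then stripping off the common $\Conf^2_{\Lambda^c}$-dependent tilt (e.g.\ by evaluating at boundary data with $\Conf^2_{\Lambda^c}$ empty in a neighbourhood of $\Lambda$, an event of positive conditional probability) yields exactly $\Specification^{\Intensity_1,\Intensity_2}_{\Lambda,\Conf^1_{\Lambda^c}}$ via the same set-volume identity. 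You should write this reverse computation out rather than cite the forward implication; with that addition the proof is complete, and the bijection claim then follows since the factorization shows the two maps are mutually inverse.
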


The proof relies on the fact that integrating
the density of the bicolor Widom-Rowlinson model with respect to one marginal, one gets 
the density of the monocoloured Widom-Rowlinson model.
We omit the proof and we refer to 
\cite{chayes_kotecky, georgii_haggstrom_maes,widom_rowlinson} for details.

\subsection{Percolation}
The theory of percolation studies the connectivity in random structures.
Formally the percolation is defined as follows.
\begin{definition}
Let $r>0$;
\begin{itemize}
\item
two sets $\Lambda_1,\Lambda_2 \subseteq \R^\Dim$ are said to be $r$-connected in $\Conf$, written $\Lambda_1 \Connected{B_r(\Conf)}\Lambda_2$ (or $\Lambda_1 \Connected{r}\Lambda_2$ when there is no possible confusion) if 
$B_r(\Conf)\cup \Lambda_1 \cup \Lambda_2$ has a connected component overlapping both $\Lambda_1$ and $\Lambda_2$;
\item
a configuration $\Conf$ is said to $r$-percolate if the germ-grain structure 
$B_r(\Conf)$ has at least one unbounded connected component;
\item
a probability measure $P$ on $\ConfSpace$ is said to \emph{$r$-percolate} (respectively \emph{do not percolate}) if 
$P(\{ \Conf  \text{ $r$-percolates} \})=1$ 
(respectively $P(\{ \Conf \text{ $r$-percolates} \})=0$).
\end{itemize}
\end{definition}

In the next proposition we state the standard percolation phase transition of the Poisson Boolean model. See for instance  \cite{DuminilCopin_Raoufi_Tassion_2018_Sharpness_perco_Boolean_Model} for a modern proof.
\begin{proposition}
\label{propo_perco_poisson}
For every $r>0$, there exists $0<\IntensityThresholdPoisson(r)<\infty$, called \emph{$r$-percolation threshold} of the Poisson Boolean model, such that
\begin{itemize}
\item for every $\Intensity<\IntensityThresholdPoisson (r)$, the measure $\Poisson^{\Intensity}$ does not $r$-percolate, and we have the existence of $c:=c(r,\Intensity)>0$ such that
\begin{align}
\label{eq_decroissance_expo_poisson}
\Poisson^{\Intensity}\left( 0 \Connected{r} \partial \Lambda_n \right)
\leq  \exp (-cn),
\end{align}
where $\partial \Lambda_n$ is the boundary of the set $\Lambda_n=]-n,n]^\Dim$;
\item For every $\Intensity>\IntensityThresholdPoisson (r)$, the measure $\Poisson^{\Intensity}$ $r$-percolates, and we have the existence of $c':=c'(r)>0$ such that for $z$ in a neighbourhood of $\IntensityThresholdPoisson (r)$
\begin{align*}
\Poisson^{\Intensity}\left( 0 \Connected{r} \infty \right)
\geq c'(\Intensity - \IntensityThresholdPoisson (r)).
\end{align*}
\end{itemize}
\end{proposition}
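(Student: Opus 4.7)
The plan is to follow the randomised decision tree / OSSS approach of Duminil-Copin, Raoufi and Tassion cited in the statement. First, I would define $\IntensityThresholdPoisson(r) := \inf\{z>0 : \Poisson^z \text{ $r$-percolates}\}$ and show it is non-trivial. Finiteness follows from a standard coarse-graining argument: at sufficiently large intensity, requiring each cell of a mesoscopic grid to contain at least one point stochastically dominates a supercritical site percolation. Positivity follows from a first-moment/branching bound, since for small $z$ the expected number of balls at graph distance $k$ from the origin is bounded by $(zC)^k$, with $C$ the volume of a ball of radius $2r$.

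The main step is a differential inequality for $\theta_n(z) := \Poisson^z(0 \Connected{r} \partial \Lambda_n)$. I would fix a partition of $\Lambda_n$ into cells and apply the OSSS inequality to a randomised decision tree that starts at $0$ and reveals the Poisson configuration cell by cell in an outward exploration, stopped at a uniformly chosen scale $k \in \{0,\dots,n\}$. The revealment of this algorithm on a cell at distance $k$ is then controlled by $\theta_k(z)$ (heuristically, the algorithm only inspects a cell if that cell is connected to $0$ through the already revealed region). Combined with the continuum Russo formula
\begin{align*}
\frac{d}{dz} \Poisson^z(A)
=
\int_{\R^\Dim} \Poisson^z\bigl(\{x\} \text{ is pivotal for } A \bigr) \, dx
\end{align*}
applied to the increasing event $A = \{0 \Connected{r} \partial \Lambda_n\}$, this yields a DCR-T type differential inequality of the form
\begin{align*}
\theta_n'(z) \geq \frac{c\, n}{\sum_{k=0}^n \theta_k(z)} \, \theta_n(z)(1-\theta_n(z)).
\end{align*}

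From this inequality, the standard ODE analysis produces the dichotomy: either $\sum_k \theta_k(z) < \infty$, which forces $\theta_n(z)$ to decay exponentially in $n$ and yields~\eqref{eq_decroissance_expo_poisson}; or $\theta_\infty(z) := \lim_n \theta_n(z) > 0$, in which case integrating the inequality between $\IntensityThresholdPoisson(r)$ and $z$ yields the mean-field bound $\theta_\infty(z) \geq c'(z - \IntensityThresholdPoisson(r))$ in a right neighbourhood of the threshold. Since the two regimes cannot coexist, they are separated by a single value, which is by definition $\IntensityThresholdPoisson(r)$.

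The main technical obstacle is the construction of the randomised algorithm and the revealment bound in the continuum setting, since one must handle the uncountable family of potential point positions rather than finitely many Bernoulli variables; this is exactly the adaptation carried out in the cited Boolean-model paper. Once the OSSS inequality is available in this setting, the stochastic monotonicity in $z$ (a special case of Proposition~\ref{propo_dom_sto_poisson_area}), Russo's formula, and the ODE analysis are all routine.
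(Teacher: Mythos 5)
Your sketch is correct and follows exactly the strategy the paper relies on for this proposition: the paper gives no proof of its own but simply cites the Duminil-Copin--Raoufi--Tassion sharpness result for the Poisson Boolean model, whose argument (non-triviality of $\IntensityThresholdPoisson(r)$ by coarse-graining and a branching bound, OSSS inequality for a randomised outward exploration at a uniform scale, the Margulis--Russo formula for Poisson processes, and the standard ODE lemma giving the exponential-decay / mean-field-lower-bound dichotomy) is precisely what you outline. The one step you rightly flag as the technical core --- the continuum revealment bound --- is indeed the content of the cited reference, so no gap remains at the level of detail expected here.
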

Concerning area-interaction measures such a behaviour is not proven, and is one of the questions investigated in this paper.
But as a consequence of the Propositions \ref{propo_dom_sto_poisson_area},
\ref{propo_domination_sandwich},
\ref{proposition_non_unicite_au_plus_denombrable} and
\ref{propo_perco_poisson}
we have the existence of a non degenerate percolation threshold, common to all area-interaction measures.
This is stated in the following Proposition.

\begin{proposition}
\label{propo_threshold_area}
For all $\beta>0$ and $r>0$, there exists 
$0<\IntensityThresholdAreaPercolation(\beta,r) <\infty$ such that
\begin{itemize}
\item
for all $\Intensity < \IntensityThresholdAreaPercolation(\beta,r)$, 
any area-interaction measure $P \in \GibbsArea_{\Intensity, \beta}$
almost never $r$-percolates, i.e
\begin{align*}
P(\{ \Conf \ r\text{-percolates}\})
= 0;
\end{align*}
\item
for all $\Intensity > \IntensityThresholdAreaPercolation(\beta,r)$, any area-interaction measures $P \in \GibbsArea_{\Intensity, \beta}$ almost surely $r$-percolates, i.e
\begin{align*}
P(\{ \Conf \ r\text{-percolates}\})
= 1.
\end{align*}
\end{itemize}
\end{proposition}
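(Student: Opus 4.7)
Plan. Fix $\beta>0$ and $r>0$. I will define
\[
\IntensityThresholdAreaPercolation(\beta, r) := \inf\{z > 0 : P^{z,\beta}_{wired}(\omega \text{ $r$-percolates}) > 0 \},
\]
with the convention $\inf \emptyset = +\infty$. By Proposition \ref{propo_domination_sandwich} the map $z\mapsto P^{z,\beta}_{wired}$ is non-decreasing in the stochastic order (take $\beta_1=\beta_2=\beta$ in the second item), and since $r$-percolation is an increasing event, $z \mapsto P^{z,\beta}_{wired}(\omega \text{ $r$-percolates})$ is non-decreasing; the same holds for the free measure. Non-degeneracy follows from the Poisson sandwich of Proposition \ref{propo_dom_sto_poisson_area}: if $z < \IntensityThresholdPoisson(r)$ then $P^{z,\beta}_{wired} \preceq \Poisson^z$ does not $r$-percolate by Proposition \ref{propo_perco_poisson}, giving $\IntensityThresholdAreaPercolation(\beta, r) \geq \IntensityThresholdPoisson(r) > 0$; conversely, if $z > e^{\beta v_\Dim} \IntensityThresholdPoisson(r)$ then $\Poisson^{z e^{-\beta v_\Dim}} \preceq P^{z,\beta}_{wired}$ $r$-percolates, giving $\IntensityThresholdAreaPercolation(\beta, r) \leq e^{\beta v_\Dim}\IntensityThresholdPoisson(r) < \infty$.

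For the subcritical regime $z < \IntensityThresholdAreaPercolation(\beta, r)$, the definition of the threshold together with monotonicity gives $P^{z,\beta}_{wired}(\omega \text{ $r$-percolates}) = 0$. Every $P \in \GibbsArea_{z,\beta}$ satisfies $P \preceq P^{z,\beta}_{wired}$ by the third item of Proposition \ref{propo_domination_sandwich}, and since $r$-percolation is an increasing event we conclude $P(\omega \text{ $r$-percolates}) = 0$.

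The supercritical regime $z > \IntensityThresholdAreaPercolation(\beta, r)$ is the more delicate step, because the definition only yields \emph{positive} percolation probability for $P^{z,\beta}_{wired}$, whereas the statement demands probability one for every $P \in \GibbsArea_{z,\beta}$. My plan is to invoke Proposition \ref{proposition_non_unicite_au_plus_denombrable}: since the set of activities at which $P^{\cdot,\beta}_{free} \neq P^{\cdot,\beta}_{wired}$ is countable, I can pick $z'' \in (\IntensityThresholdAreaPercolation(\beta, r), z)$ at which $P^{z'',\beta}_{free} = P^{z'',\beta}_{wired}$. At such $z''$, the third item of Proposition \ref{propo_domination_sandwich} forces $\GibbsArea_{z'',\beta}$ to be a singleton; the unique Gibbs measure is stationary and extremal, hence ergodic under translations (classical Gibbs theory). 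Since existence of an unbounded cluster is a translation-invariant event, ergodicity upgrades the positive percolation probability at $z''$ into probability one. Monotonicity in $z$ of the free measure then gives $P^{z,\beta}_{free}(\omega \text{ $r$-percolates}) = 1$, and the sandwich $P \succeq P^{z,\beta}_{free}$ transfers this to every $P \in \GibbsArea_{z,\beta}$. This bridge from positive to almost-sure percolation via ergodicity at a uniqueness point is the main obstacle; every other step just combines monotonicity with the Poisson sandwich.
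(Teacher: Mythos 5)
Your proposal is correct and follows essentially the same route as the paper: a common threshold for the free and wired measures obtained from the sandwich dominations (Propositions \ref{propo_dom_sto_poisson_area} and \ref{propo_domination_sandwich}) together with the countability of non-uniqueness activities (Proposition \ref{proposition_non_unicite_au_plus_denombrable}), and non-degeneracy from the Poisson Boolean thresholds. The one point you spell out that the paper leaves implicit is the upgrade from positive to almost-sure percolation via ergodicity (equivalently, tail-triviality) of the unique Gibbs measure at a uniqueness activity, which is a legitimate and standard argument.
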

\begin{proof}
The fact that both the free and wired measures have the same threshold is a consequence of Proposition \ref{propo_domination_sandwich} and Proposition \ref{proposition_non_unicite_au_plus_denombrable}.
The non-degeneracy of this threshold follows from the non-degeneracy of the Poisson Boolean model percolation threshold,  which is a standard property included in the statement of Proposition \ref{propo_perco_poisson},
and from the dominations of
Proposition \ref{propo_dom_sto_poisson_area}:
for all $P \in \GibbsArea_{\Intensity, \beta}$
\begin{equation}\label{encadrement}
\Poisson^{\Intensity e^{-\beta v_d}}
\preceq  P  \preceq
\Poisson^{\Intensity}.
\end{equation}
\end{proof}
\begin{remark}
Let us first notice that by a scaling argument, the percolation thresholds of the Poisson Boolean model satisfies the well-known relation $\IntensityThresholdPoisson (r)
=
\frac{1}{r^\Dim}
\IntensityThresholdPoisson (1)$. Then, from the stochastic dominations \eqref{encadrement}, we have the following bound on the percolation threshold of the area-interaction measures: for all $r>0$ and $\beta\geq 0$,
\begin{align*}
\IntensityThresholdPoisson(r)
\ \leq \
\IntensityThresholdAreaPercolation(\beta,r)
\ \leq \
\IntensityThresholdPoisson(r) \exp (\beta v_\Dim).
\end{align*}

\end{remark}

\section{Results} \label{section_results}
Let us now present our results related to the phase transition of the area-interaction measures. The proofs are given in the following sections. 

\subsection{Sharp phase transition of percolation}
The first result  proves a \emph{sharp phase transition} of percolation for the area-interaction measures in the spirit of Proposition \ref{propo_perco_poisson} for the Boolean model. That means exponential decay of connectivity at distance $n$ in the subcritical phase and a local linear lower bound of the connection at infinity in the supercritical case. 

\begin{theorem}
\label{theo_sharpness_perco_area}
Let $\beta \geq 0$.
\begin{enumerate}
\item 
For all $\Intensity < \IntensityThresholdAreaPercolation(\beta, r) $, there exists 
$\alpha_1 =\alpha_1 (\Intensity,\beta, \Dim, r)>0$ such that for all $P \in \GibbsArea_{\Intensity, \beta}$ and all $n$,
\begin{align}
\label{eq_perco_area_decroissance_exp_sous_critique}
P \left( 0 \Connected{r} \partial \Lambda_n \right)
\leq
\exp (- \alpha_1 n).
\end{align}
\item
There exists $\alpha_2= \alpha_2 (\beta, \Dim, r)$ such that for all 
$\Intensity > \IntensityThresholdAreaPercolation(\beta, r)$ small enough and all $P \in \GibbsArea_{\Intensity, \beta}$,
\begin{align}
\label{eq_perco_area_surlineaire_sur_critique}
P \left( 0 \Connected{r} \infty \right)
\geq
\alpha_2 (\Intensity -\IntensityThresholdAreaPercolation(\beta,r)).
\end{align}
\end{enumerate}
\end{theorem}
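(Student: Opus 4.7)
The plan is to follow the randomised tree algorithm / OSSS strategy of Duminil-Copin, Raoufi and Tassion (as used for the Boolean and random-cluster models), adapted to the area-interaction setting through the stochastic dominations of Proposition \ref{propo_dom_sto_poisson_area} and the bicolor representation of Proposition \ref{propo_representation_wr_area}. Write $\theta_n(\Intensity) := P^{\Intensity,\beta}_{n,wired}(0 \Connected{r} \partial \Lambda_n)$ and $\Sigma_n(\Intensity) := \sum_{k=1}^{n} \theta_k(\Intensity)$. The whole argument reduces to proving a differential inequality of the type
\begin{equation*}
\theta_n'(\Intensity) \;\geq\; \frac{c_\beta}{\Sigma_n(\Intensity)} \, \theta_n(\Intensity)(1-\theta_n(\Intensity)),
\end{equation*}
valid on a neighbourhood of $\IntensityThresholdAreaPercolation(\beta,r)$, from which the ODE lemma of \cite{DuminilCopin_Raoufi_Tassion_2018_Sharpness_perco_Boolean_Model} yields simultaneously exponential decay below $\IntensityThresholdAreaPercolation(\beta,r)$ and the linear mean-field lower bound above it. Once this is obtained for the wired measure, Proposition \ref{propo_domination_sandwich} transfers the two statements to every $P \in \GibbsArea_{\Intensity,\beta}$, while Proposition \ref{proposition_non_unicite_au_plus_denombrable} ensures the threshold defined by wired/free measures coincides with the one of Proposition \ref{propo_threshold_area}.

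To set up the OSSS inequality, I would discretise $\Lambda_n$ into a partition of small cubes $(B_i)_{i\leq N}$ of side $\varepsilon$ and encode a configuration by the vector $(\Conf_{B_i})_i$. Because the area-interaction measure is not a product, direct OSSS does not apply. The bicolor representation of Proposition \ref{propo_representation_wr_area} offers a way around this: sample $(\Conf^1,\Conf^2)$ from $\MultiPoisson^{\Intensity,\beta}_{\Lambda_n}$ conditioned on the non-overlap event $\A$; the marginal $\Conf^1$ is the wired area-interaction measure with suitable boundary condition, and the pair $(\Conf^1_{B_i},\Conf^2_{B_i})_i$ are i.i.d.\ under $\MultiPoisson^{\Intensity,\beta}_{\Lambda_n}$ before conditioning. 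Running the standard OSSS inequality on the i.i.d.\ decomposition and absorbing the conditional density on $\A$ (which is bounded above and below thanks to the hard-core constraint being finite-range) gives an OSSS-type inequality with a multiplicative constant depending only on $\beta$ and $\varepsilon$. I would apply it to the Boolean function $f := \1_{\{0 \Connected{r} \partial \Lambda_n\}}$ using the algorithm that picks $k$ uniformly in $[\![n/2,n]\!]$ and explores the $r$-cluster of $\partial \Lambda_k$, box by box from the outside in; standard revealment bounds give $\max_i \delta_i \leq C \Sigma_n / n$ since a box $B_i$ is revealed only if $B_i$ is $r$-connected to $\partial \Lambda_k$.

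The influences on the right-hand side of the OSSS inequality are then bounded by a Russo-type derivative formula adapted to the area-interaction density $\Intensity^{\#\Conf}e^{-\beta \Hamiltonian_{\Lambda_n}^{wired}}$. Differentiating in $\Intensity$ yields
\begin{equation*}
\theta_n'(\Intensity) \;=\; \frac{1}{\Intensity}\,\mathrm{Cov}_{P^{\Intensity,\beta}_{n,wired}}\!\bigl(\#\Conf,\; \1_{\{0 \Connected{r} \partial \Lambda_n\}}\bigr),
\end{equation*}
which, after the usual pivotal/box decomposition $\#\Conf = \sum_i \#\Conf_{B_i}$ and a Campbell-type computation controlled by the stochastic domination $P^{\Intensity,\beta}_{n,wired} \preceq \Poisson^{\Intensity}$, is shown to dominate $c'_\beta \sum_i \mathrm{Inf}_i(f)$. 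Combining with the OSSS bound produces the announced differential inequality, and the DCRT ODE analysis concludes.

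The main obstacle is the OSSS step: the area-interaction measure is genuinely non-product, and even in the bicolor picture the non-overlap event $\A$ creates long-range dependencies that must be absorbed without blowing up the revealment. A second, more technical difficulty is the Russo formula in the continuum: the pivotal analysis has to be done for the germ-grain connectivity relation $\Connected{r}$ and must produce influences matching the revealment scale, not just integrated derivatives. Provided both points are handled with the bicolor coupling and the uniform bounds of Proposition \ref{propo_dom_sto_poisson_area}, the rest of the argument is a direct adaptation of \cite{DuminilCopin_Raoufi_Tassion_2019_Sharpness_perco_RCM, DuminilCopin_Raoufi_Tassion_2018_Sharpness_perco_Boolean_Model}, with $\alpha_1$ coming from the subcritical branch of the ODE lemma and $\alpha_2$ from its supercritical branch combined with stationarity of $P^{\Intensity,\beta}_{wired}$.
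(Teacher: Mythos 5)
Your overall architecture (reduction to the wired measure, discretisation into small cubes, an OSSS inequality, a revealment bound for a cluster-exploration algorithm, the derivative-as-covariance formula, and the DCRT ODE lemma) matches the paper's. But the step you yourself flag as the main obstacle --- obtaining an OSSS inequality for a genuinely non-product measure --- is not resolved by the mechanism you propose, and this is exactly where the paper's work lies. You suggest passing to the bicolor representation, observing that the pairs $(\Conf^1_{B_i},\Conf^2_{B_i})_i$ are i.i.d.\ under $\MultiPoisson^{\Intensity,\beta}_{\Lambda_n}$ \emph{before} conditioning on $\A$, and then ``absorbing the conditional density on $\A$'' into a multiplicative constant depending only on $\beta$ and $\varepsilon$. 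This fails: the Radon--Nikodym derivative of the conditioned measure with respect to the product measure is $\1_{\A}/\MultiPoisson^{\Intensity,\beta}_{\Lambda_n}(\A)$, and $\MultiPoisson^{\Intensity,\beta}_{\Lambda_n}(\A)$ decays exponentially in $\Leb(\Lambda_n)$, so this density is not bounded uniformly in $n$; and even a uniformly bounded change of measure would not preserve the OSSS inequality, since both the variance on the left and the influences on the right are distorted in uncontrolled ways. The finite range of the hard-core constraint does not localise the conditioning: conditioning on $\A$ is precisely what creates the non-product, area-interaction dependence you are trying to circumvent.

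The paper's resolution is different and is its declared main contribution: it proves the OSSS inequality directly for the specification $\Specification^{\Intensity,\beta}_{\Lambda,\hat{\Conf}_{\Lambda^c}}$ by exhibiting a \emph{monotone sequential sampling scheme} (Proposition \ref{propo_construction_area_thinning}). One independent dominating marked Poisson configuration is attached to each small cube; the area-interaction configuration is built cube by cube, in the random order dictated by the decision tree, by thinning the dominating configuration with the explicit acceptance probabilities of \cite[Proposition 4.1]{Hofer-temmel_Houdebert_2018}; the key point is that these acceptance probabilities are increasing in the already-sampled configuration (because the local energy $\Hamiltonian_{\{x\}}(x\cup\Conf)$ is decreasing in $\Conf$), which supplies the monotonicity needed to run the resampling/FKG argument of \cite{DuminilCopin_Raoufi_Tassion_2019_Sharpness_perco_RCM} in the continuum. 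Without this (or an equivalent) construction your proof does not go through. Two smaller remarks: the paper defines $\theta_n$ using the wired measure on the larger box $\Lambda_{3n+2}$ precisely so that the translated connection probabilities arising in the revealment sum can be dominated by $\theta_s$ via monotonicity of wired measures in the domain --- your sketch silently requires the same device; and in the paper the ``influences'' in the OSSS bound are already the covariances $\mathrm{Cov}(f,\#\Conf_e)$, which sum to $\Intensity\,\theta_n'(\Intensity)$ by Lemma \ref{lemme_derivation_gibbs}, so no separate continuum Russo/pivotality analysis is needed.
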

The proof of this theorem relies on the theory of randomised algorithms developed by Duminil-Copin, Raoufi and Tassion in a series of papers
\cite{DuminilCopin_Raoufi_Tassion_2019_Sharpness_perco_Voronoi, DuminilCopin_Raoufi_Tassion_2019_Sharpness_perco_RCM,
DuminilCopin_Raoufi_Tassion_2018_Sharpness_perco_Boolean_Model}.
The main ingredient, and our main contribution with respect to what was already done, is the proof of an OSSS-type inequality which gives a control of the variance of a function $f$ by a bound depending on the influence of each point of the process.
The proof of this inequality relies on a procedure, sampling an area-interaction configuration using a dominating Poisson configuration.
This procedure is in some sense monotonic with respect to the dominating Poisson configuration. The proof is given in Section \ref{Section_Perco}.

The next proposition gives some qualitative properties of the function $\beta\mapsto \IntensityThresholdAreaPercolation(\beta, r)$ and exact values for $\beta$ large enough. 
The proof is given in  Section \ref{Section_Perco} as well.

\begin{proposition}\label{proposition_property_threshold}
For every $r>0$, the function 
$\beta\mapsto \IntensityThresholdAreaPercolation(\beta, r)$ 
is a non-decreasing Lipschitz map from $\R^+$ to $[\IntensityThresholdPoisson(r),+\infty)$. In particular, it is continuous. Moreover for every $r>0$, there exists $0<\tilde{\beta}_r<\infty$ such that for $\beta>\tilde{\beta}_r$, the equality 
$\IntensityThresholdAreaPercolation(\beta,r) = \beta$ holds.
\end{proposition}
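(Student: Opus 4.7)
My plan is to tackle the three claims in sequence -- monotonicity, the exact identification $\IntensityThresholdAreaPercolation(\beta,r)=\beta$ for $\beta$ large, then global Lipschitz continuity -- because the exact identification is precisely what lets me upgrade a merely local (exponential) bound into a genuine global Lipschitz estimate.

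The non-decreasing property I would extract directly from the second item of Proposition \ref{propo_dom_sto_poisson_area}: holding $z$ fixed and raising $\beta$ produces a stochastically smaller specification in each finite volume, hence after passing to the wired limit (Proposition \ref{propo_domination_sandwich}) a stochastically smaller $P^{z,\beta}_{wired}$. Since $r$-percolation is an increasing event, $P^{z,\beta_2}_{wired}$ $r$-percolating forces $P^{z,\beta_1}_{wired}$ to $r$-percolate whenever $\beta_1\le\beta_2$, which yields $\IntensityThresholdAreaPercolation(\beta_1,r)\le\IntensityThresholdAreaPercolation(\beta_2,r)$; the lower bound $\IntensityThresholdAreaPercolation(\beta,r)\ge\IntensityThresholdPoisson(r)$ is already in Proposition \ref{propo_threshold_area}. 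For a local Lipschitz comparison between parameters $(z_1,\beta_1)$ and $(z_2,\beta_2)$ with $\beta_1\le\beta_2$, I would apply Georgii--K\"uneth to the Papangelou intensity of the area specification, namely $\lambda^{z,\beta}(x\mid\omega)=z\exp(-\beta\,|B_1(x)\setminus B_1(\omega)|)$. The functional $|B_1(x)\setminus B_1(\omega)|$ is decreasing in $\omega$ and takes values in $[0,v_\Dim]$, so a direct check shows $\lambda^{z_2,\beta_2}(x\mid\omega_2)\ge\lambda^{z_1,\beta_1}(x\mid\omega_1)$ for every $\omega_1\subseteq\omega_2$ as soon as $z_2\ge z_1\,e^{(\beta_2-\beta_1)v_\Dim}$, giving $P^{z_1,\beta_1}_{wired}\preceq P^{z_2,\beta_2}_{wired}$ and therefore the crude estimate
\begin{equation*}
\IntensityThresholdAreaPercolation(\beta_2,r)\ \le\ \IntensityThresholdAreaPercolation(\beta_1,r)\,e^{(\beta_2-\beta_1)v_\Dim}.
\end{equation*}

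For the identification $\IntensityThresholdAreaPercolation(\beta,r)=\beta$ when $\beta$ is large, I would lift to the bicolor representation of Proposition \ref{propo_representation_wr_area} and use the Peierls-type argument underlying the classical non-uniqueness results \cite{chayes_kotecky,ruelle_1971,widom_rowlinson}. For $\beta$ large enough (depending on $r$) the phase of the bicolor Gibbs measure in which $\omega^1$ dominates yields a wired area measure $P^{\beta,\beta}_{wired}$ whose first marginal has density arbitrarily close to the maximal packing, so $r$-percolation is immediate; monotonicity of the wired measure in $z$ then extends this to every $z\ge\beta$. Symmetrically, the phase in which $\omega^2$ dominates yields a free measure $P^{\beta,\beta}_{free}$ that confines $\omega^1$ to islands of diameter tending to $0$ with $\beta$, ruling out $r$-percolation, and monotonicity in $z$ extends this to every $z\le\beta$. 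Combined with the common threshold of Proposition \ref{propo_threshold_area}, this forces $\IntensityThresholdAreaPercolation(\beta,r)=\beta$ for $\beta>\tilde{\beta}_r$.

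To conclude, the identification above says the map is the identity on $[\tilde{\beta}_r,\infty)$ and hence $1$-Lipschitz there, while on the compact interval $[0,\tilde{\beta}_r]$ monotonicity bounds the function above by $\tilde{\beta}_r$, so the exponential bound from paragraph two specialises to the explicit Lipschitz constant $C_r:=v_\Dim\,\tilde{\beta}_r\,e^{v_\Dim\tilde{\beta}_r}$. Splitting any interval at $\tilde{\beta}_r$ yields the global Lipschitz constant $\max(1,C_r)$ on $\R^+$, and continuity is then immediate. The main obstacle I foresee is precisely the quantitative Peierls step: one must turn the classical non-uniqueness at $z=\beta$ large into density bounds on the two extremal phases sharp enough to produce $r$-percolation on one side of $z=\beta$ and non-percolation on the other, for arbitrary $r>0$, not merely for $r=1$ where the hard-core symmetry of the bicolor model is most directly exploitable.
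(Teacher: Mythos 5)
Your treatment of monotonicity and of the Lipschitz property is correct and essentially the paper's own argument: the paper likewise reduces to the interval $[0,M]$ where the function is bounded (using the identity beyond $M$), and proves a stochastic domination of wired measures via a uniform comparison of Papangelou intensities $\gamma_{z,\beta}(x,\omega)=z e^{-\beta\Leb(B_1(x)\setminus B_1(\omega))}$ through Georgii--K\"uneth; your inequality $z_2\ge z_1 e^{(\beta_2-\beta_1)v_\Dim}$ is a clean equivalent of the paper's choice of the constant $c$, and the passage from the exponential comparison to a Lipschitz constant on a compact interval is sound.

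The genuine gap is in the middle step, the identification $\IntensityThresholdAreaPercolation(\beta,r)=\beta$ for large $\beta$, which you reduce to two unproved assertions about the extremal bicolor phases. First, on the supercritical side, ``density arbitrarily close to maximal packing, so $r$-percolation is immediate'' does not follow from the Peierls argument: the only lower Poisson domination available has intensity $\beta e^{-\beta v_\Dim}\to 0$, and high mean density alone does not force $r$-percolation for a fixed small $r$. The paper instead gets $r$-percolation of some area measure at $z=\beta$ from the Fortuin--Kasteleyn representation together with the result of \cite{Houdebert_2017_percolation_CRCM} that the continuum random cluster model $r$-percolates, for every $r>0$, once the activity is large. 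Second, and more seriously, the claim that the minority species is ``confined to islands of diameter tending to $0$'' is exactly the hard content you defer to the end as an ``obstacle'': since $\IntensityThresholdPoisson(r)\sim r^{-\Dim}$, a low-density species can still $r$-percolate for large $r$, so ruling out $r$-percolation of the free measure at $z=\beta$ requires showing that the complement of the minority region contains corridors of width larger than $2r$. The paper proves this (for $r>1/2$, the delicate case) by passing to the CRCM, removing the infinite $1/2$-cluster, and showing via a Liggett--Schonmann--Stacey domination that the set of $2r$-cubes entirely covered by $B_{1/2}(\omega)$ dominates a supercritical Bernoulli site percolation, whose infinite cluster blocks any $r$-connected unbounded component of the thinned configuration. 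Without some version of this quantitative argument, the identification --- and hence the global Lipschitz bound that leans on it --- is not established.
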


\subsection{Sharp liquid-gas phase transition}
The other question of interest is the \emph{Sharp liquid-gas phase transition} for which there are several definitions based either on the regularity of the pressure or the uniqueness/non uniqueness of Gibbs measures. Here we say that a \emph{ sharp liquid-gas phase transition} occurs at temperature $1/\beta$ if there exists only one value $z$ such that the Gibbs measures are not unique. This phenomenon is conjectured for several models but there does no exist complete rigorous proof in the continuum. Here we improve existing results for the area-interaction measures. 

\subsubsection{Already known results}
Several results are already known on this subject.
First, it is well-known that the set of Gibbs measures is generally reduced to a singleton when the parameters $z$ or/and $\beta$ are small enough (see for instance \cite{ruelle_livre_1969}). 
As a consequence of a recent disagreement percolation result \cite{Hofer-temmel_Houdebert_2018}, explicit bounds related to $1$-percolation threshold of the Poisson Boolean model are given.

\begin{proposition}
\label{propo_unicite_disagreement_perco}
Recall that $\IntensityThresholdPoisson (r)$ is the percolation threshold of the Poisson Boolean model of constant radii $r$.
Then for every $\Intensity <\IntensityThresholdPoisson (1)$ and every $\beta \geq 0$, there is an unique area-interaction measure. Moreover, by duality, for every $\beta <\IntensityThresholdPoisson (1)$ and every $\Intensity \geq 0$, the uniqueness occurs as well.
\end{proposition}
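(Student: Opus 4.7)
The plan is to show that when $\Intensity < \IntensityThresholdPoisson(1)$, the extremal measures $P^{\Intensity, \beta}_{free}$ and $P^{\Intensity, \beta}_{wired}$ coincide; the sandwich of Proposition \ref{propo_domination_sandwich} then forces $\GibbsArea_{\Intensity, \beta}$ to be a singleton. The key device is the disagreement coupling of \cite{Hofer-temmel_Houdebert_2018}, which I would adapt to the area-interaction specification as follows: for each $n$, build a joint law $\DisagreementCoupling$ on pairs $(\Conf^f, \Conf^w) \in \ConfSpace^2$ whose marginals are the finite-volume free and wired measures on $\Lambda_n$, and such that (a) every point of the symmetric difference $\Conf^f \triangle \Conf^w$ lies in the $1$-cluster of $\partial \Lambda_n$ in $B_1(\Conf^f \cup \Conf^w)$, and (b) the union $\Conf^f \cup \Conf^w$ is stochastically dominated by $\Poisson^{\Intensity}_{\Lambda_n}$. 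The construction is a ball-by-ball exploration: one reveals points of the current $1$-cluster of $\partial \Lambda_n$ iteratively, using the upper Poisson domination $\Specification^{\Intensity, \beta}_{\Lambda, \cdot} \preceq \Poisson^{\Intensity}_{\Lambda}$ from Proposition \ref{propo_dom_sto_poisson_area} to bound the added points by a common driving Poisson configuration, and using monotonicity of the specification in the boundary condition (same proposition) to couple $\Conf^f$ and $\Conf^w$ so that they agree outside the explored cluster.

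Next comes the percolation estimate. Fix a bounded Borel set $\Lambda$ and take $n$ so large that $\Lambda \subset \Lambda_{n-1}$. Property (a) forces $\Conf^f \cap \Lambda = \Conf^w \cap \Lambda$ on the event $\{\Lambda \not\Connected{1} \partial \Lambda_n\}$ inside $B_1(\Conf^f \cup \Conf^w)$, hence by (b)
\[
\DisagreementCoupling\!\left(\Conf^f \cap \Lambda \neq \Conf^w \cap \Lambda\right)
\;\leq\;
\Poisson^{\Intensity}\!\left(\Lambda \Connected{1} \partial \Lambda_n\right)
\;\xrightarrow[n\to\infty]{}\; 0,
\]
by Proposition \ref{propo_perco_poisson} and the assumption $\Intensity < \IntensityThresholdPoisson(1)$. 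Taking a weak limit of $\DisagreementCoupling$ along a subsequence (tightness follows from (b)) produces a coupling of $P^{\Intensity, \beta}_{free}$ and $P^{\Intensity, \beta}_{wired}$ whose marginals agree on every bounded set, so $P^{\Intensity, \beta}_{free} = P^{\Intensity, \beta}_{wired}$ and uniqueness follows. The dual statement is then immediate from the bijection $\GibbsArea_{\Intensity, \beta} \leftrightarrow \GibbsArea_{\beta, \Intensity}$ supplied by Proposition \ref{propo_representation_wr_area}: if $\beta < \IntensityThresholdPoisson(1)$, uniqueness in $\GibbsArea_{\beta, \Intensity}$ (by the first part) transfers to $\GibbsArea_{\Intensity, \beta}$ for any $\Intensity \geq 0$.

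The main obstacle is the simultaneous realisation of properties (a) and (b) in the construction of $\DisagreementCoupling$. The abstract framework of \cite{Hofer-temmel_Houdebert_2018} requires a hereditary specification satisfying a Poisson upper domination together with monotonicity in the boundary condition; the additive form of $\Hamiltonian_{\Lambda}$ in Remark \ref{remark_additivity_hamiltonian} together with Proposition \ref{propo_dom_sto_poisson_area} verifies both inputs, and the finite range of the area Hamiltonian --- two points of $\Conf$ influence each other only through the overlap of their radius-$1$ balls, i.e.\ through $1$-connection --- is precisely what turns the disagreement graph into the $1$-percolation graph and explains the appearance of $\IntensityThresholdPoisson(1)$ rather than some other Boolean parameter.
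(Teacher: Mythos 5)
The paper offers no proof of this proposition---it is quoted as a direct consequence of the disagreement-percolation result of \cite{Hofer-temmel_Houdebert_2018}---and your reconstruction is exactly that argument: a disagreement coupling dominated by $\Poisson^{\Intensity}$, the subcritical decay of $1$-connection under $\Poisson^{\Intensity}$ for $\Intensity<\IntensityThresholdPoisson(1)$, and the duality supplied by Proposition \ref{propo_representation_wr_area}. It is also the same scheme the paper itself deploys in Section \ref{Section_proof_th_unicite} for the stronger Theorem \ref{theo_unicite_avant_threshold}, the only difference being that there the dominating process is the wired area-interaction measure rather than the Poisson process, which is precisely what upgrades the threshold from $\IntensityThresholdPoisson(1)$ to $\IntensityThresholdAreaPercolation(\beta,1)$.
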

In addition, a Fortuin-Kasteleyn representation and  percolation properties of the Continuum Random Cluster Model allow to prove a non uniqueness result for the symmetric bicolor Widom-Rowlinson model \cite{chayes_kotecky}. This result translates directly, thanks to Proposition \ref{propo_representation_wr_area}, to a non uniqueness result of the area-interaction measure.
\begin{proposition}
\label{propo_phase_transition_homogeneous_fk}
There exists $0<\IntensityThresholdAreaNonUnicitySym<\infty$ such that for all 
$\Intensity > \IntensityThresholdAreaNonUnicitySym$
\begin{itemize}
\item
the measure $P^{\Intensity, \Intensity}_{wired}$ does $1/2$-percolate;
\item
the measure $P^{\Intensity, \Intensity}_{free}$ does not $1/2$-percolate;
\end{itemize}
hence
we have $P^{\Intensity, \Intensity}_{free} \not =P^{\Intensity, \Intensity}_{wired}$, and therefore $\# \GibbsArea(\Intensity, \Intensity)>1$.
\end{proposition}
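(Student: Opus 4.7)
The proof follows the strategy of Chayes-Kotecký via a Fortuin-Kasteleyn representation of the symmetric bicolor Widom-Rowlinson model. I would first match the paper's free and wired area specifications with bicolor data: by integrating out $\Conf^2$ in Definition \ref{defi_wr} with $\Intensity_1=\Intensity_2=\Intensity$, one checks that $P^{\Intensity,\Intensity}_{n,wired}$ is the color-1 marginal of the bicolor WR on $\Lambda_n$ in which the annulus $\Lambda_n\setminus\Lambda_{n-1}$ is reserved for color 1 (color 2 is forbidden there), whereas $P^{\Intensity,\Intensity}_{n,free}$ is the color-1 marginal of the bicolor WR in which the complement $\Lambda_n^c$ is filled with color 2. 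Hence $P^{\Intensity,\Intensity}_{wired}$ and $P^{\Intensity,\Intensity}_{free}$ are related to each other by the color swap $\Conf^1\leftrightarrow\Conf^2$.

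Next I would introduce the continuum random cluster model (CRCM) in $\Lambda_n$ with density proportional to $\Intensity^{\#\Conf}\, 2^{\NccLambda(\Conf)}$ against $\Poisson_{\Lambda_n}$, in its free and wired variants, and set up the Edwards-Sokal coupling, namely: sample a CRCM configuration, then assign each connected component an independent uniform color in $\{1,2\}$, with the merged wired boundary cluster fixed to color 1. This recovers the corresponding symmetric bicolor WR. The wired CRCM is known to $1/2$-percolate for $\Intensity$ large enough: this is obtained in \cite{chayes_kotecky} via a Peierls-type contour estimate producing an infinite cluster almost surely above some explicit threshold comparable to $\IntensityThresholdPoisson(1/2)$. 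Through the coupling the infinite wired cluster carries color 1, so $P^{\Intensity,\Intensity}_{wired}$ $1/2$-percolates, giving the first bullet.

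For the second bullet, the color-swap symmetry of the first step identifies $P^{\Intensity,\Intensity}_{free}$ with the color-1 marginal of the color-2-wired bicolor WR. Applying the previous paragraph to color 2, for the same range of $\Intensity$ the set $B_{1/2}(\Conf^2)$ contains an infinite cluster almost surely. The hard-core exclusion $B_{1/2}(\Conf^1)\cap B_{1/2}(\Conf^2)=\emptyset$, combined with a Burton-Keane uniqueness argument for the infinite color-2 cluster, then traps every connected component of $B_{1/2}(\Conf^1)$ in a bounded connected component of $\R^\Dim\setminus B_{1/2}(\Conf^2)$, which prevents $\Conf^1$ from $1/2$-percolating. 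Taking $\IntensityThresholdAreaNonUnicitySym$ above the wired CRCM percolation threshold makes both bullets hold for $\Intensity>\IntensityThresholdAreaNonUnicitySym$, and the non-uniqueness $\#\GibbsArea_{\Intensity,\Intensity}>1$ follows.

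The main obstacle is twofold. First, unlike the lattice FK model where an edge merges at most two components, a point in the continuum CRCM can merge unboundedly many clusters at once, so the clean Papangelou-based domination by Bernoulli/Poisson of the lattice case is unavailable and CRCM percolation for large $\Intensity$ must be obtained by a genuine Peierls contour estimate. Second, the geometric confinement of color 1 in the complement of a percolating color 2 is delicate in dimension $\Dim \ge 3$, since the complement of an unbounded random set may itself be unbounded; the argument therefore really depends on the uniqueness of the infinite color-2 cluster combined with the positive-width corridor enforced by the hard-core exclusion.
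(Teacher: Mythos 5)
The paper does not actually prove this proposition: it is imported from \cite{chayes_kotecky} as a known result, and the underlying construction is only recalled later (in the proof of Proposition \ref{proposition_property_threshold}): sample a Continuum Random Cluster measure of activity $\Intensity$, which $1/2$-percolates for $\Intensity$ large, keep each finite $1/2$-cluster independently with probability $1/2$, and keep or discard the infinite cluster at will; this yields two area-interaction measures with $z=\beta=\Intensity$, one $1/2$-percolating and one not, and the statement for $P^{\Intensity,\Intensity}_{wired}$ and $P^{\Intensity,\Intensity}_{free}$ follows from the sandwich domination of Proposition \ref{propo_domination_sandwich} since percolation is an increasing event. Your overall strategy --- Fortuin--Kasteleyn/Edwards--Sokal representation, Peierls-type percolation of the CRCM at large activity, color-swap symmetry --- is exactly this route, and your treatment of the first bullet is fine.

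Your argument for the second bullet, however, has a genuine gap. You deduce that $\Conf^1$ does not $1/2$-percolate from the uniqueness of the infinite cluster of $B_{1/2}(\Conf^2)$ together with the hard-core exclusion, claiming that every component of $B_{1/2}(\Conf^1)$ is trapped in a bounded connected component of $\R^\Dim\setminus B_{1/2}(\Conf^2)$. As you yourself half-suspect, this is false for $\Dim\ge 3$: the complement of a single unbounded connected set can contain another unbounded connected set at positive distance from it (think of two parallel thickened hyperplanes), so neither uniqueness of the color-2 cluster nor the ``positive-width corridor'' yields any confinement, and Burton--Keane does not help. The correct and much simpler argument is already contained in the Edwards--Sokal coupling you set up: the hard-core constraint forces each connected component of $B_{1/2}(\Conf^1\cup\Conf^2)$ to be monochromatic, so every connected component of $B_{1/2}(\Conf^1)$ \emph{is} a connected component of the underlying CRCM germ-grain structure. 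In the coloring producing the free measure all infinite CRCM clusters receive color $2$, hence the color-1 components are finite CRCM clusters and $\Conf^1$ cannot $1/2$-percolate. Replace the confinement paragraph by this observation and the proof closes.
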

So in the symmetric case $z=\beta$, a standard phase transition is already known, where uniqueness is obtain at low activity $z$ and non-uniqueness at large activity. 
However it is not proved that there exists a threshold between both regimes. As far as we know, this conjecture is still open today. 

In the  non-symmetric case $z\neq\beta$, very few is known expect from Proposition \ref{propo_unicite_disagreement_perco}. In particular, the sharp phase transition around the symmetric case $z=\beta$ was unknown.

\subsubsection{New results about uniqueness}
It is conjectured that the non-uniqueness holds if and only if $z=\beta$ larger than a certain threshold $\beta^*>0$. We do not solve this conjecture here but we show in Corollary \ref{corollary_sharp_liquid_gas_phase_transition} that for $\beta$ large enough the non-uniqueness holds only for $z=\beta$. 
Actually we succeed to prove uniqueness in a larger domain drawn in Figure \ref{figure_uniqueness}.

Our main Theorem, given below, ensures the uniqueness as soon as the area-measures do not $1$-percolate.  
\begin{theorem}
\label{theo_unicite_avant_threshold}
For all $\beta \geq 0$ and 
$\Intensity< \IntensityThresholdAreaPercolation(\beta,1)$, we have
$P^{\Intensity, \beta}_{free}
=
P^{\Intensity, \beta}_{wired}$, and therefore there is uniqueness of the area-interacton measure. By duality the result holds also for all $z \geq 0$ and $\beta<\IntensityThresholdAreaPercolation(z,1)$.
\end{theorem}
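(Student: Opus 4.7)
The plan is to implement a \emph{disagreement percolation} argument in the spirit of \cite{Hofer-temmel_Houdebert_2018}, adapted to the two-parameter area / bicolor Widom--Rowlinson setting. By Proposition \ref{propo_domination_sandwich}, the equality $P^{\Intensity,\beta}_{free} = P^{\Intensity,\beta}_{wired}$ is equivalent to uniqueness of the area-interaction measure, so it suffices to prove that for every bounded $\Lambda_k$,
\[
\lim_{n\to\infty} \bigl\lVert P^{\Intensity,\beta}_{n,free}\vert_{\Lambda_k} - P^{\Intensity,\beta}_{n,wired}\vert_{\Lambda_k} \bigr\rVert_{TV} = 0.
\]
The natural mechanism by which boundary data can propagate inward is via chains of points whose unit balls overlap, which is precisely the interaction range of the area Hamiltonian; this explains why $\IntensityThresholdAreaPercolation(\beta,1)$ is the relevant threshold.

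\textbf{Coupling construction.} For each $n$ I would build a coupling $\DisagreementCoupling_n$ of $P^{\Intensity,\beta}_{n,free}$ and $P^{\Intensity,\beta}_{n,wired}$ on $\ConfSpace^2$ with two key properties: (i) the marginals $\Conf^{f}, \Conf^{w}$ satisfy $\Conf^{f} \subseteq \Conf^{w}$, as allowed by the monotonicity of Proposition \ref{propo_domination_sandwich}; and (ii) every $x \in \Conf^{w} \setminus \Conf^{f}$ lies in a connected component of $B_{1}(\Conf^{w})$ that meets $\partial\Lambda_{n}$. The construction is an inward exploration starting from $\partial\Lambda_{n}$: using the bicolor representation of Proposition \ref{propo_representation_wr_area}, I would sample two bicolor Widom--Rowlinson configurations simultaneously, driven by common Poisson processes of color $1$ (intensity $\Intensity$) and color $2$ (intensity $\beta$), equipped with the wired and free boundary data respectively. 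Outside the currently explored $1$-cluster of $\partial\Lambda_{n}$, the local Widom--Rowlinson weight is identical in the two samples, so the accept/reject decisions coincide there; this delivers property (ii) and is the direct continuum analogue of the discrete disagreement couplings.

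\textbf{Conclusion and main obstacle.} Once the coupling is in hand, property (ii) yields the bound
\[
\DisagreementCoupling_n\!\bigl(\Conf^{f}_{\Lambda_k} \neq \Conf^{w}_{\Lambda_k}\bigr) \leq P^{\Intensity,\beta}_{n,wired}\!\bigl(\Lambda_k \Connected{1} \partial\Lambda_{n}\bigr).
\]
Since $\Intensity < \IntensityThresholdAreaPercolation(\beta,1)$ and $P^{\Intensity,\beta}_{n,wired}$ converges weakly to the non-$1$-percolating measure $P^{\Intensity,\beta}_{wired}$ (Proposition \ref{propo_threshold_area}), a standard monotone limiting argument forces the right-hand side to $0$. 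Hence $P^{\Intensity,\beta}_{free}$ and $P^{\Intensity,\beta}_{wired}$ agree on every $\Lambda_k$, yielding uniqueness. The dual statement follows at once from the bijection $\GibbsArea_{\Intensity,\beta} \leftrightarrow \GibbsArea_{\beta,\Intensity}$ provided by Proposition \ref{propo_representation_wr_area}. The main technical obstacle is the rigorous implementation of (ii) in the continuum: there is no canonical ordering of points to explore, so one must either order points by distance to the boundary and show that the exploration almost surely terminates, or adapt the Hofer-Temmel--Houdebert coupling to the bicolor WR model, where the strictly local (range $1$) hard-core interaction between the two species replaces the non-local volume Hamiltonian and makes the inductive coupling argument tractable.
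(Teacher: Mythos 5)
Your overall architecture coincides with the paper's: a monotone disagreement coupling whose disagreement points are forced into the $1$-cluster of the boundary, the conclusion via non-$1$-percolation of the wired measure, and duality for the second assertion. The reduction to $\lVert\cdot\rVert_{TV}\to 0$ on local events and the final limiting bound are both correct and match the paper (which bounds the difference by $P^{\Intensity,\beta}_{wired}(\Lambda \Connected{B_1(\xi)} \Lambda_{n-2})$).

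The genuine gap is the one you flag yourself: the construction of the coupling with property (ii) is the entire mathematical content of the proof, and you leave it as an ``obstacle'' with only a heuristic (``the local weight is identical outside the explored cluster, so accept/reject decisions coincide''). That heuristic is not a proof, and your worry about the absence of a canonical point ordering is real. The paper resolves it without the bicolor representation, by an induction on \emph{regions} rather than points. Given nested boundary conditions $\Conf^1_{\Lambda^c}\subseteq\Conf^2_{\Lambda^c}$, define the deterministic disagreement zone $\Gamma=\{x\in\Lambda:\ \mathrm{dist}(x,\Conf^2_{\Lambda^c})\le 2\}$, i.e.\ the set of locations whose unit ball can overlap a unit ball of the larger boundary condition. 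If $\Gamma\neq\emptyset$, sample $\xi^2\sim\Specification^{\Intensity,\beta}_{\Lambda,\Conf^2_{\Lambda^c}}$ and obtain $\xi^1\sim\Specification^{\Intensity,\beta}_{\Lambda,\Conf^1_{\Lambda^c}}$ as a Strassen thinning of $\xi^2$ (legitimate by the monotonicity of the specification in the boundary condition, Proposition \ref{propo_dom_sto_poisson_area}); keep only the points in $\Gamma$ and recurse on $\Lambda\setminus\Gamma$ with the augmented boundary conditions $\Conf^i_{\Lambda^c}\cup\xi^i_\Gamma$. When $\Gamma=\emptyset$ the two specifications on the remaining region are \emph{identical} (the Hamiltonian no longer sees either boundary condition), so one sets $\xi^1=\xi^2$ and stops. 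Every disagreement point is created inside some $\Gamma$, hence is within distance $2$ of the current enlarged $\Conf^2$-boundary, which chains back to the original one — this is exactly your property (ii). Termination is almost sure because the configuration sampled in $\Gamma$ is empty with probability bounded below, after which the next $\Gamma$ is empty; the number of rounds is dominated by a geometric variable. Your alternative route through the bicolor model with common driving Poisson processes is plausible but would require you to (a) identify the correct bicolor boundary data corresponding to the free and wired area measures and (b) still produce an a.s.\ terminating exploration; as written, neither is done, so the proof is incomplete at its central step.
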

\begin{figure}[h!]
\centering
\begin{tikzpicture}[scale=0.7]
\draw [>=latex,->] (0,0) -- (10,0) node[right] {$\Intensity$};
\draw [>=latex,->] (0,0) -- (0,10) node[above] {$\beta$};
\draw (2,0) node[below]{$\IntensityThresholdPoisson(1)$} node {$\bullet$};
\draw (0,2) node[left]{$\IntensityThresholdPoisson(1)$} node {$\bullet$};
\draw [dashed] (0,0) -- (10,10) node[above right] {$\Intensity = \beta$};
\draw (5,5) node {$\bullet$};
\draw (5,5) node[below right] {$\IntensityThresholdAreaNonUnicitySym$};
\draw [red] (5,5) -- (10,10);
\draw [>=latex,<-] (7.5,7.5) -- (11,9.5); 
\node[draw,align=center] at (14,8.5) {Non uniqueness \\ symmetric case \\ Prop \ref{propo_phase_transition_homogeneous_fk}};
\draw [line width=1pt] (2,0) arc (180:120.05:8)--(7.5,7.725);
\draw [line width=1pt] (7.5,7.725) arc (300:315:5)--(10,10);

\node[align=center] at (-3,4) 
{$\beta \mapsto \IntensityThresholdAreaPercolation (\beta,1)$};
\draw [>=latex,<-] (2.7,3.5) -- (-0.5,4);

\fill [blue, opacity=0.60] (0,0)--(2,0)-- (2,0) arc (180:120.05:8)--
(7.5,7.725)--(7.5,7.725) arc (300:315:5)-- (10,10)--(0,10);

\fill [blue, opacity=0.60](0,0)-- (0,2)-- (0,2) arc (270:329.5:8)--
(7.725,7.5)--(7.725,7.5) arc (150:135:5)-- (10,10)--(10,0)
;

\node[draw,align=center] at (5,11.5) {uniqueness \\ Th \ref{theo_unicite_avant_threshold}};
\draw [>=latex,<-] (3,7.5) -- (5,10.5);
\draw [>=latex,<-] (7,3) -- (5,10.5);
\draw (4,3) node[above] {\bf \Huge ?} ;
\end{tikzpicture}
\caption{Uniqueness/non-uniqueness regimes for the area-interaction measures with parameters $z,\beta$.}
\label{figure_uniqueness}
\end{figure}
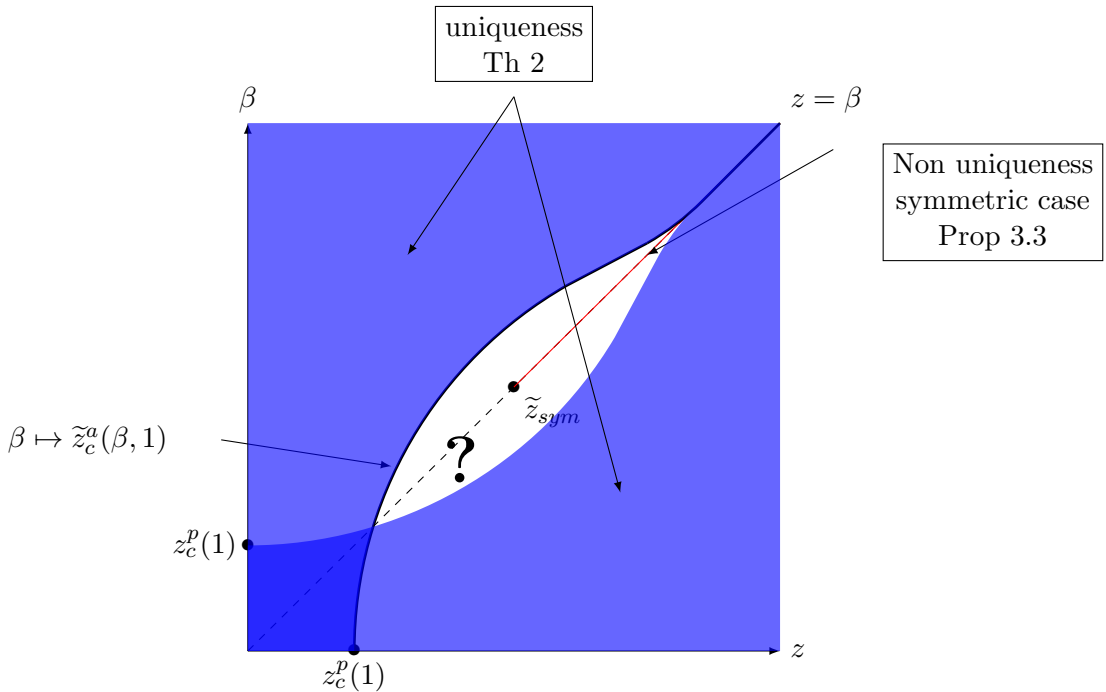
The proof of this theorem relies on a generalization of the disagreement percolation technique, relying on the construction of a coupling, called \emph{disagreement coupling} comparing the influence of the boundary condition to a domination Poisson point process.
Using the monotonicity of the area interaction, see Proposition \ref{propo_dom_sto_poisson_area}, a better dominating measure is the wired area-interaction measure.
The dominating measure is not a Poisson point process and therefore the construction of the disagreement coupling is more elaborate, even though it still relies on the original idea of van den Berg and Maes \cite{vandenberg_maes}.
The proof of Theorem \ref{theo_unicite_avant_threshold} is done in Section \ref{Section_proof_th_unicite}.
 Although both Theorem \ref{theo_sharpness_perco_area} and \ref{theo_unicite_avant_threshold} are related, there proofs are independent from one another. 
\begin{corollary}\label{corollary_sharp_liquid_gas_phase_transition}
For $\beta$ larger than $\tilde{\beta}_1$, the area-interaction measures with parameters $z,\beta$ are non-unique if and only if $z=\beta$. The sharp liquid-gas transition occurs.
\end{corollary}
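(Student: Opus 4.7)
The plan is to assemble the three already proved ingredients: Theorem \ref{theo_unicite_avant_threshold} (uniqueness below the $1$-percolation threshold, together with its dual version), Proposition \ref{proposition_property_threshold} (which gives $\IntensityThresholdAreaPercolation(\beta,1)=\beta$ for $\beta$ large enough), and Proposition \ref{propo_phase_transition_homogeneous_fk} (non-uniqueness along the diagonal $z=\beta$ for $z$ large). The only bookkeeping step is to enlarge $\tilde{\beta}_1$ if needed so that also $\tilde{\beta}_1 \geq \IntensityThresholdAreaNonUnicitySym$; fix then $\beta > \tilde{\beta}_1$, and distinguish the three cases $z<\beta$, $z>\beta$, $z=\beta$.

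For $z<\beta$, Proposition \ref{proposition_property_threshold} gives $\IntensityThresholdAreaPercolation(\beta,1)=\beta$, so that $z < \IntensityThresholdAreaPercolation(\beta,1)$, and Theorem \ref{theo_unicite_avant_threshold} yields $P^{z,\beta}_{free}=P^{z,\beta}_{wired}$, hence uniqueness. For $z>\beta$, the roles of $z$ and $\beta$ are swapped: since $z>\beta>\tilde{\beta}_1$, Proposition \ref{proposition_property_threshold} gives $\IntensityThresholdAreaPercolation(z,1)=z$, so that $\beta < \IntensityThresholdAreaPercolation(z,1)$, and the duality statement of Theorem \ref{theo_unicite_avant_threshold} again yields uniqueness.

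For $z=\beta$, the assumption $\beta>\tilde{\beta}_1\geq \IntensityThresholdAreaNonUnicitySym$ puts us in the regime covered by Proposition \ref{propo_phase_transition_homogeneous_fk}, which provides $P^{\beta,\beta}_{free}\neq P^{\beta,\beta}_{wired}$, hence $\#\GibbsArea_{\beta,\beta}>1$. Combining the three cases gives the announced equivalence.

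There is essentially no real obstacle here: the corollary is a diagram-chasing consequence of the main theorems, whose picture is exactly the one sketched in Figure \ref{figure_uniqueness} (the two curved branches of the uniqueness region merge into the single diagonal once $\beta$ is large enough that $\IntensityThresholdAreaPercolation(\cdot,1)$ becomes the identity). The only mildly delicate point is to be explicit about the fact that one can choose a single threshold handling both the equality $\IntensityThresholdAreaPercolation(\beta,1)=\beta$ and the non-uniqueness condition $\beta>\IntensityThresholdAreaNonUnicitySym$, which is immediate by taking the maximum of the two constants.
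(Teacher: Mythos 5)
Your proof is correct and follows exactly the route the paper takes: the corollary is stated there as a direct consequence of Theorem \ref{theo_unicite_avant_threshold} (with its dual version), Proposition \ref{proposition_property_threshold}, and Proposition \ref{propo_phase_transition_homogeneous_fk}, and your three-case split $z<\beta$, $z>\beta$, $z=\beta$ is just the explicit version of that. Your remark about enlarging $\tilde{\beta}_1$ so that it also exceeds $\IntensityThresholdAreaNonUnicitySym$ is a sensible precaution (and in fact the paper's construction of $\tilde{\beta}_1$ already guarantees it).
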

\begin{proof}
It is a direct consequence of Theorem \ref{theo_unicite_avant_threshold}, Proposition \ref{proposition_property_threshold}, 
Proposition \ref{propo_phase_transition_homogeneous_fk} and the duality property given in Proposition \ref{propo_representation_wr_area}.
\end{proof}

\section{Proofs related to percolation results}\label{Section_Perco}
In this section we give the proofs of Theorem \ref{theo_sharpness_perco_area} and Proposition \ref{proposition_property_threshold} involving the sharp phase transition of percolation.

\subsection{Proof of Theorem \ref{theo_sharpness_perco_area}}
First let us note that it is enough to prove Theorem \ref{theo_sharpness_perco_area} for the wired area-interaction measure $P_{wired}^{\Intensity, \beta}$. Indeed, recall that from Proposition \ref{propo_domination_sandwich} we have the sandwich domination: for all $P \in \GibbsArea_{\Intensity, \beta}$,
$P_{free}^{\Intensity, \beta}
\preceq  P  \preceq
P_{wired}^{\Intensity, \beta} $.
Therefore the equation \eqref{eq_perco_area_decroissance_exp_sous_critique}, i.e. the exponential decay of connectivity when 
$\Intensity < \IntensityThresholdAreaPercolation (\beta, r)$, translates directly from $P_{wired}^{\Intensity, \beta} $ to all $P \in \GibbsArea_{\Intensity,\beta}$.
For the equation \eqref{eq_perco_area_surlineaire_sur_critique}, consider 
$\Intensity >\Intensity'> \IntensityThresholdAreaPercolation(\beta,r)$ such that
$P_{free}^{\Intensity' ,\beta}=P_{wired}^{\Intensity' ,\beta}$.
From Proposition \ref{proposition_non_unicite_au_plus_denombrable} the parameter $\Intensity'$ can be considered as close to $\Intensity$ as we need.
But once again from Proposition \ref{propo_domination_sandwich} we have
\begin{align*}
P\left( 0 \Connected{r} \infty  \right) 
& \geq
P_{free}^{\Intensity, \beta}\left( 0 \Connected{r} \infty  \right)
\\ &\geq
P_{free}^{\Intensity', \beta}\left( 0 \Connected{r} \infty  \right) 
\\ & = 
P_{wired}^{\Intensity', \beta}\left( 0 \Connected{r} \infty  \right)
\\ & \geq \alpha_2 (\Intensity' - \IntensityThresholdAreaPercolation )
\underset{\Intensity' \to \Intensity}{\longrightarrow}
\alpha_2 (\Intensity - \IntensityThresholdAreaPercolation ),
\end{align*}
and Equation \eqref{eq_perco_area_surlineaire_sur_critique} is proved.

Through the remainder of this section the parameters $\beta>0$ and $r>0$ are fixed and might be omitted from notations and we consider only the wired case.
Let $\mu_n^\Intensity:=P^{\Intensity, \beta}_{3n+2, wired}$.
We are considering the connection probability
\begin{align*}
\theta_n(\Intensity)
=
\mu_n^\Intensity (0 \Connected{r} \partial \Lambda_n),
\end{align*}
where $\partial \Lambda_n$ is the boundary of $\Lambda_n=]-n,n]^\Dim$.
\begin{remark}
The term $3n+2$ was chosen for several reasons.
First the term "$+2$" is there to ensure that the wired measure $\mu_n^\Intensity$ is well defined, even for $n=0$.
Second the factor $3$ is there to ensure a good inclusion of boxes in \eqref{eq_preuve_sharpness_majoration_revealment}.
\end{remark}
\begin{lemma}
\label{lemme_limite_theta_n}
For each $z$, the sequence $(\theta_n(\Intensity))$ converges and we have
\begin{align*}
\theta (z) 
:= 
\lim_{n \to \infty} \theta_n(\Intensity)
\leq
P^{\Intensity, \beta}_{wired}\left(0 \Connected{r} \infty \right).
\end{align*}
\end{lemma}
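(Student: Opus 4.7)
The natural strategy is to show the sequence $(\theta_n(z))_n$ is eventually non-increasing (hence convergent), then bound its limit by the infinite-volume percolation probability in two stages: first reduce the moving target $\partial \Lambda_n$ to a fixed $\partial \Lambda_K$ and send the ambient box to infinity, then let $K\to\infty$. The technical point is that both the measure $\mu_n^z$ and the event depend on $n$, so monotonicity in the stochastic order (ambient box varying, event fixed) and monotonicity of the events (ambient box fixed, target varying) must be invoked separately.

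For the monotonicity of $(\theta_n(z))_n$, iterating Proposition \ref{propo_dom_sto_free_wired_boite} gives the stochastic domination $P^{z,\beta}_{3n+5,\,wired}|_{\Lambda_{3n+2}} \preceq P^{z,\beta}_{3n+2,\,wired}$. For $n$ large enough that $n+1+r \leq 3n+2$, the event $\{0 \Connected{r} \partial \Lambda_{n+1}\}$ is increasing and depends only on the configuration in $\Lambda_{3n+2}$: indeed points of $\omega$ whose $r$-ball does not intersect $\Lambda_{n+1}$ cannot contribute to a connected component that meets both $\{0\}$ and $\partial \Lambda_{n+1}$. Hence the stochastic domination gives $\mu_{n+1}^z(0\Connected{r}\partial\Lambda_{n+1}) \leq \mu_n^z(0\Connected{r}\partial\Lambda_{n+1})$. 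Combining this with the topological inclusion $\{0\Connected{r}\partial \Lambda_{n+1}\} \subseteq \{0\Connected{r}\partial \Lambda_n\}$ — which holds because any continuous path from $0$ to $\partial \Lambda_{n+1}$ inside the ball structure must cross $\partial \Lambda_n$ first — yields $\theta_{n+1}(z) \leq \theta_n(z)$. The sequence being bounded below by $0$, it converges.

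For the upper bound on the limit, fix $K$; for $n \geq K+r$ the same topological inclusion gives
\begin{align*}
\theta_n(z) \leq \mu_n^z\bigl(0 \Connected{r} \partial \Lambda_K\bigr).
\end{align*}
The event $\{0\Connected{r}\partial\Lambda_K\}$ is increasing and depends only on $\omega\cap\Lambda_{K+r}$, while the restrictions $P^{z,\beta}_{m,\,wired}|_{\Lambda_{K+r}}$ are stochastically non-increasing in $m$ (again by iterating Proposition \ref{propo_dom_sto_free_wired_boite}). By the weak convergence of the wired measures to $P^{z,\beta}_{wired}$ stated in Section \ref{section_preliminaries}, one obtains
\begin{align*}
\lim_{n\to\infty} \mu_n^z\bigl(0 \Connected{r} \partial \Lambda_K\bigr) = P^{z,\beta}_{wired}\bigl(0 \Connected{r} \partial \Lambda_K\bigr).
\end{align*}
Hence $\lim_n \theta_n(z) \leq P^{z,\beta}_{wired}(0\Connected{r}\partial\Lambda_K)$, and letting $K\to\infty$ via the decreasing intersection $\{0\Connected{r}\infty\} = \bigcap_K \{0\Connected{r}\partial\Lambda_K\}$ and continuity of probability from above yields the claimed bound.

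The main subtle step is verifying that $\{0 \Connected{r} \partial \Lambda_n\}$ is essentially local (determined by $\omega$ in $\Lambda_{n+r}$), which justifies applying the stochastic domination on a fixed box; this reduces to the elementary observation that a point whose $r$-ball is disjoint from $\Lambda_n$ cannot join $\{0\}$ to $\partial\Lambda_n$ in a connected component. Everything else is bookkeeping of two nested monotonicities and a continuity-of-measure argument.
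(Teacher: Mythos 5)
Your proposal is correct and follows essentially the same route as the paper: eventual monotonicity of $(\theta_n(z))$ via Proposition \ref{propo_dom_sto_free_wired_boite} combined with the locality of $\{0\Connected{r}\partial\Lambda_n\}$ and the inclusion of events, then freezing the target $\partial\Lambda_K$, passing to the weak limit $P^{\Intensity,\beta}_{wired}$, and letting $K\to\infty$. The only difference is the order in which the two monotonicities are applied, which is immaterial.
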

\begin{proof}
The event $\{0 \Connected{r} \partial \Lambda_n \}$ depends only on the points inside $\Lambda_{n+r}$, which is included in $\Lambda_{3n+2}$ as soon as $n\geq r/2$.
Therefore, using Proposition \ref{propo_dom_sto_free_wired_boite} we have for such $n$:
\begin{align*}
\theta_n(\Intensity)
& =
P^{\Intensity, \beta}_{3n+2, wired} (0 \Connected{r} \partial \Lambda_n)
\\ & \geq 
P^{\Intensity, \beta}_{3(n+1)+2, wired} (0 \Connected{r} \partial \Lambda_n)
\\ & \geq 
P^{\Intensity, \beta}_{3(n+1)+2, wired} (0 \Connected{r} \partial \Lambda_{n+1})
=
\theta_{n+1}(\Intensity).
\end{align*}
Hence the sequence is decreasing for $n$ large enough and the convergence follows.
For the inequality, just notice that for any $k$
\begin{align*}
\theta(\Intensity)
& = 
\lim_{n \to \infty} \theta_n(\Intensity)
\leq
\lim_{n \to \infty}
P^{\Intensity,\beta}_{3n+2,wired} (0 \Connected{r} \partial \Lambda_k)
=
P^{\Intensity,\beta}_{wired} (0 \Connected{r} \partial \Lambda_k).
\end{align*}
Letting $k$ go to infinity yields the result.
\end{proof}
Recall that $\IntensityThresholdAreaPercolation (\beta,  r) \in ]0,\infty[$ is the percolation threshold of the wired (and free as well) area-interaction measure, defined in Proposition \ref{propo_threshold_area}.
\begin{theorem}
\label{theo_sharpness_perco_mesure_wired}
\begin{enumerate} Let $\beta>0$ and $r>0$.
\item 
For all $\Intensity < \IntensityThresholdAreaPercolation(\beta,r) $, there exists 
$\alpha_1 =\alpha_1 (\Intensity,\beta, \Dim, r)>0$ such that, for all $n$,
\begin{align}
\label{eq_perco_wired_decroissance_exp_sous_critique}
\theta_n(\Intensity)
\leq
\exp (- \alpha_1 n).
\end{align}
\item
There exists $\alpha_2= \alpha_2 (\beta, \Dim, r)$ such that for all 
$\Intensity > \IntensityThresholdAreaPercolation(\beta,r) $ small enough,
\begin{align}
\label{eq_perco_wired_surlineaire_sur_critique}
\theta (\Intensity)
\geq
\alpha_2 (\Intensity - \IntensityThresholdAreaPercolation ).
\end{align}
\end{enumerate}
\end{theorem}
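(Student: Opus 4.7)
The plan is to follow the randomised tree algorithm strategy of Duminil-Copin, Raoufi and Tassion, adapted to the continuum Gibbsian setting of the wired area-interaction measure. The engine will be a differential inequality of the form
\[
\theta_n'(z) \;\geq\; \frac{c \, n}{\Sigma_n(z)}\,\theta_n(z)\bigl(1-\theta_n(z)\bigr),
\qquad \Sigma_n(z):=\sum_{k=0}^{n}\theta_k(z),
\]
from which both \eqref{eq_perco_wired_decroissance_exp_sous_critique} and \eqref{eq_perco_wired_surlineaire_sur_critique} follow by the classical ODE lemma of Duminil-Copin--Tassion: either $\Sigma_\infty(z)<\infty$ (yielding exponential decay of $\theta_n$) or $\theta(z)\geq c'(z-\IntensityThresholdAreaPercolation)$ in a neighbourhood of the threshold.

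The first main step is to establish an OSSS-type inequality for $\mu_n^z$. Since $\mu_n^z$ is not a product measure, I would encode a sample from $\mu_n^z$ as a deterministic function of a dominating Poisson configuration $\Conf^{\text{dom}}\sim \Poisson^{z}_{\Lambda_{3n+2}}$ together with auxiliary i.i.d. uniform marks on $[0,1]$ attached to each point. Concretely, enumerate the points of $\Conf^{\text{dom}}$ in some order and accept/reject them one at a time according to the conditional acceptance probability prescribed by the Hamiltonian; because the wired area-interaction density is a bounded decreasing functional of the already-accepted points and acceptance can be made monotone in the marks, one obtains a measurable map $\Phi$ such that $\Phi(\Conf^{\text{dom}},\mathbf{U})\sim \mu_n^z$. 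With this representation, for any Boolean function $f$ of $\Phi$ and any randomised decision tree $T$ querying the coordinates $(x,U_x)$ of $\Conf^{\text{dom}}$, the standard OSSS argument produces
\[
\mathrm{Var}_{\mu_n^z}(f) \;\leq\; 2\sum_{x \in \Lambda_{3n+2}} \delta_x(T)\,\mathrm{Inf}_x(f),
\]
where $\delta_x(T)$ is the probability that $T$ reveals $x$ and $\mathrm{Inf}_x$ is the Poisson influence. Proving this inequality rigorously in the continuum is where I expect most of the technical work: one has to discretise, check monotonicity of the sampling procedure, pass to the limit, and relate the Poisson influence to a Gibbsian one.

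The second step is a Russo-type formula. Writing the density of $\mu_n^z$ with respect to $\Poisson_{\Lambda_{3n+2}}$ and differentiating in $z$ gives
\[
\frac{\partial}{\partial z} \mu_n^z(A_n) \;=\; \frac{1}{z}\int \bigl(\#\Conf-\mathbb{E}_{\mu_n^z}[\#\Conf]\bigr)\,\1_{A_n}\, d\mu_n^z,
\]
for $A_n:=\{0\Connected{r}\partial\Lambda_n\}$, which after standard manipulations and using that $A_n$ is increasing can be bounded below by $\sum_x \mathrm{Inf}_x(\1_{A_n})$ up to absolute constants depending on $\beta$ and $v_d$ (via the stochastic domination of Proposition~\ref{propo_dom_sto_poisson_area}).

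The third step is the construction of a family of algorithms $T_1,\dots,T_n$ revealing $\1_{A_n}$. For each $k\in\{1,\dots,n\}$, $T_k$ explores the $r$-connected component of $\partial \Lambda_k$ in a BFS manner and stops if it reaches $\{0\}$ or $\partial\Lambda_n$. By the choice of box sizes and translation invariance consequences of the wired construction, for any $x\in\Lambda_{3n+2}$ one has $\delta_x(T_k)\leq \mu_n^z(x\Connected{r}\partial\Lambda_k)$, and averaging over $k$ gives
\[
\frac{1}{n}\sum_{k=1}^n \delta_x(T_k) \;\leq\; \frac{C}{n}\,\Sigma_n(z).
\]
Plugging this into OSSS, summing, and invoking Russo delivers the differential inequality above.

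The main obstacle will be the rigorous justification of the OSSS inequality for $\mu_n^z$: producing an honest monotone sampling of the wired area-interaction measure from a Poisson process with extra randomness, and tracking how the algorithm's queries translate between the Poisson and Gibbsian sides. Everything afterwards (Russo formula, algorithm design, ODE argument) is a relatively standard adaptation of \cite{DuminilCopin_Raoufi_Tassion_2018_Sharpness_perco_Boolean_Model}.
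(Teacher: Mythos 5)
Your overall strategy coincides with the paper's: a monotone sampling of the wired measure from dominating Poisson randomness, an OSSS inequality, the derivative formula $\frac{d}{dz}\mu_n^z(f)=\frac1z\mathrm{Cov}_{\mu_n^z}(f,\#\Conf)$, exploration algorithms for $\partial\Lambda_s$, and the Duminil-Copin--Raoufi--Tassion ODE lemma. The reduction of \eqref{eq_perco_wired_surlineaire_sur_critique} and \eqref{eq_perco_wired_decroissance_exp_sous_critique} to that differential inequality is exactly what the paper does.

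The genuine gap is in the formulation and proof of the OSSS step, which you yourself identify as the main obstacle but leave unresolved in a way that matters. As written, your inequality sums over the (random, uncountably located) points $x$ of a \emph{single} dominating Poisson configuration with marks, and the decision tree queries the coordinates $(x,U_x)$. The hybrid (resampling) argument behind OSSS requires that the randomness be organised into a fixed finite list of independent coordinates, so that one can swap the revealed coordinates one at a time against an independent copy and telescope; with one global dominating configuration whose points are accepted sequentially, the acceptance of a point depends on all previously accepted points, the ``coordinates'' are not independent, and the telescoping identity $\E[f(\gamma^0)]-\E[f(\gamma^\tau)]$ does not decompose. The paper's resolution is precisely the missing idea: discretise $\Lambda_{3n+2}$ into $\varepsilon$-cubes $\Delta_e^\varepsilon$, attach to \emph{each cube} its own independent dominating marked Poisson configuration $\Conf^{D,U,i}$, and build the Gibbs sample cube by cube via the explicit thinning probability of \cite{Hofer-temmel_Houdebert_2018} (Proposition \ref{propo_construction_area_thinning}); monotonicity of the thinning probability in the already-sampled configuration is what makes the map increasing. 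This also fixes your second loose end: the resulting OSSS bound comes out directly in terms of $\mathrm{Cov}_{\mu_n^z}(f,\#\Conf_e)$ (plus an $\mathcal O(\varepsilon^d)$ error from cubes containing two or more points), which is exactly the quantity produced by the Russo-type derivative formula, so no separate comparison between a ``Poisson influence'' and a Gibbsian one is needed. Without specifying this per-cell independent randomisation and the covariance form of the influence term, the chain OSSS $\to$ Russo $\to$ differential inequality does not close.
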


Before proving this theorem, let us see quickly how it leads to the proof of Theorem \ref{theo_sharpness_perco_area}.
The equation \eqref{eq_perco_wired_surlineaire_sur_critique} together with Lemma \ref{lemme_limite_theta_n} implies the equation \eqref{eq_perco_area_surlineaire_sur_critique}, while \eqref{eq_perco_area_decroissance_exp_sous_critique} is a consequence of \eqref{eq_perco_wired_decroissance_exp_sous_critique} and Proposition \ref{propo_dom_sto_free_wired_boite}.

The proof of Theorem \ref{theo_sharpness_perco_mesure_wired} relies on the theory of randomised algorithms popularized in
 \cite{DuminilCopin_Raoufi_Tassion_2019_Sharpness_perco_Voronoi,DuminilCopin_Raoufi_Tassion_2019_Sharpness_perco_RCM,
 DuminilCopin_Raoufi_Tassion_2018_Sharpness_perco_Boolean_Model}.
 First we are going to prove in Section \ref{section_OSSS} a generalized version of the OSSS inequality satisfied by $\Specification^{\Intensity,\beta}_{\Lambda,\hat{\Conf}_{\Lambda^c}}$ for every cube $\Lambda$ and every boundary condition $\hat{\Conf}_{\Lambda^c}$.
 
This inequality, valid in particular for the wired measure $P^{\Intensity,\beta}_{3n+2,wired}$, is then applied in Section \ref{section_utilisation_OSSS} in order to get the result.
\subsection{OSSS inequality}\label{section_OSSS}
\subsubsection{Introduction of the formalism}
In this section $\Lambda$ is a fixed cube of side length $c>0$.
Let $\varepsilon>0$ such that $c / \varepsilon$ is a positive integer.
We are dividing the box $\Lambda$ into small cubes of size $\varepsilon$.
Let $t:= (c / \varepsilon)^\Dim$ be the total number of such cubes.
Therefore a configuration $\Conf$ in $\Lambda$ can be written as the collection 
$(\Conf_e)_{e \in E}$ where $E:=\Lambda\cap \varepsilon (\Z+ 1/2) ^\Dim $ and 
$\omega_e := \omega_{\Delta_e^\varepsilon}$, where 
$  \Delta_e^\varepsilon = e \oplus \varepsilon ]-1/2,1/2]^\Dim$.
For an enumeration $(e_1, \dots, e_t)$ of the cubes, we wrote $e_{[i]}=(e_1, \dots, e_i)$ and $\Conf_{e_{[i]}}=(\Conf_{e_1}, \dots, \Conf_{e_i})$.

Consider a Boolean function $f:\ConfSpace \to \{0,1\}$ and consider a decision tree $T$ determining the value $f(\Conf)$.
A decision tree queries the configuration $\Conf$ one cube after the other.
Hence to a random configuration is associated a random ordering of the cubes  $\egras=(\egras_1, \dots ,\egras_t)$.
It starts from a deterministic cube $\egras_1$ and looks at the configuration $\Conf_{\egras_1}$.
Then it chooses a second cube $\egras_2$ depending on $\egras_1$ and $\Conf_{\egras_1}$ and carries out.
At step $i>1$ the cubes $\egras_{[i-1]}$ have been visited and the configuration 
$\Conf_{\egras_{[i-1]}}$ is known.
The next cube $\egras_i$ to be explored is then expressed as a deterministic function of what have been already explored, i.e.
\begin{align}
\label{eq_function_prochain_cube}
\egras_i=\phi_i(\egras_{[i-1]},\Conf_{\egras_{[i-1]}}).
\end{align}
We then define the (random) time $\tempsarret$ that the algorithm takes to determine the function $f$, meaning
\begin{align}
\label{eq_temps_arret_definition}
\tempsarret (\Conf) = \min\{i\geq 1, \forall \Conf', 
\Conf'_{\egras_{[i]}} = \Conf_{\egras_{[i]}} \Rightarrow f(\Conf)=f(\Conf') \}.
\end{align}
\begin{theorem}[OSSS inequality]
\label{theo_OSSS}
If the function $0\leq f \leq 1$ is increasing, then for all configurations $\hat{\Conf}_{\Lambda^c}$,
\begin{align}
\label{eq_osss}
\text{Var}_{\Specification^{\Intensity, \beta}_{\Lambda,\hat{\Conf}_{\Lambda^c}}}(f)
\leq 2 \underset{e \in E}{\sum}
\delta(e,T)
\text{Cov}_{\Specification^{\Intensity, \beta}_{\Lambda,\hat{\Conf}_{\Lambda^c}}}
(f,\# \Conf_e)
+ \mathcal{O}(\varepsilon^{d}),
\end{align}
where 
$\delta(e,T):= 
\Specification^{\Intensity, \beta}_{\Lambda,\hat{\Conf}_{\Lambda^c}} 
(\exists i \leq \tempsarret, \egras_i=e) $ is called the \emph{revealment} of $e$ and is the probability that the cube $e$ is needed to determine the value of the function $f$.
\end{theorem}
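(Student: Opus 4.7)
The strategy is to reduce the statement to the classical OSSS inequality for product measures via a monotone sampling of $\Specification^{\Intensity,\beta}_{\Lambda,\hat{\Conf}_{\Lambda^c}}$ from a dominating Poisson process. Take $\Conf^\star \sim \Poisson^{\Intensity}_\Lambda$ (which dominates the specification by Proposition \ref{propo_dom_sto_poisson_area}) together with i.i.d.\ uniform marks $(U_x)_{x \in \Conf^\star}$. Fix an enumeration $e_1,\dots,e_t$ of the $\varepsilon$-cubes of $\Lambda$ and define a thinning map $\Psi$ by processing the cubes in order and keeping each Poisson point $x$ of the current cube when $U_x \leq \exp(-\beta h(x,\eta_x))$, where $\eta_x$ is the union of the previously kept points and the boundary condition and $h(x,\eta) := \Leb(B_1(x)\setminus B_1(\eta))$ is the marginal Hamiltonian contribution of $x$. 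The additivity of $\Hamiltonian_\Lambda$ (Remark \ref{remark_additivity_hamiltonian}) and standard Poisson thinning ensure that $\Conf := \Psi(\Conf^\star,U)$ has law $\Specification^{\Intensity,\beta}_{\Lambda,\hat{\Conf}_{\Lambda^c}}$. Since $h(x,\cdot)$ is decreasing in its second argument, an induction along the enumeration shows that $\Psi$ is monotone in $\Conf^\star$ at fixed marks.

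With this coupling in hand, I would lift the tree $T$ determining $f$ to a tree $T^\star$ on the product inputs $X_e := (\Conf^\star_e,(U_x)_{x\in \Conf^\star_e})$, which are i.i.d.\ in $e$: whenever $T$ wants to read $\Conf_e$, the tree $T^\star$ reveals exactly those cubes of $X$ that the sequential thinning $\Psi$ needs to compute $\Conf_e$. Because the kernel $h(x,\cdot)$ has range $2$, these are the cubes within distance $2$ of $e$ preceding $e$ in the enumeration, a family whose combinatorics is controlled uniformly in $\varepsilon$ after the reorganization below. Applying the classical product-measure OSSS inequality to $\tilde f := f\circ\Psi$ then yields
\begin{equation*}
\text{Var}_{\Specification^{\Intensity,\beta}_{\Lambda,\hat{\Conf}_{\Lambda^c}}}(f) = \text{Var}(\tilde f) \leq 2 \sum_{e'} \delta(e',T^\star)\, \text{Inf}_{e'}(\tilde f).
\end{equation*}
Grouping the contributions on the right according to the cube $e$ queried by $T$ that triggered the reveal of $e'$ turns this into an estimate where each $\delta(e,T)$ is multiplied by a sum of influences over a bounded neighbourhood of $e$.

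The final step is to convert this into a bound involving $\text{Cov}_{\Specification^{\Intensity,\beta}_{\Lambda,\hat{\Conf}_{\Lambda^c}}}(f,\#\Conf_e)$. Monotonicity of $\tilde f$ in $\Conf^\star$ at fixed $U$, combined with an FKG/Harris-type argument on the product measure, bounds each $\text{Inf}_{e'}(\tilde f)$ by a constant multiple of $\text{Cov}(\tilde f,\1\{|\Conf^\star_{e'}|\geq 1\})$; pushing this covariance through $\Psi$ and using that $\#\Conf_{e'}$ agrees with $\1\{|\Conf_{e'}|\geq 1\}$ outside an event of probability $O(\varepsilon^{2\Dim})$ per cube recovers the desired Gibbs covariance, with an error that sums to $\mathcal{O}(\varepsilon^\Dim)$ across the $O(\varepsilon^{-\Dim})$ cubes. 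The hardest step will be the honest accounting in this last reduction: one must prove that the reorganization of the tree lift produces only a $\Dim$-dependent multiplicative constant between $\delta(e',T^\star)$ and the relevant $\delta(e,T)$, which is precisely where the finite range and the monotone structure of the area interaction are used; controlling simultaneously the discretization error from replacing counts by Bernoulli indicators is what leaves the harmless $\mathcal{O}(\varepsilon^\Dim)$ remainder of the statement.
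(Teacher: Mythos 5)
There is a genuine gap, in fact two. First, your thinning rule ``keep $x$ when $U_x \leq \exp(-\beta h(x,\eta_x))$ with $\eta_x$ the points already kept'' does not produce the law $\Specification^{\Intensity,\beta}_{\Lambda,\hat{\Conf}_{\Lambda^c}}$. The quantity $\exp(-\beta h(x,\eta))$ is the (normalised) Papangelou intensity, i.e.\ the conditional acceptance probability given the \emph{entire} configuration off $x$; the correct acceptance probability given only the points decided so far must integrate out the not-yet-explored region, and is
$e^{-\beta \Hamiltonian_{\{x\}}(x\cup\Conf)}\,\PartitionFunction(z,\beta,\tilde\Lambda\cap{]x,\infty[},\Conf\cup x)/\PartitionFunction(z,\beta,\tilde\Lambda\cap{]x,\infty[},\Conf)$ as in \cite[Proposition 4.1]{Hofer-temmel_Houdebert_2018} (equation \eqref{eq_thinning} of the paper). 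Without the partition-function ratio the marginal law is wrong (already the probability of the empty configuration does not match). This is repairable, since the corrected kernel is still monotone, but it must be said.

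The second gap is structural and is where the argument breaks. With a \emph{fixed} enumeration, determining $\Conf_e$ through the sequential thinning is not a function of the inputs $X_{e'}$ for $e'$ within distance $2$ of $e$: each accepted point in those cubes depends on accepted points within distance $2$ of \emph{them} in yet earlier cubes, so the set of coordinates your lifted tree $T^\star$ must reveal is a backward cascade through the enumeration, potentially reaching $e_1$. (Even the first layer already contains $O(\varepsilon^{-\Dim})$ cubes.) Consequently the claimed comparison of $\delta(e',T^\star)$ with $\delta(e,T)$ up to a dimensional constant fails, and the product-measure OSSS does not regroup into \eqref{eq_osss}. The paper avoids this by not fixing the enumeration: it uses one fresh dominating marked Poisson configuration $\Conf^{D,U,i}$ per step, samples at step $i$ from the specification on the whole unexplored region $\tilde\Lambda_i$ conditionally on the cubes already filled, and lets the sampling order \emph{be} the random order $\egras$ chosen by the decision tree. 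It then runs the two-copy interpolation of \cite{DuminilCopin_Raoufi_Tassion_2019_Sharpness_perco_RCM} (telescoping $|f(\gamma^0)-f(\gamma^{\tempsarret})|$ over $i\leq\tempsarret$ and using FKG plus the monotonicity of the sampling map), so that the revealment $\delta(e,T)$ appears directly as $\E[\1_{i\leq\tempsarret}\1_{\egras_i=e}]$ rather than through a lift of $T$ to the product space. Your final step (influences $\to$ covariances via FKG, with the $\mathcal{O}(\varepsilon^{2\Dim})$ per-cube error from multiple points) is in the right spirit and matches Lemma \ref{lemme_inegalit�_covariance}, but it cannot be reached along the route you propose.
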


\begin{remark}
The term
$\text{Cov}_{\Specification^{\Intensity, \beta}_{\Lambda,\hat{\Conf}_{\Lambda^c}}}
(f,\# \Conf_e)$
is non-negative because the specification
${\Specification^{\Intensity, \beta}_{\Lambda,\hat{\Conf}_{\Lambda^c}}}$, for all boundary conditions, satisfies the FKG inequality.
This is also a consequence of Theorem 1.1 in \cite{georgii_kuneth}
\end{remark}

\subsubsection{Proof of Theorem \ref{theo_OSSS}}
The original proof of the OSSS inequality, see \cite{O'Donnell_Saks_Schramm_Servedio_2005} or \cite{DuminilCopin_2017_Lecture_Ising_Potts} for more probabilistic version, uses the product structure of the space considered (the Bernoulli percolation model).
But $\Specification^{\Intensity, \beta}_{\Lambda,\hat{\Conf}_{\Lambda^c}}$ is not a product measure and we need a more elaborate method.
For this we will generalize the idea from 
\cite[Lemma 2.1]{DuminilCopin_Raoufi_Tassion_2019_Sharpness_perco_RCM} which sampled a finite family of dependant random variables, one after the other in a random order, using independent uniform variables.

In the continuum setting of point processes this simple idea is much harder to implement.
We will use ideas from the theory of stochastic domination.
Indeed the stochastic domination
$\Specification^{\Intensity, \beta}_{\Lambda,\hat{\Conf}_{\Lambda^c}} 
\preceq \Poisson^{\Intensity}_{\Lambda}$ from Proposition \ref{propo_dom_sto_poisson_area} implies, using Strassen's Theorem, that a configuration 
$\Conf \sim \Specification^{\Intensity, \beta}_{\Lambda,\hat{\Conf}_{\Lambda^c}}$ can be obtained from a dominating Poisson configuration  $\Conf^{D} \sim \Poisson_{\Lambda}^\Intensity$ by a random thinning of the dominating configuration, deciding for each $x \in \Conf^{D}$ if it belongs to the thinned configuration $\Conf$.
We will use an explicit form of the thinning probability proved in \cite{Hofer-temmel_Houdebert_2018}.

To formalize the thinning decision we are adding to each point of the configuration 
$\Conf^{D}$ an independent uniform mark between $0$ and $1$.
The marked configuration is denoted by $\Conf^{D,U}$ and its law is simply a marked Poisson point process.

Even if $\Conf^{D,U}$ is a marked configuration on $\Lambda$, the sampling procedure we will construct below needs as many dominating configurations $\Conf^{D,U}$ as there is cubes $e_i$ in $\Lambda$, meaning $t$.
Let us write $\Conf^{D,U,\otimes}= (\Conf^{D,U,1},\dots, \Conf^{D,U,t})$.
The law of $\Conf^{D,U,\otimes}$ is a product of marked Poisson point processes.

\begin{definition}
\label{defi_ordre_conf_marked}
For two marked configurations $\Conf^{D,U} , \tConf^{D,U}$, we say that 
$\Conf^{D,U}$ is smaller than $\tConf^{D,U}$, writing
$\Conf^{D,U} \leq \tConf^{D,U}$,
if for each $(x,u) \in \Conf^{D,U}$ we have
$(x,u) \in \tConf^{D,U}$.

We write $\Conf^{D,U,\otimes} \leq \tConf^{D,U, \otimes}$ if for all $i$ we have $\Conf^{D,U,i} \leq \tConf^{D,U, i}$
\end{definition}
{ 
\begin{proposition}
\label{propo_construction_area_thinning}
For each $(e_1,\dots, e_t)$, there exists a function $F_{(e_1\dots e_t)}$ indexed by an enumeration of the cubes, such that:
\begin{enumerate}
\item
if $\egras=(\egras_1, \dots ,\egras_t)$ is a random sequence such that for all $i$, 
 $\Conf^{D,U,i}$ is independent of $( \egras_{[i]}, \Conf^{D,U,1 .. i-1})$,
where $\Conf^{D,U,1 .. i-1}$ is a short-hand for the family 
$(\Conf^{D,U,1},\dots,\Conf^{D,U,i-1})$, then 
$$\Conf=F_{\egras}(\Conf^{D,U,\otimes}) \sim 
\Specification^{\Intensity, \beta}_{\Lambda,\hat{\Conf}_{\Lambda^c}},$$ 
where $\Conf^{D,U,\otimes}$ is the random element described just before Definition \ref{defi_ordre_conf_marked}.
\item
For fixed $(e_1\dots e_t)$, the function $F_{(e_1\dots e_t)}$  is increasing with respect to order defined in Definition \ref{defi_ordre_conf_marked}.
\end{enumerate}
\end{proposition}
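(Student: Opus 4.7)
The plan is to build $F_{(e_1,\dots,e_t)}$ as an iterated, cube-by-cube thinning of the dominating marked Poisson configurations, exploiting the stochastic domination $\Specification^{\Intensity,\beta}_{\Lambda,\hat{\Conf}_{\Lambda^c}}\preceq \Poisson^{\Intensity}_\Lambda$ from Proposition \ref{propo_dom_sto_poisson_area} together with the explicit Papangelou-type thinning representation established in \cite{Hofer-temmel_Houdebert_2018}. Concretely, given any ordered enumeration of a dominating Poisson configuration, accepting each marked point $(x,u)$ iff $u\le e^{-\beta\Delta_x \Hamiltonian}$ --- where $\Delta_x \Hamiltonian$ is the area added by the ball $B_1(x)$ to the union of unit balls over the previously accepted points and the boundary configuration --- produces a configuration distributed exactly according to the area specification on the sampling region.

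First I would define $F_{(e_1,\dots,e_t)}$ inductively over cubes: at step $i$, restrict to the marked points of $\Conf^{D,U,i}$ lying in $e_i$, scan them in some fixed intra-cube order (say lexicographic), and accept each according to the rule above, where the history consists of $\hat{\Conf}_{\Lambda^c}$, all previously accepted points of $e_1,\dots,e_{i-1}$, and the already-accepted points of $e_i$. Denoting by $\Conf_{e_i}$ the set of accepted points, set $F_{\egras}(\Conf^{D,U,\otimes}):=\bigcup_{i=1}^t\Conf_{e_i}$. The reason for carrying $t$ independent copies of the marked Poisson process, rather than a single one, is precisely the independence hypothesis placed on $\Conf^{D,U,i}$: when $\egras_i$ is chosen adaptively from the observations already made in $e_1,\dots,e_{i-1}$, a fresh marked configuration is required at step $i$ so that, conditionally on $\egras_i$, the restriction $\Conf^{D,U,i}_{\egras_i}$ still has the law of a marked Poisson point process of intensity $\Intensity$ on $\egras_i$.

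For property~(1), I would argue by induction on $i$. Conditionally on the past $(\egras_{[i]},\Conf_{e_1},\dots,\Conf_{e_{i-1}})$, the cube $\egras_i$ is deterministic, and by the independence assumption $\Conf^{D,U,i}_{\egras_i}$ is a marked Poisson point process of intensity $\Intensity$ on $\egras_i$ independent of that past. The thinning representation applied locally on $\egras_i$ with boundary configuration $\hat{\Conf}_{\Lambda^c}\cup\Conf_{e_1}\cup\cdots\cup\Conf_{e_{i-1}}$ yields that $\Conf_{\egras_i}$ has the conditional distribution $\Specification^{\Intensity,\beta}_{\egras_i,\hat{\Conf}_{\Lambda^c}\cup \Conf_{e_1}\cup\cdots\cup\Conf_{e_{i-1}}}$. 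Iterated application of the compatibility relation \eqref{eq_compatibility_specification} then assembles these cube-wise conditionals into the full specification $\Specification^{\Intensity,\beta}_{\Lambda,\hat{\Conf}_{\Lambda^c}}$.

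For property~(2), with $(e_1,\dots,e_t)$ now deterministic, I would prove by induction on $i$ that $\bigcup_{j\le i}\Conf_{e_j}\subseteq \bigcup_{j\le i}\tConf_{e_j}$ whenever $\Conf^{D,U,\otimes}\le \tConf^{D,U,\otimes}$. The crucial ingredient is the attractive character of the area interaction: enlarging the history configuration can only decrease $\Delta_x\Hamiltonian$, and hence can only increase the acceptance threshold $e^{-\beta\Delta_x\Hamiltonian}$. Sweeping both systems in the same fixed intra-cube order, every marked point $(x,u)$ common to $\Conf^{D,U,i}_{e_i}$ and $\tConf^{D,U,i}_{e_i}$ faces in the tilde system a larger history by induction, hence a larger threshold, so it is accepted in the enlarged system whenever it is accepted in the smaller one, and the extra points of $\tConf^{D,U,i}_{e_i}\setminus \Conf^{D,U,i}_{e_i}$ can only further enlarge the accepted set. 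The most delicate bookkeeping lies exactly here: when an extra point of $\tConf^{D,U,i}_{e_i}$ is inserted (in lexicographic order) before a common point $x$, one must check that this does not retroactively reduce the acceptance threshold of $x$ in the enlarged system; attractiveness is precisely what rules this out, and the induction then propagates cleanly both within a single cube and from one cube to the next.
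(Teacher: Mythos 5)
There is a genuine gap, and it sits at the heart of property~(1): the acceptance rule ``accept $(x,u)$ iff $u\le e^{-\beta\Hamiltonian_{\{x\}}(x\cup\text{history})}$'' does \emph{not} produce the area specification. If you compute the density of the configuration obtained by this naive thinning, the accepted points contribute $z^{\#\Conf}e^{-\beta \Hamiltonian_\Lambda(\Conf\cup\hat\Conf_{\Lambda^c})}$ by additivity, but integrating out the \emph{rejected} points of the dominating Poisson process leaves a residual factor $\prod_y\bigl(1-e^{-\beta\Hamiltonian_{\{y\}}(\cdot)}\bigr)$ that depends nontrivially on the accepted configuration; the result is not proportional to the Gibbs density. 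The correct thinning probability, taken in the paper from Proposition 4.1 of \cite{Hofer-temmel_Houdebert_2018}, is the local-energy factor \emph{multiplied by a ratio of partition functions over the not-yet-scanned region}, see \eqref{eq_thinning}; that ratio is exactly the correction you are missing. The same omission resurfaces in your assembly step: chaining the small-cube specifications $\Specification^{\Intensity,\beta}_{e_i,\hat\Conf_{\Lambda^c}\cup\Conf_{e_{[i-1]}}}$ does not yield $\Specification^{\Intensity,\beta}_{\Lambda,\hat\Conf_{\Lambda^c}}$, because for a non-product measure the conditional law of $\Conf_{e_i}$ given $\Conf_{e_{[i-1]}}$ is the $e_i$-\emph{marginal} of the specification on the whole unexplored region $\tilde\Lambda_i=\Lambda\setminus(\Delta_{e_1}^\varepsilon\cup\dots\cup\Delta_{e_{i-1}}^\varepsilon)$, not the specification on $\Delta_{e_i}^\varepsilon$ alone (compatibility \eqref{eq_compatibility_specification} gives the former, not the latter). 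This is why the paper, at step $i$, samples a configuration on all of $\tilde\Lambda_i$ with the corrected thinning probability and only then discards everything outside $\Delta_{e_i}^\varepsilon$.

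The monotonicity argument is affected accordingly. Attractiveness of the area interaction handles the factor $e^{-\beta\Hamiltonian_{\{x\}}(x\cup\Conf)}$, but you must also show that the partition-function ratio is increasing in $\Conf$. The paper does this by rewriting $p_i(x,\Conf)=\int e^{-\beta\Hamiltonian_{\{x\}}(x\cup\gamma\cup\Conf)}\,\Specification^{\Intensity,\beta}_{\tilde\Lambda_i^x,\Conf}(d\gamma)$ and combining the monotonicity of the integrand with the stochastic domination of Proposition \ref{propo_dom_sto_poisson_area} in the boundary condition; your ``the threshold only goes up'' argument covers the first ingredient but not the second. Your identification of why $t$ independent dominating copies are needed, and the bookkeeping for extra points inserted earlier in the scan order, are both correct, but the proof as written establishes the wrong law in item~(1).
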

\begin{proof}
Let us first construct the function $F_{(e_1\dots e_t)}$.

The configuration $\Conf$ is constructed in each cube $e_i$ one by one.
At each step $i$ the configuration $\Conf_{e_{[i-1]}}$ is already sampled and we are constructing $\Conf_{e_{i}}$.
The construction is taken from 
\cite[Proposition 4.1]{Hofer-temmel_Houdebert_2018} which gives explicitly the thinning probability for sampling a Gibbs point process dominated by a Poisson point process.

We are going to construct first a configuration (which we wrote $\Conf'$) on 
$$\tilde{\Lambda}_i := \Lambda \setminus 
(\Delta_{e_1}^\varepsilon \cup \dots \cup \Delta_{e_{i-1}}^\varepsilon)$$
according to the specification
$\Specification^{\Intensity,\beta}_{\tilde{\Lambda}_i,
\hat{\Conf}_{\Lambda^c}  \cup \Conf_{e_{[i-1]} }}$
and then only keep the points inside $\Delta_{e_i}^\varepsilon$ by setting 
$\Conf_{e_{i}}=\Conf'_{e_i}$.

To construct $\Conf'$ we consider the dominating configuration $\Conf^{D,U,i}_{\tilde{\Lambda}_i}$ restricted to the region where we are constructing the configuration.
Consider on $\tilde{\Lambda}_i$ a lexicographic order, which orders the points $(x,u) \in \Conf^{D,U,i}_{\tilde{\Lambda}_i}$.
The marks play no role in the ordering of the configuration.
The configuration $\Conf'$ is then constructed inductively the following way:
\begin{itemize}
\item at the beginning of the induction 
we set $\Conf'= \emptyset$.
\item
Then we consider each point $(x,u) \in \Conf^{D,U,i}_{\tilde{\Lambda}_i}$ one after the other with respect to the lexicographic order, and  if
\begin{align*}
p_{i} \left( 
x, \Conf' \cup \Conf_{e_{[i-1]}} \cup 
\hat{\Conf}_{\Lambda^c} \right)
\geq u
\end{align*}
we add 
$x$ to the configuration $\Conf'$, i.e. $\Conf'\gets \Conf' \cup \{x\}$.
The function $p_{i}$, whose expression comes from \cite[Proposition 4.1]{Hofer-temmel_Houdebert_2018}, is defined as
\begin{align}
\label{eq_thinning}
p_{i} \left( x,\Conf \right)
=
e^{
- \beta \Hamiltonian_{\{x\}}( x \cup \Conf )
} \times
\frac{\PartitionFunction (z,\beta ,\tilde{\Lambda}_i \cap ]x,\infty[,\Conf \cup x) }
{\PartitionFunction (z,\beta ,\tilde{\Lambda}_i \cap ]x,\infty[,\Conf )},
\end{align}
where the interval $]x,\infty[$ is defined with respect to the lexicographic ordering on $\tilde{\Lambda}_i$, and where $\Conf$ is a configuration such that 
$\Conf \cap \tilde{\Lambda}_i \cap ]x,\infty[=\emptyset$.
\end{itemize}
And finally at the end we set $\Conf_{e_{i}}=\Conf'_{e_i}$.

\begin{remark}
The Proposition 4.1 from \cite{Hofer-temmel_Houdebert_2018}
applies for general models, with random radii marks, for which the Hamiltonian is additive and satisfies a domination assumption.
The additivity of the Hamiltonian was already discussed in Remark \ref{remark_additivity_hamiltonian}, and we already used implicitely the domination condition to derive Proposition \ref{propo_dom_sto_poisson_area}.
Hence by considering constants radii, we can apply Proposition 4.1 from \cite{Hofer-temmel_Houdebert_2018} in our context.
\end{remark}


From the compatibility of the Gibbs specification \eqref{eq_compatibility_specification},
the assumption on $\egras$ and the Proposition 4.1 from \cite{Hofer-temmel_Houdebert_2018}, the sampled configuration has the law of $\Specification^{\Intensity,\beta}_{\Lambda,
\hat{\Conf}_{\Lambda^c}}$.

In order to prove that $F_{(e_1\dots e_t)}$  is increasing, It remains to prove that 
$p_i \left( x,\Conf \right)$ is increasing in $\Conf$.
Let us write $\tilde{\Lambda}_i^x:=
\tilde{\Lambda}_i \cap ]x,\infty[$.
 Using the additivity of the Hamiltonian, see Remark \ref{remark_additivity_hamiltonian},
the function $p_{i}$ can be rewritten as
\begin{align*}
p_{i} \left( x,\Conf \right)
&=
\frac{\int \Intensity^{\# \gamma} 
e^{-\beta \Hamiltonian_{\tilde{\Lambda}_i^x}(\gamma \cup \Conf) } 
e^{-\beta \Hamiltonian_{\{x\}}(x \cup \gamma \cup \Conf)}
\Poisson_{\tilde{\Lambda}_i^x} (d\gamma)}
{\PartitionFunction (z,\beta ,\tilde{\Lambda}_i^x,\Conf )}
\\ & =
\int  
e^{-\beta \Hamiltonian_{\{x\}}(x \cup \gamma \cup \Conf)}
\Specification_{\tilde{\Lambda}_i^x, \Conf}^{\Intensity, \beta} ( d \gamma ).
\end{align*}
Now using the fact that the integrated function is increasing in $\Conf$ with the stochastic domination from Proposition \ref{propo_dom_sto_poisson_area}, we have that the function $p_{i}$ is increasing in $\Conf$.
\end{proof}
}
\begin{remark}
The Proposition \ref{propo_construction_area_thinning} is the main improvement from the theory of randomized algorithm from Duminil-Copin, Raoufi and Tassion
\cite{DuminilCopin_Raoufi_Tassion_2019_Sharpness_perco_Voronoi,DuminilCopin_Raoufi_Tassion_2019_Sharpness_perco_RCM,
DuminilCopin_Raoufi_Tassion_2018_Sharpness_perco_Boolean_Model}.
Considering the assumption on $\egras$, the Proposition \ref{propo_construction_area_thinning} applies in particular when $\egras$ is independent of $\Conf^{D,U, \otimes}$ or when $\egras$ is constructed from $\Conf^{D,U, \otimes}$ as in \eqref{eq_function_prochain_cube}.

The proof of Proposition \ref{propo_construction_area_thinning} relies only 
on the additivity of the Hamiltonian, as well as  the fact that the function 
$\Hamiltonian_{\{ x \}}(x \cup \Conf)$, often called \emph{local energy}, is 
\begin{itemize}
\item uniformly bounded from below, which provides the stochastic domination of 
$\Specification^{\Intensity,\beta}_{\Lambda,{\Conf}_{\Lambda^c} }$ by the Poisson point process $\Poisson^{\Intensity}_{\Lambda}$ and allow us to apply Proposition 4.1 of 
\cite{Hofer-temmel_Houdebert_2018}.

\item decreasing with respect to $\Conf$, which provides the monotonicity of $F_{\egras}$ thanks to \cite[Proposition 4.1]{Hofer-temmel_Houdebert_2018}.

\end{itemize} 
So Proposition \ref{propo_construction_area_thinning}, and more generally Theorem \ref{theo_sharpness_perco_mesure_wired} would trivially generalised to every Gibbs measure whose interaction satisfies those two properties.
While the first property is a standard property satisfied by most interactions considered in the literature,
the second property, related to the \emph{monotony} of the Gibbs specification, is less common.
To the best of our knowledge the area-interaction is the only interaction considered in the literature which satisfies this property.
\end{remark}

Now consider two independent configurations $\Conf^{D,U,\otimes}$ and $\tConf^{D,U,\otimes}$.
The random ordering of cubes $\egras=(\egras_1,\dots, \egras_t)$ considered starting now is constructed from $\Conf^{D,U,\otimes}$ with \eqref{eq_function_prochain_cube}.
We write $\Conf = F_\egras(\Conf^{D,U,\otimes})$ and 
$\tConf = F_\egras(\tConf^{D,U,\otimes})$.
Thanks to Proposition \ref{propo_construction_area_thinning}, those are two realisations of 
$\Specification^{\Intensity, \beta}_{\Lambda,\hat{\Conf}_{\Lambda^c}}$.
 Even if they are both sample from $\egras$ (which depends on $\Conf$), the configurations $\Conf$ and $\tConf$ are independent since $\egras$ intervenes only in the order of the cubes $\Delta_{e_i}$ and the configurations in $\Delta_{e_i}$ are sampled from independent Poisson configurations.

Now write for $i \leq \tempsarret = \tempsarret ( \Conf)$
\begin{align*}
\gamma^i=F_{\egras}(
\tConf^{D,U,1},\dots , \tConf^{D,U,i},\Conf^{D,U,i+1},\dots ,
\Conf^{D,U,\tempsarret}, \tConf^{D,U,\tempsarret +1}, \dots , \tConf^{D,U,t}
).
\end{align*}
Then since $0\leq f \leq 1$ we have
\begin{align*}
\text{Var}_{\Specification^{\Intensity, \beta}_{\Lambda,\hat{\Conf}_{\Lambda^c}}}[f]
\leq
\E_{\Specification^{\Intensity, \beta}_{\Lambda,\hat{\Conf}_{\Lambda^c}}}
[\ |f - 
\E_{\Specification^{\Intensity, \beta}_{\Lambda,\hat{\Conf}_{\Lambda^c}}}[f] |\ ]
=
\E [\ | f(\gamma^0) - \E[f(\gamma^\tempsarret)] |\ ],
\end{align*}
where in the right-hand side the expectation is with respect to the two independent marked Poisson realisations $\Conf^{D,U,\otimes}$ and $\tConf^{D,U,\otimes}$ from which the $\gamma^i$ are constructed.
Then 
\begin{align*}
\text{Var}_{\Specification^{\Intensity, \beta}_{\Lambda,\hat{\Conf}_{\Lambda^c}}}[f]
& \leq 
\E [\ | \E \left[f(\gamma^0)|\Conf^{D,U,\otimes}\right] - 
\E\left[f(\gamma^\tempsarret) | \Conf^{D,U,\otimes}\right] |\ ]
\\ & \leq
\E [\ |  f(\gamma^0)  - f(\gamma^\tempsarret)  |\ ]
\\ & \leq
\sum_{i=1 \dots t}
\E [\ |  f(\gamma^i)  - f(\gamma^{i-1})   |\ \1_{i \leq \tempsarret } ]
\\ & =
\sum_{i=1 \dots t}   \sum_{e \in E}
\E [\E [ |  f(\gamma^i)  - f(\gamma^{i-1})| \ |\ 
\Conf^{D,U,1 .. i-1}  ]
\ \1_{i \leq \tempsarret } \ \1_{\egras_i = e} ],
\end{align*}
where $\Conf^{D,U,1 .. i-1}$ is, as defined before, a short-hand for 
$(\Conf^{D,U,1},\dots,\Conf^{D,U,i-1})$.
\begin{lemma}
\label{lemme_inegalité_covariance}
On the event $\{i \leq \tempsarret \} \cap \{ \egras_i = e \}$ we have
\begin{align*}
\E [ |  f(\gamma^i)  - f(\gamma^{i-1})| \ |\ 
\Conf^{D,U,1 .. i-1} ]
\leq
2 \text{Cov}_{\Specification^{\Intensity, \beta}_{\Lambda,\hat{\Conf}_{\Lambda^c}}}
(f,\# \Conf_e)
+ \mathcal{O}(\varepsilon^{2d}).
\end{align*}
\end{lemma}
Using the Lemma \ref{lemme_inegalité_covariance} we get
\begin{align*}
\text{Var}_{\Specification^{\Intensity, \beta}_{\Lambda,\hat{\Conf}_{\Lambda^c}}}[f]
& \leq 
\sum_{i=1 \dots t}   \sum_{e \in E}
(\
2 \text{Cov}_{\Specification^{\Intensity, \beta}_{\Lambda,\hat{\Conf}_{\Lambda^c}}}
(f,\# \Conf_e)
+ \mathcal{O}(\varepsilon^{2d}) 
\ )
\E [\ \1_{i \leq \tempsarret } \ \1_{\egras_i = e} ]
\\ & =
\sum_{e \in E}
(\
2 \text{Cov}_{\Specification^{\Intensity, \beta}_{\Lambda,\hat{\Conf}_{\Lambda^c}}}
(f,\# \Conf_e)
+ \mathcal{O}(\varepsilon^{2d}) 
\ )
\delta(e,T)
\\ & =
2 \left( \sum_{e \in E}
 \text{Cov}_{\Specification^{\Intensity, \beta}_{\Lambda,\hat{\Conf}_{\Lambda^c}}}
 (f,\# \Conf_e)
\delta(e,T)
\right)
+ \mathcal{O}(\varepsilon^{d}),
\end{align*}
where the last equality uses that the cardinal of $E$ is of order $1/ \varepsilon^\Dim$.
\begin{proof}[Proof of Lemma \ref{lemme_inegalité_covariance}]
Since $f$ is a positive increasing function bounded by one, we have
\begin{align*}
|f(\gamma^i)  - f(\gamma^{i-1})|
&= 
| f(\gamma^i)  - f(\gamma^{i-1}) |
\times
\left( 1 - \1_{\#\gamma^i_e = \# \gamma^{i-1}_e =0 }  \right)
\\ &  \leq
 \left( f(\gamma^i)  - f(\gamma^{i-1}) \right ) \times
\left( \1_{\# \gamma^i_e \geq 1} - \1_{\# \gamma^{i-1}_e \geq 1}  \right)
+
\1_{\# (\gamma^{i}_e \cup \gamma^{i-1}_e) > 1},
\end{align*}
where we used the fact that if 
$\#\gamma^{i-1}_e= \#\gamma^{i}_e=0$, then $ \gamma^{i-1}=\gamma^{i}$.

But using the stochastic domination of Proposition \ref{propo_dom_sto_poisson_area}  we have the following easy bound
\begin{align*}
\E \left[ \1_{\# (\gamma^{i}_e \cup \gamma^{i-1}_e) > 1}
\ |\ \Conf^{D,U,1 .. i-1} \right]
\leq \Poisson^{2\Intensity}(\# \Conf_e >1)
=\mathcal{O}(\varepsilon^{2d}) .
\end{align*}
{ Using the first item of Proposition \ref{propo_construction_area_thinning} we obtain that 
$\gamma^i,\gamma^{i-1} \sim 
\Specification^{\Intensity, \beta}_{\Lambda,\hat{\Conf}_{\Lambda^c}}$ for all 
$ \Conf^{D,U,1 .. i-1} $ (as those variables don't intervene in the definition of 
$\gamma^{i-1}, \gamma^{i}$ but only in $\egras_1, \dots , \egras_i$) and therefore}
\begin{align*}
\E \left[  f(\gamma^{i-1})
\1_{\# \gamma^{i-1}_e \geq 1}
\ |\ \Conf^{D,U,1 .. i-1} \right]
& =
\E \left[  f(\gamma^{i})
\1_{\# \gamma^{i}_e \geq 1}
\ |\ \Conf^{D,U,1 .. i-1} \right]
\\ & =
\E_{\Specification^{\Intensity, \beta}_{\Lambda,\hat{\Conf}_{\Lambda^c}}}
\left[ f(\Conf) \1_{\Conf_e \geq 1} \right].
\end{align*}
{
Now let study the term
$\E \left[  f(\gamma^{i-1}) \1_{\# \gamma^{i}_e \geq 1}
\ |\ \Conf^{D,U,1 .. i-1} \right]$.
One can no longer apply directly the monotonicity of Proposition \ref{propo_construction_area_thinning} since changing $\Conf^{D,U,\otimes}$ 
changes also $\egras$.
But, as in \cite{DuminilCopin_Raoufi_Tassion_2019_Sharpness_perco_RCM}, by conditioning by $\Conf^{D,U,\otimes}$ and applying the FKG inequality
(applied to $\tConf^{D,U,\otimes}$)  we obtain}
\begin{align*}
\E &\left[  f(\gamma^{i-1})
\1_{\# \gamma^{i}_e \geq 1}
\ |\ \Conf^{D,U,1 .. i-1} \right]
\\ & \hspace{1cm}  =
\E \left[  \E \left[
f(\gamma^{i-1})
\1_{\# \gamma^{i}_e \geq 1}
\ |\ \Conf^{D,U,\otimes} \right] 
|\ \Conf^{D,U,1 .. i-1} \right]
\\ & \hspace{1cm}  \geq 
\E \left[  \E \left[
f(\gamma^{i-1})
\ |\ \Conf^{D,U,\otimes} \right] 
  \E \left[
\1_{\# \gamma^{i}_e \geq 1}
\ |\ \Conf^{D,U,\otimes} \right] 
|\ \Conf^{D,U,1 .. i-1} \right]
\\ & \hspace{1cm}  =
\E_{\Specification^{\Intensity, \beta}_{\Lambda,\hat{\Conf}_{\Lambda^c}}}
\left[ f(\Conf)  \right]
\E_{\Specification^{\Intensity, \beta}_{\Lambda,\hat{\Conf}_{\Lambda^c}}}
\left[ \1_{\Conf_e \geq 1} \right],
\end{align*}
where the last equality uses the  measurability  of
$ \E \left[
\1_{\# \gamma^{i}_e \geq 1}
\ |\ \Conf^{D,U,\otimes} \right] $
with respect to $\Conf^{D,U,1 .. i-1}$, and the first item of Proposition \ref{propo_construction_area_thinning}.

The same is true for
\begin{align*}
\E \left[  f(\gamma^{i})
\1_{\# \gamma^{i-1}_e \geq 1}
\ |\ \Conf^{D,U, 1 .. i-1} \right]
& =
\E \left[
\E \left[  f(\gamma^{i})
\1_{\# \gamma^{i-1}_e \geq 1}
\ |\ \Conf^{D,U, 1 .. i} \right]
| \ \Conf^{D,U, 1 .. i-1}
\right]
\\ & \geq 
\E_{\Specification^{\Intensity, \beta}_{\Lambda,\hat{\Conf}_{\Lambda^c}}}
\left[ f(\Conf)  \right]
\E_{\Specification^{\Intensity, \beta}_{\Lambda,\hat{\Conf}_{\Lambda^c}}}
\left[ \1_{\Conf_e \geq 1} \right]
,
\end{align*}
and therefore
\begin{align*}
\E [ |  f(\gamma^i)  - f(\gamma^{i-1})| \ |\ 
\Conf^{D,U,1 .. i-1} ]
& \leq 2 
\text{Cov}_{\Specification^{\Intensity, \beta}_{\Lambda,\hat{\Conf}_{\Lambda^c}}}
(f,\1_{\Conf_e \geq 1})
+ \mathcal{O}(\varepsilon^{2d})
\\ & \leq 2 
\text{Cov}_{\Specification^{\Intensity, \beta}_{\Lambda,\hat{\Conf}_{\Lambda^c}}}
(f,\# \Conf_e)
+ \mathcal{O}(\varepsilon^{2d}),
\end{align*}
where the last inequality coming from the use of the FKG inequality, valid for 
$\Specification^{\Intensity,\beta}_{\Lambda,\hat{\Conf}_{\Lambda}^c}$ as a consequence of Theorem 2.1 in \cite{georgii_kuneth}, 
and applied to $f$ and $\#\Conf_{e} - \1_{\Conf_e \geq 1}$.

The result is proved.
\end{proof}
\subsection{Proof of Theorem \ref{theo_sharpness_perco_mesure_wired}}
\label{section_utilisation_OSSS}
We are now going to prove
Theorem \ref{theo_sharpness_perco_mesure_wired} by applying Theorem \ref{theo_OSSS}.
We need the following classical lemma in the theory of randomised algorithms (see Lemma 3 in \cite{DuminilCopin_Raoufi_Tassion_2019_Sharpness_perco_Voronoi} for instance).
\begin{lemma}
\label{lemme_preuve_sharpness_perco}
Consider a converging  sequence of increasing differentiable functions 
$g_n: ]0,\Intensity_{max}] \to ]0,1]$  satisfying for all $n\geq 1$
\begin{align}
\label{eq_lemma_preuve_shaprness_perco}
g_n'(\Intensity) 
\geq \alpha_2 \frac{n}{\Sigma_n (\Intensity)} g_n(\Intensity),
\end{align}
where $\alpha_2>0$ is a positive constant and 
\begin{align*}
\Sigma_n (\Intensity)
=
\sum_{i=0}^{n-1} g_i(\Intensity).
\end{align*}
Then there exists $\tilde{\Intensity} \in [0,\Intensity_{max}]$ such that:
\begin{itemize}
\item
For every $ \Intensity < \tilde{\Intensity}$, there exists $\alpha_1:=\alpha_1(\Intensity)>0$ such that for all $n$,
\begin{align*}
g_n(\Intensity) \leq \exp(-\alpha_1 n).
\end{align*}
\item
For every $ \Intensity > \tilde{\Intensity}$,
\begin{align*}
g(\Intensity)
:= \lim_{n \to \infty} g_n(\Intensity)
\geq 
\alpha_2  \left( \Intensity - \tilde{\Intensity} \right).
\end{align*}
\end{itemize}
\end{lemma}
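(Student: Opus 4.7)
My plan follows the Duminil-Copin--Raoufi--Tassion dichotomy strategy. Set $\tilde{\Intensity} := \sup\{\Intensity \in (0, \Intensity_{max}] : \Sigma_\infty(\Intensity) < \infty\}$ where $\Sigma_\infty(\Intensity) := \sum_{n\geq 0} g_n(\Intensity)$. Since each $g_n$, and hence $\Sigma_\infty$, is non-decreasing in $\Intensity$, this threshold is well-defined in $[0, \Intensity_{max}]$. The pointwise limit $g(\Intensity) := \lim_n g_n(\Intensity)$ is also non-decreasing.

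\textbf{Subcritical case $\Intensity < \tilde{\Intensity}$.} Rewrite the hypothesis as $(\log g_n)'(s) \geq \alpha_2 n/\Sigma_n(s)$, pick $\Intensity_1 \in (\Intensity, \tilde{\Intensity})$ with $S := \Sigma_\infty(\Intensity_1) < \infty$, and use $\Sigma_n(s) \leq S$ uniformly on $[\Intensity, \Intensity_1]$. Integration yields
\[\log g_n(\Intensity_1) - \log g_n(\Intensity) \geq \frac{\alpha_2 n (\Intensity_1 - \Intensity)}{S},\]
and combined with $g_n(\Intensity_1) \leq 1$ this gives $g_n(\Intensity) \leq \exp(-\alpha_1 n)$ with $\alpha_1 = \alpha_2(\Intensity_1 - \Intensity)/S > 0$, as required.

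\textbf{Supercritical case $\Intensity > \tilde{\Intensity}$.} Here $\Sigma_\infty(s) = \infty$ for $s > \tilde{\Intensity}$. For $\Intensity_0 \in (\tilde{\Intensity}, \Intensity)$ I would integrate the hypothesis in the form
\[g_n(\Intensity) - g_n(\Intensity_0) \geq \alpha_2 \int_{\Intensity_0}^{\Intensity} \frac{g_n(s)}{\Sigma_n(s)/n}\, ds.\]
Since $g_n \to g$ pointwise with $g_n \leq 1$, the Ces\`aro average $\Sigma_n(s)/n$ also converges to $g(s)$, so the integrand tends to $1$ at every $s$ with $g(s) > 0$. Fatou's lemma then gives
\[g(\Intensity) - g(\Intensity_0) \geq \alpha_2 \cdot |\{s \in (\Intensity_0, \Intensity) : g(s) > 0\}|,\]
and letting $\Intensity_0 \downarrow \tilde{\Intensity}$ yields the announced bound $g(\Intensity) \geq \alpha_2(\Intensity - \tilde{\Intensity})$, provided $g$ is strictly positive on $(\tilde{\Intensity}, \Intensity_{max}]$.

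\textbf{Main obstacle.} The subcritical half is a routine one-variable integration; the real work sits in the supercritical half, where one must rule out the possibility that $g(s) = 0$ on part of $(\tilde{\Intensity}, \Intensity_{max}]$, a scenario not automatically excluded by $\Sigma_\infty(s) = \infty$. The cure is the crude bound $\Sigma_n(s) \leq n$, which converts the hypothesis into the weaker differential inequality $g_n'(s) \geq \alpha_2 g_n(s)$; integrating this exponential-growth lower bound from any point where $\limsup_n g_n$ is positive propagates positivity of $g$ rightwards on $(\tilde{\Intensity}, \Intensity_{max}]$, and combining it with the monotonicity of $g$ closes the gap and legitimises the Fatou step above.
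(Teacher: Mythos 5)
Your subcritical half is correct, and with your threshold $\tilde{\Intensity}=\sup\{\Intensity:\Sigma_\infty(\Intensity)<\infty\}$ it is even shorter than the argument in the reference the paper leans on (Duminil-Copin--Raoufi--Tassion, Lemma~3, which defines the threshold via $\limsup_n\log\Sigma_n(\Intensity)/\log n<1$ and needs a two-step bootstrap from stretched-exponential to exponential decay): here finiteness of $\Sigma_\infty$ at a point to the right gives a uniform bound on $\Sigma_n$ outright, and one integration finishes.

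The supercritical half, however, has a genuine gap: the positivity of $g$ on $(\tilde{\Intensity},\Intensity_{max}]$. The Fatou step is sound but only yields $g(\Intensity)\ge\alpha_2\,|\{s\in(\Intensity_0,\Intensity):g(s)>0\}|$, and your proposed cure for the missing positivity is circular. The crude bound $\Sigma_n\le n$ gives $g_n'\ge\alpha_2 g_n$, which propagates positivity to the right \emph{of a point where} $\limsup_n g_n>0$; but the existence of such a point in $(\tilde{\Intensity},s)$ is exactly what is in question. To the right of $\tilde{\Intensity}$ the only available information is $\sum_n g_n(s)=\infty$, and a divergent series of terms tending to $0$ (think $g_n(s)\asymp 1/n$, so $\Sigma_n(s)\asymp\log n$) has $\limsup_n g_n(s)=0$. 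The regime ``$\Sigma_\infty(s)=\infty$ but $\Sigma_n(s)=o(n)$'' is precisely the no-man's-land your dichotomy leaves open, and neither the Fatou step nor the crude bound says anything there. Note also that iterating your one-step integration does not close it: knowing only $\Sigma_n(b)=o(n)$ (with no rate) gives only $g_n(a)=o(1)$ (with no rate) for $a<b$, and this loop never produces summability. What is needed is the quantitative dichotomy of the cited proof: either $\Sigma_n(b)\le n^{1-\delta}$ eventually, in which case $g_n(a)\le\exp(-\alpha_2(b-a)n^{\delta})$ for $a<b$, hence $\Sigma_\infty(a)<\infty$ and $b\le\tilde{\Intensity}$; or $\limsup_n\log\Sigma_n(b)/\log n\ge1$ for every $b>\tilde{\Intensity}$, which feeds the telescoping bound $\sum_{i\le n}g_i/\Sigma_i\ge\log\Sigma_{n+1}-\log\Sigma_1$ applied to $T_n=\frac{1}{\log n}\sum_{i\le n}g_i/i$ and gives the linear lower bound directly --- with positivity of $g$ coming out as a consequence rather than being required as an input. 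As written, your supercritical argument is incomplete.
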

{\begin{remark}
The statement of Lemma \ref{lemme_preuve_sharpness_perco} is slightly different from the original one found in \cite{DuminilCopin_Raoufi_Tassion_2019_Sharpness_perco_RCM} as we added a constant $\alpha_2$ in \eqref{eq_lemma_preuve_shaprness_perco} in order to apply Lemma \ref{lemme_preuve_sharpness_perco} directly to $g_n=\theta_n$.

Furthermore in the original statement of
\cite{DuminilCopin_Raoufi_Tassion_2019_Sharpness_perco_RCM},
the first point is obtained for $n$ large enough.
However since the functions we consider in Lemma \ref{lemme_preuve_sharpness_perco} are bounded by one, the same proof as in 
\cite{DuminilCopin_Raoufi_Tassion_2019_Sharpness_perco_RCM} proves the result for all $n$.

\end{remark}
}
Therefore in order to prove Theorem \ref{theo_sharpness_perco_mesure_wired} it is sufficient to prove that the functions $g_n=\theta_n$ satisfies the assumptions  of Lemma \ref{lemme_preuve_sharpness_perco}.
By construction, the functions $\Intensity \mapsto \theta_n(\Intensity)$ are increasing.
The following lemma proves the differentiability of the functions $\theta_n$.
\begin{lemma}
\label{lemme_derivation_gibbs}
For all functions $f$ measurable and bounded,  and all $\Intensity >0$,
\begin{align*}
\frac{d}{d\Intensity} P^{\Intensity, \beta}_{n,wired} (f)
= 
\frac{1}{\Intensity}
\text{Cov}_{P^{\Intensity, \beta}_{n,wired}}(f,\# \Conf).
\end{align*}
\end{lemma}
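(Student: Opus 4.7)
The plan is to recognise this as a standard ``$z$-derivative produces a covariance with the particle number'' identity for a Gibbs measure in which $z$ plays the role of the activity, and to establish it by a direct differentiation of the explicit density of $P^{z,\beta}_{n,\text{wired}}$ with respect to the Poisson reference measure on the bounded box $\Lambda_n$.

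First I would write
\[
P^{z,\beta}_{n,\text{wired}}(f)
=\frac{N_n(z)}{Z_n(z)},\qquad
N_n(z):=\int_{\ConfSpace} f(\Conf)\,h_z(\Conf)\,\Poisson_{\Lambda_n}(d\Conf),\qquad
Z_n(z):=\int_{\ConfSpace} h_z(\Conf)\,\Poisson_{\Lambda_n}(d\Conf),
\]
with
\[
h_z(\Conf):=z^{\#\Conf}\,\exp\!\bigl(-\beta\,\Leb(B_1(\Conf)\cap \Lambda_{n-1})\bigr).
\]
A pointwise differentiation in $z$ gives $\tfrac{d}{dz}h_z(\Conf)=\tfrac{\#\Conf}{z}\,h_z(\Conf)$, so formally
\[
N_n'(z)=\tfrac{1}{z}\,Z_n(z)\,P^{z,\beta}_{n,\text{wired}}(f\,\#\Conf),\qquad
Z_n'(z)=\tfrac{1}{z}\,Z_n(z)\,P^{z,\beta}_{n,\text{wired}}(\#\Conf),
\]
and the quotient rule then yields
\[
\frac{d}{dz}P^{z,\beta}_{n,\text{wired}}(f)
=\frac{1}{z}\Bigl(P^{z,\beta}_{n,\text{wired}}(f\,\#\Conf)-P^{z,\beta}_{n,\text{wired}}(f)\,P^{z,\beta}_{n,\text{wired}}(\#\Conf)\Bigr)
=\frac{1}{z}\,\mathrm{Cov}_{P^{z,\beta}_{n,\text{wired}}}(f,\#\Conf),
\]
which is the claim.

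The one point that needs care is the legitimacy of differentiating under the integral sign. For this I would fix $z_0>0$ and work on a small interval $[z_0/2,2z_0]$. On this interval $|\tfrac{d}{dz}h_z(\Conf)|\le (2/z_0)\,\#\Conf\cdot(2z_0)^{\#\Conf}e^{0}$ (the exponential factor is bounded by $1$ since $\beta\ge 0$ and the Lebesgue term is non-negative), and $\#\Conf\cdot(2z_0)^{\#\Conf}$ is $\Poisson_{\Lambda_n}$-integrable because $\#\Conf$ is Poisson distributed with parameter $\Leb(\Lambda_n)<\infty$ under $\Poisson_{\Lambda_n}$ and hence has finite exponential moments of every order; $\|f\|_\infty<\infty$ controls the extra factor $f$ in $N_n$. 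Dominated convergence therefore justifies differentiation under the integral for both $N_n$ and $Z_n$, which is the only non-routine step in the argument. The remainder is bookkeeping and the quotient rule.
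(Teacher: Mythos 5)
Your proof is correct and follows essentially the same route as the paper's annex proof: differentiate the explicit density $z^{\#\Conf}h(\Conf)/Z$ in $z$, use the quotient rule (the paper equivalently identifies $Z'/Z$ by setting $f=1$), and obtain the covariance. The only difference is that you spell out the domination justifying differentiation under the integral, where the paper simply invokes a ``standard derivative theorem''; your justification is valid.
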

The proof of this result is done in the annex Section \ref{section_annex}.
Therefore the only remaining task in order to prove Theorem \ref{theo_sharpness_perco_mesure_wired} is to prove that the functions $\theta_n$ satisfy \eqref{eq_lemma_preuve_shaprness_perco}.
This is done using the OSSS inequality \eqref{eq_osss} to the wired measure 
$\mu_n^\Intensity$ for the Boolean function 
$f(\Conf)=\1_{0 \Connected{r}\partial \Lambda_n}(\Conf)$
to the well chosen algorithms from \cite{DuminilCopin_Raoufi_Tassion_2019_Sharpness_perco_Voronoi,
DuminilCopin_Raoufi_Tassion_2018_Sharpness_perco_Boolean_Model}.
\begin{proposition}
\label{propo_revealment_borne}
For $0 \leq s \leq n$ {and $\epsilon$ small}, there exists an algorithm 
$T_s$ { adapted to the event considered } such that 
we have $\delta(e,T)=0$ if 
$e \not \in \Lambda_{n+r+1}$ and otherwise
\begin{align}
\label{eq_revealment_borne}
\delta(e,T_s)
\leq
\mu_n^\Intensity \left(
\Delta_e^\varepsilon \oplus B_r(0) \Connected{r} \partial \Lambda_s 
\right).
\end{align}
\end{proposition}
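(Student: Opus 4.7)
The algorithm $T_s$ I would use is a discrete cluster exploration of the $r$-connected component of $\partial \Lambda_s$ restricted to the bounded window $\Lambda_{n+r+1}$, in the spirit of \cite{DuminilCopin_Raoufi_Tassion_2018_Sharpness_perco_Boolean_Model, DuminilCopin_Raoufi_Tassion_2019_Sharpness_perco_Voronoi}. Initialize $R_0 := \emptyset$ and $C_0 := \partial \Lambda_s$. At step $k+1$, let $\egras_{k+1}$ be the lexicographically smallest unrevealed cube $e \in \Lambda_{n+r+1}$ with $\mathrm{dist}(\Delta_e^\varepsilon, C_k) \leq r$; if no such cube exists, halt. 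Otherwise query $\Conf_{\egras_{k+1}}$, put $R_{k+1} := R_k \cup \{\egras_{k+1}\}$, and update $C_{k+1}$ as the $r$-connected component of $\partial \Lambda_s$ inside $\partial \Lambda_s \cup B_r(\Conf_{R_{k+1}})$. Since $\Lambda_{n+r+1}$ contains only finitely many cubes the algorithm terminates, and by construction $\egras_{k+1}$ is a deterministic function of $(\egras_{[k]}, \Conf_{\egras_{[k]}})$, fitting the formalism \eqref{eq_function_prochain_cube}.

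The second step is to check that the value of $f(\Conf)=\1_{0 \Connected{r} \partial \Lambda_n}(\Conf)$ is known at the halting time $\tempsarret$. By the stopping rule, no unqueried cube in $\Lambda_{n+r+1}$ is within distance $r$ of $C_\tempsarret$, so $C_\tempsarret$ coincides with the $r$-connected component of $\partial \Lambda_s$ in $\partial \Lambda_s \cup B_r(\Conf_{\Lambda_{n+r+1}})$. A short geometric argument shows that for $\varepsilon$ small enough any $r$-path in $B_r(\Conf) \cup \{0\} \cup \partial \Lambda_n$ from $0$ to $\partial \Lambda_n$ uses only points of $\Conf$ lying in $\Lambda_{n+r}$, and each such point lies in a cube entirely contained in $\Lambda_{n+r+1}$. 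Consequently $\{0 \Connected{r} \partial \Lambda_n\}$ is equivalent to $\{0 \in C_\tempsarret\} \cap \{C_\tempsarret \cap \partial \Lambda_n \neq \emptyset\}$, a measurable function of the explored information.

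To bound $\delta(e, T_s)$, first note that if $e \notin \Lambda_{n+r+1}$ the algorithm never queries $e$ by construction, so $\delta(e, T_s) = 0$. Otherwise, if $e$ is queried at some step $k \leq \tempsarret$, then by the selection rule there exists $y \in C_{k-1}$ with $\mathrm{dist}(y, \Delta_e^\varepsilon) \leq r$, i.e.\ $y \in \Delta_e^\varepsilon \oplus B_r(0)$. Since $C_{k-1}$ is contained in the full $r$-cluster of $\partial \Lambda_s$ in $\Conf$, the set $\Delta_e^\varepsilon \oplus B_r(0)$ is $r$-connected to $\partial \Lambda_s$; taking $\mu_n^\Intensity$-probabilities yields \eqref{eq_revealment_borne}.

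The delicate point is the geometric verification in the second paragraph: one has to argue that for $\varepsilon$ small the visited cluster $C_\tempsarret$ captures the full $r$-cluster of $\partial \Lambda_s$ inside the relevant region and that this cluster alone suffices to decide $\{0 \Connected{r} \partial \Lambda_n\}$. This requires the safety margin around $\Lambda_{n+r}$ which is the reason both for the $+1$ in $\Lambda_{n+r+1}$ and for the smallness assumption on $\varepsilon$ in the proposition.
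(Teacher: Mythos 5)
Your proposal is correct and follows essentially the same route as the paper: the same exploration of the $r$-cluster of $\partial\Lambda_s$ within $\Lambda_{n+r+1}$, with the revealment bound read off from the selection rule (a queried cube must lie within distance $r$ of the already-explored cluster of $\partial\Lambda_s$). You in fact spell out two points the paper leaves implicit, namely that the explored cluster determines $f=\1_{0\Connected{r}\partial\Lambda_n}$ because any connecting path must cross $\partial\Lambda_s$, and the role of the safety margin in $\Lambda_{n+r+1}$ for small $\varepsilon$.
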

This proposition uses the now standard algorithms used in \cite{DuminilCopin_2017_Lecture_Ising_Potts,
DuminilCopin_Raoufi_Tassion_2019_Sharpness_perco_Voronoi,
DuminilCopin_Raoufi_Tassion_2019_Sharpness_perco_RCM}.
The proof of Proposition \ref{propo_revealment_borne} is done in the Annex Section \ref{section_annex}.

Using Theorem \ref{theo_OSSS}, summing over $s$ between $0$ and $n-1$ and divinding by $n$  we get
\begin{align*}
\theta_n(z) (1-\theta_n(z) )
& \leq
\frac{2}{n} \sum_{e \in E}
\sum_{s=0}^{n-1} 
\mu_n^\Intensity \left(
\Delta_{e}^\varepsilon \oplus B_r(0) \Connected{r} \partial \Lambda_s 
\right)
\text{Cov}_{\mu_n^\Intensity}(f,\# \Conf_e)
+ \mathcal{O}(\varepsilon^{d})
\\ & \leq
\frac{2}{n} \sum_{e \in E}
\sum_{s=0}^{n-1} 
\mu_n^\Intensity \left(
B_{r+1}(e) \Connected{r} \partial \Lambda_s 
\right)
\text{Cov}_{\mu_{n}^\Intensity}(f,\# \Conf_e)
+ \mathcal{O}(\varepsilon^{d}),
\end{align*}
where the last inequality is valid by considering $\varepsilon$ small enough.
To see if $B_{r+1}(e)$ is $r$-connected to $\partial \Lambda_s$, it is enough to check if at least one point $y$, belonging to a finite collection of points $Y_r^e$, is $r$-connected to $\partial \Lambda_s$. 
Actually the set $Y_r^e$ contains points close to the boundary of $B_{r+1}(e)$ (inside and outside) and also a point in $B_{r+1}(e)\cap\partial \Lambda_s$ if the intersection is non empty. It is easy to see also that the set $Y_r^e$ can be chosen with cardinal $\alpha_r$ depending only on $r$ and $d$.

Therefore
\begin{align}
\label{eq_preuve_sharpness_majoration_variance}
\theta_n(z) (1-\theta_n(z) )
& \leq
\frac{2}{n} \sum_{e \in E}
\sum_{y \in Y_r^e}\sum_{s=0}^{n-1} 
\mu_n^\Intensity \left(
y \Connected{r} \partial \Lambda_s 
\right)
\text{Cov}_{\mu_{n}^\Intensity}(f,\# \Conf_e)
+ \mathcal{O}(\varepsilon^{d}).
\end{align}
But we have
\begin{align*}
\sum_{s=0}^{n-1}  \mu_n^\Intensity 
\left(y  \Connected{r} \partial \Lambda_s \right) 
& \leq
\sum_{s=0}^{n-1} \mu_n^\Intensity 
\left( y \Connected{r} \partial 
\Lambda_{\lfloor   s-||y||  \rfloor }(y) \right)
\\ & \leq
2 \sum_{s=0}^{n-1} \mu_n^\Intensity 
\left( y  \Connected{r} \partial \Lambda_{s}(y) \right)
\\ & \leq
4 \sum_{s=0}^{n/2}  \mu_n^\Intensity 
\left( y  \Connected{r} \partial \Lambda_{s}(y) \right),
\end{align*}
where $\lfloor  s  \rfloor$ stands for the floor function of the absolute value of $s$, and
where $\Lambda_s(y)=y \oplus \Lambda_s$ is the cube translated by the vector $y$.
Now using the stochastic domination between wired measures 
$\mu_n^\Intensity \preceq \mu_s^\Intensity$
from a straightforward extension of Proposition \ref{propo_dom_sto_free_wired_boite}, we finally obtain
\begin{align}
\label{eq_preuve_sharpness_majoration_revealment}
\sum_{s=0}^{n-1}  \mu_n^\Intensity 
\left(y   \Connected{r} \partial \Lambda_s(y) \right) 
& \leq
4 \sum_{s=0}^{n/2}   \mu_s^\Intensity 
\left(0  \Connected{r} \partial \Lambda_{s} \right)
\leq 
4 \Sigma_{n}(\Intensity).
\end{align}
\begin{remark}
Equation \eqref{eq_preuve_sharpness_majoration_revealment} is the only step of the proof of Theorem \ref{theo_sharpness_perco_mesure_wired} where we used the fact that $\mu_n^\Intensity$ is a wired area-interaction measure.

The purpose of summing up to $n/2$ is to ensure that 
$\Lambda_{3s+2}( y ) \subseteq \Lambda_{3n+2}$ and therefore
$\mu_n^\Intensity \preceq \mu_s^\Intensity$.
This is why we considered $\Lambda_{3n+2}$ in the definition on $\mu_n^\Intensity$.
\end{remark}

Hence from \eqref{eq_preuve_sharpness_majoration_variance},  \eqref{eq_preuve_sharpness_majoration_revealment} and Lemma \ref{lemme_derivation_gibbs}, we obtain
\begin{align*}
\theta_n(\Intensity) (1-\theta_n(\Intensity) )
&\leq
8 \alpha_r \frac{\Sigma_{n}(\Intensity)}{n} \sum_{e \in E}
\text{Cov}_{\mu_{n}^\Intensity}(f,\# \Conf_e)
+ \mathcal{O}(\varepsilon^d)
\\ & \leq
8 \alpha_r \frac{\Sigma_{n}(\Intensity)}{n} \text{Cov}_{\mu_{n}^\Intensity}(f,\# \Conf)
+ \mathcal{O}(\varepsilon^d)
\\ & \leq
8 \alpha_r \frac{\Sigma_{n}(\Intensity)}{n} \Intensity \theta_n'(\Intensity)
+ \mathcal{O}(\varepsilon^d).
\end{align*}
Now consider $\Intensity_{max}>  \IntensityThresholdAreaPercolation$.
Then for $\Intensity \le \Intensity_{max}$ we have that $1-\theta_n(\Intensity) \geq c>0$, where $c$ can be chosen uniformly in $n$.
Therefore by letting $\varepsilon$ goes to 0 we obtain

$$ \theta_n'(\Intensity) 
\geq \frac{c}{8 \Intensity_{max} \alpha_r} \frac{n}{\Sigma_n (\Intensity)} \theta_n(\Intensity)
:=
\alpha_2 \ \frac{n}{\Sigma_n (\Intensity)} \theta_n(\Intensity)$$

and equation \eqref{eq_lemma_preuve_shaprness_perco} is fulfilled. From Lemma \ref{lemme_preuve_sharpness_perco} we get the existence of a threshold $\tilde{\Intensity}$ but from the conclusion of Lemma \ref{lemme_preuve_sharpness_perco} this threshold has to be the percolation threshold $\IntensityThresholdAreaPercolation$.
The proof of Theorem \ref{theo_sharpness_perco_mesure_wired} is complete.
\subsection{Proof of proposition \ref{proposition_property_threshold}}
\label{section_preuve_zcbeta_ligget}
We are in this section proving proposition \ref{proposition_property_threshold}, meaning the regularity of the function $\beta\to \IntensityThresholdAreaPercolation(\beta,r)$ and  that for all $r>0$ and $\beta$ large enough (depending on $r$) we have 
$\IntensityThresholdAreaPercolation(\beta,r)=\beta$.
\subsubsection{Explicit value for the threshold}
To prove that 
$\IntensityThresholdAreaPercolation(\beta,r)=\beta$ for $\beta$ large enough, we are going to use the well-known \emph{Fortuin-Kasteleyn} representation of bicolor Widom-Rowlinson measures (and therefore area-interaction measures as well) by \emph{Continuum Random Cluster measures}, defined as follows.
\begin{definition}
A stationary measure $P^{crc}$ on $\ConfSpace$ is a Continuum Random Cluster measure of activity $\Intensity$, if for all bounded $\Lambda \subseteq \R^\Dim$ and all bounded measurable function $f$,
\begin{align}
\label{eq_DLR_CRCM}
\int f \ dP^{crc}
=
\int \int f(\Conf'_{\Lambda} \cup \Conf_{\Lambda^c})
\frac{2^{\NccLambda(\Conf'_{\Lambda} \cup \Conf_{\Lambda^c})}}
{\PartitionFunctionCRCM(\Intensity,\Lambda,\Conf_{\Lambda^c})}
\Poisson_{\Lambda}^{\Intensity}(d \Conf'_{\Lambda})
P^{crc}(d \Conf),
\end{align}
where 
$\NccLambda(\Conf)
= \lim_{\Delta \to \R^\Dim} 
\Ncc (\Conf_{\Delta})-\Ncc (\Conf_{\Delta \setminus \Lambda})$ 
with 
$\Ncc (\Conf_{\Delta})$ counting the number of connected components of
$B_{1/2} (\Conf_{\Delta})$, and
$\PartitionFunctionCRCM(\Intensity,\Lambda,\Conf_{\Lambda^c})$ being the standard non-degenerate partition function.
\end{definition}
{Let us emphasis on the fact that $\NccLambda(\Conf)$ can be negative, which is a major difficulty on the study of the Continuum Random Cluster Model.}
The existence of Random  Cluster measures for every activity $\Intensity$ was proved in \cite{dereudre_houdebert}.
In \cite{Houdebert_2017_percolation_CRCM} the author proves that for any $r>0$ there exists $\IntensityThresholdAreaNonUnicitySym^r$ such that for any 
$\Intensity>\IntensityThresholdAreaNonUnicitySym^r$ every Continuum Random Cluster measures $r$-percolates, which as a consequence gives the non uniqueness of area-interaction measures in the symmetric case for 
$\Intensity=\beta>\IntensityThresholdAreaNonUnicitySym^{1/2}$. 
Indeed the standard \emph{Fortuin-Kasteleyn} representation claims that keeping each finite $1/2$-connected component from a  Continuum Random Cluster measure with probability $1/2$ and   keeping as you want the infinite $1/2$-connected component, this construction produces an area-interaction measure with parameters $\Intensity$, $\beta=\Intensity$ (see \cite{chayes_kotecky,Houdebert_2017_percolation_CRCM}). 
Therefore, as recalled in Proposition \ref{propo_phase_transition_homogeneous_fk}, one area-interaction measure $1/2$-percolates and another one does not $1/2$-percolate. Consequently  $\IntensityThresholdAreaPercolation(\beta,1/2)=\beta$ for $\beta>\IntensityThresholdAreaNonUnicitySym^{1/2}$ and the result is proved for $r=1/2$.

For $r<1/2$, we notice that for $\beta>\IntensityThresholdAreaNonUnicitySym^{r}$ every Continuum Random Cluster measure, with activity $\beta$, $r$-percolates and therefore at least one area-interaction measure with parameters $z=\beta$ and $\beta$, $r$-percolates. That implies that $\IntensityThresholdAreaPercolation(\beta,r)\le \beta$ but by monotonicity $\IntensityThresholdAreaPercolation(\beta,r) \ge \IntensityThresholdAreaPercolation(\beta,1/2)=\beta$ which proves the result for $r<1/2$.

It remains the case  $r>1/2$ which is more delicate. From now let $\Intensity>\IntensityThresholdAreaNonUnicitySym^{1/2}$ and let $P^{crc}$ be a  Continuum Random Cluster measure of activity $\Intensity$, which is therefore $1/2$-percolating by the choice of $\Intensity$.
Consider the measure $P^{thin}$ obtained from $P^{crc}$ by removing all points $x\in\Conf$ belonging to the (unique) infinite component  of $B_{1/2}(\Conf)$.
Then as a consequence of the Fortuin-Kasteleyn representation,  we have the following domination:
\begin{align}
\label{eq_crcm_thinned_area}
P^{\Intensity, \Intensity}_{free} 
\preceq
P^{thin}.
\end{align}
By construction $P^{thin}$ does not $1/2$-percolates, since the $1/2$-infinite connected component was removed.
We are proving in the following lemma that by considering $z$ large enough, this removed infinite connected component prevents $r$-percolation in $P^{thin}$.
\begin{lemma}
\label{lemme_threshold_Un_Perco}
There exists 
$\IntensityThresholdAreaUnPerco\geq\IntensityThresholdAreaNonUnicitySym^{1/2}$
such that for all $\Intensity > \IntensityThresholdAreaUnPerco$, 
$P^{thin}$ does not $r$-percolate.
\end{lemma}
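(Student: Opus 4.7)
The plan is to exploit the density of the infinite $1/2$-connected component of $\Conf$ under $P^{crc}$ at large activity $\Intensity$: I will show that for $\Intensity$ large enough, this infinite component forms, almost surely, arbitrarily large concentric barriers around the origin, so thick that no $r$-edge of the thinned configuration can bridge them. The case $r \leq 1/2$ is immediate since $P^{thin}$ does not $1/2$-percolate by construction, so I focus on $r > 1/2$, and we may even assume $\Intensity \geq \IntensityThresholdAreaNonUnicitySym^{1/2}$ to ensure the $1/2$-infinite cluster exists.

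The first step is purely geometric. Denote by $\Conf^\infty$ the subconfiguration of $\Conf$ belonging to the unique infinite cluster of $B_{1/2}(\Conf)$, and $\Conf^{fin} = \Conf \setminus \Conf^\infty$ the thinned configuration. By definition of a connected component, every $x \in \Conf^{fin}$ satisfies $|x - y| > 1$ for all $y \in \Conf^\infty$, so $\Conf^{fin}$ avoids the $1/2$-neighbourhood of $B_{1/2}(\Conf^\infty)$. Consequently, if an annulus $A = \{x\in \R^\Dim : \rho_1 \leq |x| \leq \rho_2 \}$ of radial width $\rho_2 - \rho_1 \geq 2r$ lies entirely in $B_{1/2}(\Conf^\infty)$, then any two points of $\Conf^{fin}$ on opposite sides of $A$ are at Euclidean distance strictly greater than $2r$, so no $r$-edge crosses $A$.

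The second step produces such annular barriers at all scales. I would proceed by coarse graining: fix $\varepsilon>0$ small compared to $r$ and tile $\R^\Dim$ by cubes of side $\varepsilon$; declare a cube \emph{good} if it is entirely contained in $B_{1/2}(\Conf^\infty)$. As $\Intensity \to \infty$, the intensity of $\Conf$ under $P^{crc}$ tends to infinity (by the CRCM analogue of Proposition~\ref{propo_dom_sto_poisson_area}) while, by the results on CRCM percolation in \cite{Houdebert_2017_percolation_CRCM}, the density of the finite $1/2$-clusters becomes negligible compared to the density of $\Conf^\infty$; hence the probability that any given cube is good tends to $1$. A Peierls-type argument on the coarse lattice, combined with an FKG-based stochastic comparison with a supercritical Bernoulli percolation (valid for $\Intensity$ large enough so that the single-cube probability exceeds the required threshold), then yields the almost-sure existence of concentric circuits of good cubes of width at least $2r$ around the origin at arbitrarily large distances. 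Together with the first step, this forces the $r$-cluster of any thinned point to be almost surely bounded, so $P^{thin}$ does not $r$-percolate, and we may take $\IntensityThresholdAreaUnPerco$ to be the threshold above which the good-cube percolation is supercritical.

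The main obstacle is precisely the second step: quantitative control of the CRCM's infinite $1/2$-cluster at large $\Intensity$. Unlike in the Poisson case, the CRCM has non-trivial long-range correlations and the coarse-grained events are not independent, so the Peierls argument has to be combined with FKG for the CRCM or with an explicit stochastic domination by an independent supercritical site percolation on small boxes. Establishing that the probability of a good cube indeed tends to $1$, and doing so uniformly enough to drive the renormalised percolation into the supercritical regime, is the technical core of the argument.
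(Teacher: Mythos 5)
Your overall strategy --- coarse-grain space, show that cubes covered by the $1/2$-balls of the infinite cluster dominate a highly supercritical Bernoulli site percolation, and conclude that the thinned configuration is confined to the finite complementary components --- is essentially the paper's strategy (the paper tiles with cubes of side $2r$ and concludes from the finiteness of the complementary clusters rather than from explicit concentric circuits; these conclusions are interchangeable). Your geometric first step is also sound. However, you have left unproven exactly the step that carries all the difficulty, and the route you sketch for it does not work as stated. First, you declare a cube good when it is contained in $B_{1/2}(\Conf^{\infty})$, a non-local event that requires controlling the infinite cluster directly; the paper instead declares a cube good when it is contained in $B_{1/2}(\Conf)$ --- a local event --- and recovers membership in the infinite cluster for free, since a covered cube lying in an infinite chain of adjacent covered cubes automatically belongs to an unbounded component of $B_{1/2}(\Conf)$. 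Second, and more seriously, you invoke ``FKG for the CRCM'' to compare with Bernoulli percolation. The continuum random cluster model is not known to satisfy FKG (the paper stresses that $N_{cc}^{\Lambda}$ can be negative, which is precisely what obstructs monotonicity), so that comparison is unavailable; nor does the cited percolation result for the CRCM supply the quantitative input you need. The correct tool is the Liggett--Schonmann--Stacey domination theorem, which requires only a lower bound on the conditional probability that a cube is covered, uniform over the configuration outside a slightly enlarged box.

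That uniform conditional bound is the technical core you acknowledge as missing, and the paper obtains it by a direct computation: subdivide each cube into $k$ subcubes $\tilde{C}_i$ of side smaller than $1/(2\sqrt{\Dim})$, so that the cube is covered as soon as every subcube contains a point of $\Conf$; then bound $P^{crc}(\#\Conf_{\tilde{C}_i}=0 \mid \Conf_{\Lambda_0^c})$ via the DLR equation combined with the deterministic estimate $\#\Conf_{\tilde{C}_i}(1-c_d) \le N_{cc}^{\tilde{C}_i}(\Conf) \le \#\Conf_{\tilde{C}_i}$, where $c_d$ is the kissing number, which gives $P^{crc}(\#\Conf_{\tilde{C}_i}=0 \mid \Conf_{\Lambda_0^c}) \le e^{-2^{1-c_d}\Intensity\,\Leb(\tilde{C}_i)}$ uniformly in the boundary condition. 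Without this (or an equivalent) quantitative estimate the renormalisation cannot be started, so as written your proposal has a genuine gap at its central step.
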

This lemma implies that 
$\IntensityThresholdAreaPercolation(\beta,r)\geq \beta$ for 
$\beta>\IntensityThresholdAreaUnPerco$ and by monotonicity
$\IntensityThresholdAreaPercolation(\beta,r)\leq \IntensityThresholdAreaPercolation(\beta,1/2) = \beta$. That concludes the proof.

\begin{proof}[Proof of Lemma \ref{lemme_threshold_Un_Perco}]
We are divinding the space $\R^\Dim$ into squares 
$C_y:=y\oplus]-r,r]^\Dim$, with $y\in 2r\Z^\Dim$.
We are going to prove that a lot of cubes are filled by balls from $B_{1/2}(\Conf)$, where $\Conf \sim P^{crc}$.
By applying a well-known result from Liggett,Schonmann and Stacey \cite{liggett_schonmann_stacey}, the infinite connected component of $B_{1/2}(\Conf)$ will be (for large activities) very thick.
This will prevent $P^{thin}$ to $r$-percolate.

By stationarity of $P^{crc}$ we are only considering the conditional probability
\begin{align*}
p_\Intensity(\Conf_{\Lambda_0^c})
=
P^{crc}(C_0 \subseteq B_{1/2}(\Conf) | \Conf_{\Lambda_0^c}),
\end{align*}
where $\Lambda_0:=]-r-1,r+1]^\Dim$.
For $C_0$ to be $1/2$-covered, it is sufficent that $\Conf$ has enough nicely placed points.
Therefore we are once again diving $C_0$ into smaller cubes $\tilde{C_i}$, $1\leq i \leq k$, of side smaller than 
$\frac{1}{2\sqrt\Dim}$.
If all those small cubes contains a point then $C_0$ will be $1/2$-covered.
Using the union bound we have
\begin{align*}
1-p_\Intensity(\Conf_{\Lambda_0^c})
\leq \sum_{i=1}^k
P^{crc}(\#\Conf_{\tilde{C_i}} = 0 | \Conf_{\Lambda_0^c}).
\end{align*}
But
\begin{align*}
P^{crc}(&\#\Conf_{\tilde{C_i}} =0 | \Conf_{\Lambda_0^c})
 \\ &=
\int \int
\1_{\#\Conf''_{\tilde{C_i}} =0}
\frac{2^{\Ncc^{\tilde{C}_i}(\Conf''_{\tilde{C_i}}
\cup\Conf'_{\Lambda_0\setminus \tilde{C}_i} \cup \Conf_{\Lambda_0^c})}}
{\PartitionFunctionCRCM \left(\Intensity, \tilde{C_i}, 
\Conf'_{\Lambda_0\setminus \tilde{C}_i} 
\cup \Conf_{\Lambda_0^c})\right)}
\Poisson_{\tilde{C}_i}^{\Intensity}(d\Conf''_{\tilde{C}_i})
P^{crc}(d \Conf'_{\Lambda_0} | \Conf_{\Lambda_0^c})
\\ &=
 \int
\frac{e^{-\Intensity \Leb (\tilde{C}_i) } }
{\PartitionFunctionCRCM \left(\Intensity, \tilde{C_i}, 
\Conf'_{\Lambda_0\setminus \tilde{C}_i} 
\cup \Conf_{\Lambda_0^c})\right)}
P^{crc}(d \Conf'_{\Lambda_0} | \Conf_{\Lambda_0^c}),
\end{align*}
where we used \eqref{eq_DLR_CRCM}, the compatibility of the Continuum Random Cluster specification and the fact that 
$\Ncc^{ \tilde{C}_i }\left( 
\emptyset  \cup
\Conf'_{\Lambda_0\setminus \tilde{C}_i} \cup \Conf_{\Lambda_0^c} \right)=0$.
But we also have the following bound on $\Ncc^{ \tilde{C}_i }(\Conf)$:
\begin{align*}
\# \Conf_{\tilde{C}_i} (1- c_d)
\leq
\Ncc^{ \tilde{C}_i }(\Conf)
\leq\# \Conf_{\tilde{C}_i},
\end{align*}
where $c_d$ is the kissing number in dimension $\Dim$, which is always larger than $2$.
This implies
\begin{align*}
P^{crc}(\#\Conf_{\tilde{C_i}} =0 | \Conf_{\Lambda_0^c})
\leq
e^{-\Intensity \Leb (\tilde{C}_i) } e^{\Intensity  \Leb (\tilde{C}_i)(1-2^{1-c_d}) }
=
e^{-2^{1-c_d}\Intensity  \Leb (\tilde{C}_i) },
\end{align*}
and therefore
\begin{align}
\label{eq_borne_liggett}
p_\Intensity(\Conf_{\Lambda_0^c}) 
\geq
1- k e^{-2^{1-c_d}\Intensity  \Leb (\tilde{C}_i) }.
\end{align}
The bound \eqref{eq_borne_liggett} is uniform in $\Conf_{\Lambda_0^c}$ and goes to $1$ as $\Intensity$ goes to infinity.
Therefore by applying the result of Liggett Schonmann and Stacey \cite{liggett_schonmann_stacey}, we have for $\Intensity$ large enough that the set of completely covered (by balls of radii $1/2$) boxes $C_y$ stochastically dominates an independent and identically  Bernoulli field with parameter $0<p<1$ as large as we want. 
Let $\mathcal{C}^{infinite}$ be the set of sites $y\in 2r \Z^\Dim$ belonging to the infinite connected component.

Now consider the set  
$\mathcal{C}^{finite}= 2r\Z^\Dim \setminus \mathcal{C}^{infinite} $ 
of sites $y\in 2r \Z^\Dim$ not belonging to the infinite cluster of this site percolation.
By considering $\Intensity$ large enough we have that $\mathcal{C}^{finite}$ only contains bounded connected components, with respect to the site percolation.

Now consider the configuration $\Conf^{finite}$ obtained from $\Conf$ by removing the points $x$ in the infinite connected component of $B_{1/2}(\Conf)$.
By construction we have that $B_{1/2}(\Conf^{finite})$ is entirely contained in the cubes $C_y$ for $y \in \mathcal{C}^{finite}$ and the configuration $\Conf^{finite}$ cannot $r$-percolates, since otherwise the infinite connected component would have to cross cubes $y \in \mathcal{C}^{infinite}$, which is not possible.

This implies that for $\Intensity$ large enough the measure $P^{thin}$ does not $r$-percolate.
\end{proof}

\subsubsection{Regularity of the threshold}

In this section we show that the function  $\beta\to \IntensityThresholdAreaPercolation(\beta,r)$ is non-decreasing and Lipschitz. From the previous section there exists $M>1$ such that $\IntensityThresholdAreaPercolation(\beta,r)=\beta$ for $\beta\ge M$. Therefore it is sufficient to show that the function $\beta\mapsto \IntensityThresholdAreaPercolation(\beta, r)$ from $[0,M]$ to $\R$  
is non-decreasing and Lipschitz. To this end we prove that there exists a constant $0<c\le 1$ such that for any $(\Intensity,\beta)$ be in $[0,M]^2$ and $u$ in $[0,1]$ 

\begin{equation}\label{stochregular} 
  P_{free}^{\Intensity,\beta+u} \preceq P_{wired}^{\Intensity,\beta } \quad \text{ and } \quad   P_{free}^{\Intensity,\beta} \preceq P_{wired}^{\Intensity +u,\beta +cu}.
\end{equation}

The expected regularity and monotony for  $\beta\to \IntensityThresholdAreaPercolation(\beta,r)$ follows from the threshold property given in Proposition \ref{propo_threshold_area}. Such stochastic domination  results  \eqref{stochregular} are consequences of uniform properties of the Papangelou intensity of the area-interaction measure defined for every $x\in \R^d$ and $\omega\in \ConfSpace$  by 
\begin{equation}\label{papangelouAREA}
\gamma_{\Intensity,\beta}(x,\Conf)
=
\Intensity e^{-\beta H_{\{x\}}(x \cup \Conf )}
=
\Intensity e^{-\beta \Leb (B_1(x)\setminus B_1(\Conf))}.
\end{equation}
Roughly speaking, this quantity is the quotient of the densities of the process with and without the point $x$. 
We refer to  \cite{georgii_kuneth} for rigorous definitions, interpretations and results on the topic.  By Theorem 1.1 in \cite{georgii_kuneth}, to prove \eqref{stochregular} it is sufficient to show that for any   $(\Intensity,\beta)$ be in $[0,M]^2$, $u$ in $[0,1]$, $x\in \R^d$ and  $\Conf \subset \Conf'$ in $ \ConfSpace$  

$$ \gamma_{\Intensity,\beta}(x,\Conf') \ge  \gamma_{\Intensity,\beta+u}(x,\Conf) \quad \text{ and } \quad \gamma_{\Intensity+u,\beta+cu}(x,\Conf') \ge  \gamma_{\Intensity,\beta}(x,\Conf).$$

The first inequality is obvious from the expression of  $\gamma_{z,\beta}$ in \eqref{papangelouAREA}. For the second inequality we define the constant 

$$c=\min \left(1,\frac{e^{-(M+1) v_d}}{ v_d M}\right)>0,$$ 

where $v_d$ is the volume of the unit ball in $\R^d$. Therefore

\begin{eqnarray*}
\gamma_{\Intensity+u,\beta+cu}(x,\Conf')&=&(\Intensity+u) e^{-(\beta+cu) H_{\{x\}}(\Conf' \cup x)} \\
&\ge & \gamma_{\Intensity,\beta}(x,\Conf')e^{-cu v_d}+ue^{-(M+1)v_d}\\
& \ge &  \gamma_{\Intensity,\beta}(x,\Conf)+ M(e^{-cu v_d}-1)+ue^{-(M+1)v_d}\\
& \ge & \gamma_{\Intensity,\beta}(x,\Conf)+ u\left(e^{-(M+1)v_d}-cM v_d\right)\\
& \ge & \gamma_{\Intensity,\beta}(x,\Conf).
\end{eqnarray*}

\section{Proof of Theorem \ref{theo_unicite_avant_threshold}}
\label{Section_proof_th_unicite}
In this section we are considering 
$\Intensity< \IntensityThresholdAreaPercolation(\beta,1)$, and we want to prove 
$P^{\Intensity, \beta}_{free}
=
P^{\Intensity, \beta}_{wired}$.
The proof relies on a disagreement coupling method.
\begin{definition}
A disagreement coupling 
$ \DisagreementCoupling_{\Lambda,\Conf^1_{\Lambda^c},
\Conf^2_{\Lambda^c}}$ index by a bounded $\Lambda \subseteq \R^\Dim$ and two configurations satisfying
$\Conf^1_{\Lambda^c} \subseteq\Conf^2_{\Lambda^c}$, is a coupling of two marginals, with canonical variables $\xi^1$ and $\xi^2$  satisfying
\begin{subequations}
\label{eq_dcf_def}
\begin{align}
\label{eq_dcf_marginal_gibbs}
\forall 1\le{}i\le{}2:\quad
&\DisagreementCoupling_{\Lambda,\Conf^1_{\Lambda^c},
\Conf^2_{\Lambda^c}}
(\xi^i=d \Conf')
=
\Specification^{\Intensity,\beta}_{\Lambda,\Conf^i_{\Lambda^c}}(d\Conf')
\\ 
\label{eq_dcf_domination}
&\DisagreementCoupling_{\Lambda,\Conf^1_{\Lambda^c},
\Conf^2_{\Lambda^c}}
(\xi^1 \subseteq \xi^2 )
=
1
\\ 
\label{eq_dcf_disagreement_cluster}
&\DisagreementCoupling_{\Lambda,\Conf^1_{\Lambda^c},
\Conf^2_{\Lambda^c}}
\left(
\forall x \in\xi^2 \setminus  \xi^1 | B_1(x) \Connected{B_1(\xi^2)} B_1(\Conf^2_{\Lambda^c}) 
\right)
=
1
\end{align}
\end{subequations}
\end{definition}
\begin{remark}
In the definition of a disagreement coupling from \cite{Hofer-temmel_Houdebert_2018}, there is a third marginal dominating the two first which is a Poisson point process.
But from the monotonicity property of area-interaction measures, see Proposition \ref{propo_dom_sto_poisson_area}, one can only consider two marginals in the coupling.
\end{remark}
\begin{proposition}
{Color{red} 
Assume $\Intensity< \IntensityThresholdAreaPercolation(\beta,1)$.
}
If there exists a disagreement coupling for every $\Lambda$ bounded and every configurations $\Conf^1_{\Lambda^c} \subseteq \Conf^2_{\Lambda^c}$, then $P^{\Intensity, \beta}_{free}
=
P^{\Intensity, \beta}_{wired}$.
\end{proposition}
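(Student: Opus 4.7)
By a standard monotone class argument, it is enough to show that $P^{\Intensity,\beta}_{free}(f) = P^{\Intensity,\beta}_{wired}(f)$ for every bounded measurable $f$ depending only on configurations inside a fixed bounded set $\Delta$; uniqueness of the whole Gibbs measure then follows from the sandwich domination of Proposition \ref{propo_domination_sandwich}. Fix such an $f$ and choose a cube $\Lambda \supseteq \Delta$. By Proposition \ref{propo_domination_sandwich} and Strassen's theorem, let $Q$ be a monotone coupling of $P^{\Intensity,\beta}_{free}$ and $P^{\Intensity,\beta}_{wired}$ with marginals $(\Conf^1,\Conf^2)$ satisfying $\Conf^1 \subseteq \Conf^2$ almost surely. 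Two applications of the DLR equation \eqref{eq_dlr_area_model} then yield
\begin{align*}
P^{\Intensity,\beta}_{wired}(f) - P^{\Intensity,\beta}_{free}(f)
= \iint \bigl( \Specification^{\Intensity,\beta}_{\Lambda,\Conf^2_{\Lambda^c}}(f) - \Specification^{\Intensity,\beta}_{\Lambda,\Conf^1_{\Lambda^c}}(f) \bigr)\, Q(d\Conf^1,d\Conf^2).
\end{align*}

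For each realisation $(\Conf^1_{\Lambda^c},\Conf^2_{\Lambda^c})$ in the support of $Q$, invoke the assumed $\DisagreementCoupling_{\Lambda,\Conf^1_{\Lambda^c},\Conf^2_{\Lambda^c}}$, whose marginals $\xi^1,\xi^2$ are distributed according to $\Specification^{\Intensity,\beta}_{\Lambda,\Conf^1_{\Lambda^c}}$ and $\Specification^{\Intensity,\beta}_{\Lambda,\Conf^2_{\Lambda^c}}$ respectively and satisfy $\xi^1 \subseteq \xi^2$. Since $f$ is $\Delta$-local, the difference $f(\xi^2 \cup \Conf^2_{\Lambda^c}) - f(\xi^1 \cup \Conf^1_{\Lambda^c})$ vanishes unless there is a disagreement point $x \in (\xi^2 \setminus \xi^1) \cap \Delta$; on that event, property \eqref{eq_dcf_disagreement_cluster} forces $B_1(x)$ to be $1$-connected through $B_1(\xi^2)$ to $B_1(\Conf^2_{\Lambda^c})$, which in the combined configuration $\xi^2 \cup \Conf^2_{\Lambda^c}$ provides a $1$-connection from $\Delta$ to $\Lambda^c$. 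Therefore
\begin{align*}
\bigl|\Specification^{\Intensity,\beta}_{\Lambda,\Conf^2_{\Lambda^c}}(f) - \Specification^{\Intensity,\beta}_{\Lambda,\Conf^1_{\Lambda^c}}(f)\bigr|
\leq 2 \|f\|_\infty \cdot \Specification^{\Intensity,\beta}_{\Lambda,\Conf^2_{\Lambda^c}}\bigl( \Delta \Connected{B_1(\xi^2 \cup \Conf^2_{\Lambda^c})} \Lambda^c \bigr).
\end{align*}

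Integrating this bound against $Q$ and using DLR one last time to recombine the boundary condition $\Conf^2_{\Lambda^c}$ drawn from $P^{\Intensity,\beta}_{wired}$ with the inner configuration $\xi^2$, the integrated right-hand side is exactly $2\|f\|_\infty \cdot P^{\Intensity,\beta}_{wired}\bigl( \Delta \Connected{1} \Lambda^c \bigr)$. Letting $\Lambda \nearrow \R^\Dim$, the events $\{\Delta \Connected{1} \Lambda^c\}$ decrease to $\{\Delta \Connected{1} \infty\}$, and the hypothesis $\Intensity < \IntensityThresholdAreaPercolation(\beta,1)$ combined with Proposition \ref{propo_threshold_area} guarantees that $P^{\Intensity,\beta}_{wired}$ does not $1$-percolate, so the limit is zero. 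The main technical obstacle I anticipate is the clean bookkeeping of \emph{which} configuration carries the connecting path: the disagreement property provides a $1$-cluster in $B_1(\xi^2)$ meeting $B_1(\Conf^2_{\Lambda^c})$, and one must phrase the induced connection event in the full configuration $\xi^2 \cup \Conf^2_{\Lambda^c}$ so that DLR re-assembles it into a genuine $P^{\Intensity,\beta}_{wired}$-connection event to which the non-percolation assumption applies; the duality assertion in the statement is then immediate from Proposition \ref{propo_representation_wr_area}.
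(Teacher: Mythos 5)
Your proposal is correct and follows essentially the same route as the paper: apply DLR to both extremal measures, couple the boundary conditions monotonically via Strassen, bound the difference of specifications by the probability of a $1$-connection from the local window to the far boundary using property \eqref{eq_dcf_disagreement_cluster}, and reassemble via DLR into a $P^{\Intensity,\beta}_{wired}$-connection event that vanishes by non-percolation. The only (harmless) slip is your closing remark about duality, which belongs to Theorem \ref{theo_unicite_avant_threshold} rather than to this proposition.
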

\begin{proof}
Let $E$ be an event, that  without loss of generality, only depends on the configurations inside a given bounded $\Lambda$.
Then for $\Lambda \subseteq \Lambda_n$ we have
\begin{align*}
|P^{\Intensity, \beta}_{free}&(E) - P^{\Intensity, \beta}_{wired}(E) |
\\ &\leq  
\int \int 
|\Specification^{\Intensity,\beta}_{\Lambda_n,\Conf^1_{\Lambda_n^c}}(E)-
\Specification^{\Intensity,\beta}_{\Lambda_n,\Conf^2_{\Lambda_n^c}}(E)  |
P^{\Intensity, \beta}_{free}(d \Conf^1)
P^{\Intensity, \beta}_{wired}(d \Conf^2)
\\& = \int \int 
|\Specification^{\Intensity,\beta}_{\Lambda_n,\Conf^1_{\Lambda_n^c}}(E)-
\Specification^{\Intensity,\beta}_{\Lambda_n,\Conf^2_{\Lambda_n^c}}(E)  |
P^{\Intensity, \beta}_{free|\Lambda_{n+2}}(d \Conf^1)
P^{\Intensity, \beta}_{wired|\Lambda_{n+2}}(d \Conf^2),
\end{align*}

where $P^{\Intensity, \beta}_{free|\Lambda_{n+2}}$ (respectively $P^{\Intensity, \beta}_{wired|\Lambda_{n+2}}$) is the restriction of $P^{\Intensity, \beta}_{free}$ (respectively $P^{\Intensity, \beta}_{wired}$) on $\Lambda_{n+2}$. Now by the stochastic domination
$P^{\Intensity, \beta}_{free}
\preceq
P^{\Intensity, \beta}_{wired}$, we have from Strassen's theorem, see for instance 
\cite{lindvall_1999_strassen},  the existence of a thinning probability $\phi^{1}_{\Conf^2}$ such that
\begin{align*}
|&P^{\Intensity, \beta}_{free}(E) - P^{\Intensity, \beta}_{wired}(E) |
\\ &\leq  
\int \sum_{\scriptscriptstyle \Conf^1 \subseteq\Conf^2 } 
|\Specification^{\Intensity,\beta}_{\Lambda_n,\Conf^1_{\Lambda_n^c}}(E)-
\Specification^{\Intensity,\beta}_{\Lambda_n,\Conf^2_{\Lambda_n^c}}(E)  |
\phi^1_{\Conf^2}(\Conf^1)
P^{\Intensity, \beta}_{wired|\Lambda_{n+2}}(d \Conf^2)
\\ & \leq  
\int 
\sum_{\scriptscriptstyle \Conf^1 \subseteq\Conf^2 } 
\DisagreementCoupling_{\Lambda_n,\Conf^1_{\Lambda_n^c},
\Conf^2_{\Lambda_n^c},}
\left(
\Lambda \Connected{B_1(\xi^2)}\Lambda_{n-2}
\right)
\phi^1_{\Conf^2}(\Conf^1)  
P^{\Intensity, \beta}_{wired|\Lambda_{n+2}}(d \Conf^2),
\end{align*}
where the last inequality comes from the existence of the disagreement coupling  and the property \eqref{eq_dcf_disagreement_cluster}.
Therefore
\begin{align*}
| P^{\Intensity, \beta}_{free}(E) - P^{\Intensity, \beta}_{wired}(E) |
 & \leq  
\int 
\Specification^{\Intensity,\beta}_{\Lambda_n,\Conf_{\Lambda_n^c}}
\left(
\Lambda \Connected{B_1(\xi)}\Lambda_{n-2}
\right)
P^{\Intensity, \beta}_{wired|\Lambda_{n+2}}(d \Conf)
\\ & = P^{\Intensity, \beta}_{wired}
\left(
\Lambda \Connected{B_1(\xi)}\Lambda_{n-2}
\right)
\underset{n \to \infty}{\longrightarrow} 0,
\end{align*}
where the convergence is a consequence of
$ \Intensity < \IntensityThresholdAreaPercolation(\beta,1)$.
\end{proof}
It remains to prove the existence of the disagreement coupling.

\begin{proposition}
For each bounded $\Lambda$ and each 
$\Conf^1_{\Lambda^c} \subseteq \Conf^2_{\Lambda^c}$,
there exists a disagreement coupling $\DisagreementCoupling_{\Lambda,\Conf^1_{\Lambda^c},\Conf^2_{\Lambda^c}}$.
\end{proposition}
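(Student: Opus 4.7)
The plan is to construct the disagreement coupling by running in parallel, on one shared source of randomness, the sequential thinning algorithm of Proposition~\ref{propo_construction_area_thinning} with the two boundary conditions. This follows the idea of van den Berg and Maes, transported to the area interaction via the thinning procedure of \cite{Hofer-temmel_Houdebert_2018}. Concretely, I would discretise $\Lambda$ into cubes $(e_1,\ldots,e_t)$ of side length $\varepsilon$ and sample a single marked Poisson tuple $\Conf^{D,U,\otimes}=(\Conf^{D,U,1},\ldots,\Conf^{D,U,t})$ exactly as in Section~\ref{section_OSSS}. Running the thinning construction of Proposition~\ref{propo_construction_area_thinning} twice in parallel on this common randomness -- once with boundary condition $\Conf^1_{\Lambda^c}$, once with $\Conf^2_{\Lambda^c}$ -- produces configurations $\xi^1,\xi^2$ whose marginals are $\Specification^{\Intensity,\beta}_{\Lambda,\Conf^i_{\Lambda^c}}$ by item~(1) of that proposition, giving \eqref{eq_dcf_marginal_gibbs}.

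The inclusion \eqref{eq_dcf_domination} then follows from monotonicity. The thinning probability $p(x,\eta)$ was shown in the proof of Proposition~\ref{propo_construction_area_thinning} to be increasing in $\eta$. Combined with $\Conf^1_{\Lambda^c}\subseteq\Conf^2_{\Lambda^c}$, a straightforward induction on the sequentially visited Poisson points preserves the inclusion of the two growing conditioning configurations, so any uniform mark $u$ that triggers acceptance into $\xi^1$ also triggers acceptance into $\xi^2$.

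The disagreement-cluster property \eqref{eq_dcf_disagreement_cluster} is the heart of the argument and the main obstacle. The heuristic is: if the two growing configurations coincide inside $B_2(x)$ at the moment $x$ is visited, the thinning probabilities at $x$ should be equal, because the local Hamiltonian contribution $H_{\{x\}}(x\cup\zeta)=\Leb(B_1(x)\setminus B_1(\zeta))$ depends only on $\zeta\cap B_2(x)$. A disagreement at $x$ can therefore only be triggered by the presence, within $B_2(x)$, of a point $y$ in the current symmetric difference. Either $y\in\Conf^2_{\Lambda^c}\setminus\Conf^1_{\Lambda^c}$, in which case $B_1(x)\cap B_1(y)\neq\emptyset$ directly links $x$ to $B_1(\Conf^2_{\Lambda^c})$; or $y\in\xi^2\setminus\xi^1$ was processed earlier, and an induction on the sequential order extends the $B_1(\xi^2)$-chain from $y$ through $x$.

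The technical subtlety -- and the main obstacle -- is that $p(x,\eta)$ also involves the partition-function ratio $\PartitionFunction(\Intensity,\beta,\tilde{\Lambda}^x,\eta\cup x)/\PartitionFunction(\Intensity,\beta,\tilde{\Lambda}^x,\eta)$, which is a priori non-local in $\eta$. I would handle this via the representation
\[
p(x,\eta)=\int e^{-\beta H_{\{x\}}(x\cup\gamma\cup\eta)}\,\Specification^{\Intensity,\beta}_{\tilde{\Lambda}^x,\eta}(d\gamma)
\]
already derived in the proof of Proposition~\ref{propo_construction_area_thinning}, and by coupling the residual specifications $\Specification^{\Intensity,\beta}_{\tilde{\Lambda}^x,\eta^1}$ and $\Specification^{\Intensity,\beta}_{\tilde{\Lambda}^x,\eta^2}$ recursively with the same parallel-thinning recipe. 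The inductive hypothesis then collapses the non-local contributions to the $B_2(x)$-neighbourhood and makes the heuristic above rigorous. Finally, letting $\varepsilon\downarrow 0$ and using that all configurations built along the way are dominated by $\Poisson^{\Intensity}_{\Lambda}$ (Proposition~\ref{propo_dom_sto_poisson_area}) delivers the continuum disagreement coupling on $\Lambda$.
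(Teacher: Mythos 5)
Your construction delivers \eqref{eq_dcf_marginal_gibbs} and \eqref{eq_dcf_domination}, but the key property \eqref{eq_dcf_disagreement_cluster} is not established, and as literally described your coupling does not satisfy it. The obstruction is exactly the non-locality you flag: the acceptance probability $p_i(x,\eta)$ contains the ratio $\PartitionFunction(z,\beta,\tilde{\Lambda}_i^x,\eta\cup x)/\PartitionFunction(z,\beta,\tilde{\Lambda}_i^x,\eta)$, which depends on $\eta$ through the entire unexplored region, not only through $\eta\cap B_2(x)$. Hence, if the cubes are visited in a fixed enumeration $(e_1,\dots,e_t)$, a cube deep inside $\Lambda$, far from $\Conf^2_{\Lambda^c}$ and from any previously revealed disagreement, can be processed while the two conditioning configurations already differ far away; the two acceptance probabilities then differ, and the shared uniform mark can fall between them, creating an isolated point of $\xi^2\setminus\xi^1$ with no $B_1(\xi^2)$-path to $B_1(\Conf^2_{\Lambda^c})$. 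The sentence ``the inductive hypothesis then collapses the non-local contributions to the $B_2(x)$-neighbourhood'' is precisely the assertion that needs to be proved, and it requires (at least) that the exploration order be adapted so that one only ever decides points within distance $2$ of the already-revealed disagreement region or of $\Conf^2_{\Lambda^c}$, with a genuinely recursive re-coupling of the residual specifications elsewhere. That bookkeeping is the whole content of the van den Berg--Maes argument and is absent from your sketch.

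The paper avoids the point-by-point thinning altogether. It sets $\Gamma=\{x\in\Lambda:\ \|x;\Conf^2_{\Lambda^c}\|\le 2\}$, samples $\xi^2\sim\Specification^{\Intensity,\beta}_{\Lambda,\Conf^2_{\Lambda^c}}$ and $\xi^1$ as a monotone (Strassen) thinning of $\xi^2$ using $\Specification^{\Intensity,\beta}_{\Lambda,\Conf^1_{\Lambda^c}}\preceq\Specification^{\Intensity,\beta}_{\Lambda,\Conf^2_{\Lambda^c}}$, keeps only the points in $\Gamma$, and recurses on $\Lambda\cap\Gamma^c$ with the enlarged boundary conditions; when $\Gamma=\emptyset$ the two specifications coincide and one sets $\xi^1=\xi^2$. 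Every point retained before the terminal step lies within distance $2$ of the previous boundary condition, so \emph{every} such point (in particular every disagreement point) is $B_1(\xi^2)$-connected to $B_1(\Conf^2_{\Lambda^c})$ by induction, and the non-locality of the thinning kernel is irrelevant because compatibility of the specification is invoked zone by zone rather than point by point. Termination is guaranteed since the sampled configuration in $\Gamma$ is empty with probability bounded below, so the number of rounds is dominated by a geometric variable. If you want to salvage your route, you must replace the fixed cube enumeration by this kind of boundary-adapted exploration; otherwise the argument for \eqref{eq_dcf_disagreement_cluster} fails.
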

The construction of the disagreement coupling is a generalisation of the one made in \cite{Hofer-temmel_Houdebert_2018}, where the dominating measure is a Poisson point process.
The coupling is sampled starting from the balls close to the boundary of $\Lambda$, and going inductively inside $\Lambda$.
\begin{proof}
The coupling is constructed inductively.
Recall that $\Lambda \subseteq \R^\Dim$ is bounded and that 
$\Conf^1_{\Lambda^c} \subseteq \Conf^2_{\Lambda^c}$.
Define the \emph{disagreement zone}
\begin{align*}
\Gamma = \{
x \in \Lambda, ||x; \Conf^2_{\Lambda^c}||  \leq 2\}
\end{align*}
as the region where a point $x$ of the point process would be directly $1-connected$ to the  boundary condition $\Conf^2_{\Lambda^c}$ (i.e. the ball $B_1(x)$ would overlap $B_1(y)$ for at least one $y\in\Conf^2_{\Lambda^c}$).

The induction will be made with respect to the disagreement zone $\Gamma$ in the following way.
\begin{itemize}
\item 
If $\Gamma \not = \emptyset$, let us first sample 
$\xi^2 \sim \Specification^{\Intensity,\beta}_{\Lambda,\Conf^2_{\Lambda^c}}$.
We are then sampling $ \xi^1 \sim
\Specification^{\Intensity,\beta}_{\Lambda,\Conf^1_{\Lambda^c}}$ as a thinning of 
$\xi^2$.

This procedure is possible, since the condition
$\Conf^1_{\Lambda^c} \subseteq \Conf^2_{\Lambda^c}$ implies, thanks to Proposition \ref{propo_dom_sto_poisson_area}, the following domination:
$$
\Specification^{\Intensity,\beta}_{\Lambda,\Conf^1_{\Lambda^c}}
\preceq
\Specification^{\Intensity,\beta}_{\Lambda,\Conf^2_{\Lambda^c}}.
$$
From $\xi^1$ and $\xi^2$ we are only keeping the points inside $\Gamma$.
The induction then goes on with $\Lambda \leftarrow \Lambda \cap \Gamma^c$ with the new boundary conditions
$\Conf^1_{\Lambda^c \cup \Gamma} = \Conf^1_{\Lambda^c} \cup \xi^1_{\Gamma}$ and
$\Conf^2_{\Lambda^c \cup \Gamma} = \Conf^2_{\Lambda^c} \cup \xi^2_{\Gamma}$.
\item
If $\Gamma= \emptyset$.
This is the terminal step of the induction.
In this case we have 
$\Specification^{\Intensity,\beta}_{\Lambda,\Conf^1_{\Lambda^c}}
=
\Specification^{\Intensity,\beta}_{\Lambda,\Conf^2_{\Lambda^c}}
=
\Specification^{\Intensity,\beta}_{\Lambda,\emptyset}$.
Therefore we simply sample 
$\xi^1= \xi^2 \sim \Specification^{\Intensity,\beta}_{\Lambda,\emptyset}$.
\end{itemize}

It is easy to see that the induction terminates almost surely.
Indeed if at one step the sampled configuration $\xi^2$ is empty (which happens with positive bounded from below probability) then at the following step we will have $\Gamma= \emptyset$.
Therefore the number of steps is dominated by a geometric random variable, which is almost surely finite.

Finally the construction ensures that all properties of \eqref{eq_dcf_def} are fullfilled.
\end{proof}

\section{Annex} \label{section_annex}
\subsection{Proof of Proposition \ref{propo_revealment_borne} }
Let us define the algorithm $T_s$ which explores the $r$-connected components of $\partial \Lambda_s$.
\begin{definition}[Definition of the algorithm]
During the first $i$ steps, the configuration inside the cubes $\egras_1, \dots, \egras_i$ have been explored, and we write $Z_i= B_r(\Conf_{\egras_{[i]}})$ for the region known to be covered by $r$-balls. 
Remark that $Z_i$ might not be entirely connected (in $Z_i$) to $\partial \Lambda_s$, and wrote $Z_i^s$ for the subregion of $Z_i$ of points connected (in $Z_i$) to $\partial \Lambda_s$.


 At the step $i+1$ we take a cube $\egras_{i+1} \in\varepsilon (\Z+1/2) ^\Dim \cap \Lambda_{n+r+1}\setminus \egras_{[i]} $ such that
 $$ || \Delta_{\egras_{i+1}}^\varepsilon ; Z_i^s\cup \partial \Lambda_s || \leq r. $$
If no such $\egras_{i+1}$ exists, then the algorithm stops as the connected components of 
$\partial \Lambda_s$ have been entirely explored.
If there is several $\egras_{i+1}$ satisfying the condition, we choose one using a deterministic (but not important) rule.

We then look at the configuration inside the chosen cube $\egras_{i+1}$ and set 
$Z_{i+1}= Z_i \cup B_r(\Conf_{\egras_{i+1}})=
B_r(\Conf_{\egras_{[i+1]}})$.
\end{definition}
The bound \eqref{eq_revealment_borne} from Proposition \ref{propo_revealment_borne} follows directly from the definition of the algorithms.
\subsection{Proof of Lemma \ref{lemme_derivation_gibbs} }
We have 
$ P^{\Intensity, \beta}_{n,wired} (f) 
= \int
f(\Conf) \frac{z^{\#\Conf } 
h(\Conf)}
{\PartitionFunction(z, \beta,n,wired)} 
\Poisson_{\Lambda_n}(d\Conf)
$, where 
$$ h(\Conf):=
e^{-\beta \Leb(B(\Conf_{\Lambda_n} ,1) \cap \Lambda_{n-1} ) }.$$
Using a standard derivative theorem we obtain
\begin{align}
\label{eq_preuve_deriv_covariance_1}
\frac{d}{d\Intensity} P^{\Intensity, \beta}_{n,wired} (f)
=
\frac{1}{\Intensity} P^{\Intensity, \beta}_{n,wired} (f \times \#)
-
\frac{\frac{d}{d \Intensity}\PartitionFunction(z, \beta,n,wired)}
{\PartitionFunction(z, \beta,n,wired)}
P^{\Intensity, \beta}_{n,wired} (f).
\end{align}
Taking $f=1$ in \eqref{eq_preuve_deriv_covariance_1} yields
\begin{align*}
0 =
\frac{1}{\Intensity}
P^{\Intensity, \beta}_{n,wired} ( \#  )
-
\frac{\frac{d}{d \Intensity}\PartitionFunction(z, \beta,n,wired)}
{\PartitionFunction(z, \beta,n,wired)},
\end{align*}
which transforms \eqref{eq_preuve_deriv_covariance_1} into
\begin{align*}
\frac{d}{d\Intensity} P^{\Intensity, \beta}_{n,wired} (f)
=
\frac{1}{\Intensity} \left(
P^{\Intensity, \beta}_{n,wired} ( f \times \#  )
-
P^{\Intensity, \beta}_{n,wired} (  \#  )
P^{\Intensity, \beta}_{n,wired} (f)
\right),
\end{align*}
proving the result.

\vspace{1cm}
{\it Acknowledgement:} This work was supported in part by the Labex CEMPI  (ANR-11-LABX-0007-01), the GDR 3477 Geosto, the ANR project PPPP (ANR-16-CE40-0016), the ANR project MALIN (ANR-16-CE93-0003) and the 
Deutsche Forschungsgemeinschaft (DFG) - SFB1294/1 - 318763901.

\bibliographystyle{plain}
\bibliography{biblio}
\end{document}